\definecolor{purple}{RGB}{100, 0, 200}
\newcommand{\px}{\partial_x}
\newcommand{\eq}[1]{\begin{equation}
\begin{split}
#1
\end{split}
\end{equation}}
\newcommand{\eqh}[1]{\begin{equation*}
\begin{split}
#1
\end{split}
\end{equation*}}
\newcommand{\lr}[1]{\left( #1 \right)}
\newcommand{\pt}{\partial_t}
\newcommand{\re}{\rho_\ep}
\newcommand{\ue}{u_\ep}
\newcommand{\we}{w_\ep}
\newcommand{\pe}{p_\ep}
\newcommand{\vpe}{\varphi_\ep}
\newcommand{\T}{\mathbb{T}}
\theoremstyle{plain} 
\newtheorem{thm}{Theorem}[section]
\newtheorem{prop}[thm]{Proposition}
\newtheorem{lem}[thm]{Lemma} 
\newtheorem{cor}[thm]{Corollary} 
\theoremstyle{definition}
\theoremstyle{remark} 
\newtheorem{rmk}[thm]{Remark}
\DeclareMathOperator*{\esssup}{ess\,sup}
\newcommand{\R}{\mathbb{R}}
\newcommand{\ep}{\varepsilon}
\newcommand{\tV}{\widetilde{V}}
\newcommand{\urho}{\overline{\rho}}
\newcommand{\lrho}{\underline{\rho}}
\newcommand{\red}[1]{\textcolor{red}{#1}}
\title{Hard congestion limit of the dissipative Aw-Rascle system}
\author{N. Chaudhuri\footnote{Imperial College London, London, United Kingdom; n.chaudhuri@imperial.ac.uk},\;
L. Navoret\footnote{University of Strasbourg, Strasbourg Cedex, France; laurent.navoret@math.unistra.fr},\;
C. Perrin\footnote{Aix Marseille Univ, CNRS, I2M, Marseille, France; charlotte.perrin@cnrs.fr},\; E. Zatorska\footnote{Imperial College London, London, United Kingdom; e.zatorska@imperial.ac.uk}
}
\begin{document}

\maketitle

\begin{small}
\begin{abstract}
In this study, we analyse the famous Aw-Rascle system in which  the difference between the actual and the desired velocities (the offset function) is a gradient of a singular function of the density. This leads to a dissipation in the momentum equation which vanishes when the density is zero. The resulting system of PDEs can be used to model  traffic or suspension flows in one dimension with the maximal packing constraint taken into account.
After proving the global existence of smooth solutions, we study the so-called ``\emph{hard congestion limit}'', and show the convergence of a subsequence of solutions towards a weak solution of an hybrid free-congested system.
In the context of suspension flows, this limit can be seen as the transition from a suspension regime, driven by lubrication forces, towards a granular regime, driven by the contact between the grains.
\end{abstract}

\bigskip
\noindent{\bf Keywords:} Aw-Rascle system, suspension flows, maximal packing, weak solutions.
	
\medskip
\noindent{\bf MSC:} 35Q35, 35B25, 76T20, 90B20.
\end{small}

\section{Introduction}
The purpose of this work is to study a singular limit $\ep\to 0$ for the following generalization of the Aw-Rascle \cite{AR} and Zhang \cite{Zhang} system 
\begin{subnumcases}{\label{eq:AW-0}}
\partial_t \rho_\ep + \partial_x(\rho_\ep u_\ep) = 0, \label{eq:AW-0_cont}\\
\partial_t(\rho_\ep w_\ep) + \partial_x(\rho_\ep u_\ep w_\ep) = 0,\label{eq:AW-0_mom}
\end{subnumcases}
on one-dimensional periodic domain $\Omega=\T$. The unknowns of the system are the density $\re$ and the velocity of motion $u_\ep$. The quantity $w_\ep$ denotes the desired velocity of motion and it differs from the actual velocity $u_\ep$ by the offset function. This function describes the cost of moving in certain direction and it depends on the congestion of the flow.  In our case the offset function is equal to the gradient of $\pe=\pe(\re)$, more precisely:
\eq{\label{def:w}
\we=\ue+\px \pe(\re),}
where
\eq{\label{offset}
p_\ep(\rho_\ep) = \ep\dfrac{\rho_\ep^\gamma}{(1-\rho_\ep)^\beta},\quad \gamma \geq 0, \quad \beta > 1.}
This singular function plays formally the role of a barrier by preventing the density to exceed the maximal fixed threshold $\bar \rho \equiv 1$.
The motivation to study this model and it's asymptotic limit $\ep\to 0$ comes mainly from two areas of applications:

{\bf{The Aw-Rascle model for traffic \cite{AR}.}}  The system \eqref{eq:AW-0} with scalar offset function, i.e. with $w_\ep=w=u +\rho^\gamma$ for $\gamma>1$ has been derived from the Follow the Leader (FTL) microscopic model of one lane vehicular traffic  in \cite{AwKlar}. The drawback of that model is that  the offset function $\rho^\gamma$, does not preserve the maximal density constraint, i.e. solutions satisfy the maximal density constraint $\rho^0\leq \bar\rho$ initially but evolve in finite time to a state which violates this constraint.  Moreover, the velocity offset should be very small unless the density $\rho$ is very close to the maximal value, $\bar\rho=1$. 
Indeed, the drivers do not reduce their speed significantly if the traffic is not congested enough. 
To incorporate these features  the authors of \cite{BDDR} proposed to work with the asymptotic limit ($\ep\to0$) of \eqref{eq:AW-0} with $w_\ep=u_\ep+p_\ep(\re)$, and a singular scalar offset function $p_\ep$ given by \eqref{offset}. The singular Aw-Rascle system and its asymptotic limit $\ep \to 0$ has been studied numerically in~\cite{berthelin2017}, and derived from a FTL approximation in~\cite{berthelin2017FTL}.
To be able to use this model in the multi-dimensional setting, where velocity and offset function should have the same physical dimension, a possible way would be to take for the offset a gradient rather than a scalar function (nevertheless, see the recent paper~\cite{berthelin2022} for a proposition and analysis of a multi-d extension of the classical Aw-Rascle model). 
The use of a gradient can be interpreted as ability of the driver to relax their velocity to an average of the speed of the front and the rear vehicles, weighted according to the local density. So, unlike in the classical Aw-Rascle model, both front and rear interactions would have to be incorporated at the level of the particle model. This seems to be a reasonable assumption for interactions between vehicles that can change lanes and overtake each others.

{\bf{The lubrication model.}} Equations (\ref{eq:AW-0}-\ref{offset}) appear also in modeling of suspension flows, i.e. flows of grains suspended in a viscous fluid. To explain this context better, note that system \eqref{eq:AW-0} with $w_\ep$ given by \eqref{def:w} can be rewritten (formally) as the pressureless compressible Navier-Stokes equations with density dependent viscosity coefficient 
\begin{subnumcases}{\label{eq:AW-1}}
\partial_t \rho_\ep + \partial_x(\rho_\ep u_\ep) = 0, {\label{eq:AW-1_cont}}\\
\partial_t(\rho_\ep u_\ep) + \partial_x(\rho_\ep u_\ep^2) - \partial_x\left( \lambda_\ep(\re) \partial_x u_\ep\right) = 0,{\label{eq:AW-1_mom}}
\end{subnumcases}
where
\eq{\label{lambda}
\lambda_\ep(\re)=\re^2 p'_\ep(\rho).}
In system \eqref{eq:AW-1} the singular diffusion coefficient $\lambda_\ep(\rho_\ep)$ represents the repulsive lubrication forces and $\ep$ is linked to the viscosity of the interstitial fluid. 
The previous system has been rigorously derived from a microscopic approximation in~\cite{LM}.
The limit $\ep \to 0$ models the transition between the suspension regime, dominated by the lubrication forces, towards the granular regime dictated by the contacts between the solid grains.

\bigskip

Formally, performing the limit  $\ep\to0$ in \eqref{eq:AW-0} (or equivalently in \eqref{eq:AW-1}), we expect to get the solution $(\rho,u)$ of the  compressible  pressureless Euler system, at least when $\rho<1$. 
In the region where $\rho=0$ we expect that the singularity of the offset function  \eqref{offset} (equivalently the singularity of the viscosity coefficient \eqref{lambda}) will prevail giving rise to additional  forcing term. The limiting equations then read
\begin{subnumcases}{\label{eq:limit}}
\partial_t \rho + \partial_x (\rho u) = 0 \label{eq:limit-cont}\\
\partial_t (\rho u + \partial_x \pi) + \partial_x \big( (\rho u + \partial_x \pi) u\big) = 0 \label{eq:limit-mom} \\
0 \leq \rho \leq 1, \quad (1-\rho)\pi = 0, \quad \pi \geq 0,
\end{subnumcases}
where  $\pi$ is the additional unknown obtained as a limit of certain singular function of $\re$, that will be specified later on.  This limiting system has been derived formally before in the papers of Lefebvre-Lepot and Maury \cite{LM} and then  the Lagrangian solutions based on an explicit formula using the monotone rearrangement associated to the density were constructed by Perrin and Westdickenberg~\cite{PW}.    
As explained in this latter work, the previous system can be related to the constrained Euler equations, studied for instance by Berthelin in~\cite{berthelin2002} or Preux and Maury~\cite{maury2017}, by splitting the momentum equation~\eqref{eq:limit-mom} as follows:
\begin{equation}\label{eq:Euler-FC}
\begin{cases}
\partial_t (\rho u) + \partial_x(\rho u^2) + \partial_x P = 0, \\
\partial_t \pi + u \partial_x \pi = - P.
\end{cases}
\end{equation}
In~\cite{berthelin2002}, the constrained Euler equations are obtained through the \emph{sticky blocks} approximation, while in~\cite{degond2011} and \cite{BP} the system is approximated by the compressible Euler equations with singular pressure $P_\ep = P_\ep(\rho_\ep)$.

Similar asymptotic limit passage $\ep\to0$ was analysed in the multi-dimensional setting by Bresch, Necasova and Perrin \cite{BNP} in the case of heterogeneous fluids flows described by compressible Birkmann equations with singular pressure and bulk viscosity coefficient. 
The full compressible Navier-Stokes with exponentially singular viscosity coefficients and pressure was considered by Perrin in \cite{P2016,P2017}. 
The asymptotic limit when $\ep\to0$ in the singular pressure term  that leads to the two-phase compressible/incompressible Navier-Stokes equations was considered even earlier in the context of crowd dynamics, see Bresch, Perrin and Zatorska \cite{BPZ}, Perrin and Zatorska \cite{PZ}, Degond, Minakowski and Zatorska \cite{DMZ}, Degond et al. \cite{DMNZ}. Moreover, an interested reader can also consult Lions and Masmoudi~\cite{PlLM} and Vauchelet and Zatorska \cite{VZ} for different approximation of the two-phase system.
For an overview of results and discussion of models described by the free/congested two-phase flows we refer to \cite{perrin2018}.

\bigskip
Our paper contains two main results related respectively to the existence of solutions to~\eqref{eq:AW-0} at $\ep$ fixed, and the convergence as $\ep \to 0$ of the solutions towards a solution of the limit system~\eqref{eq:limit}. 
Let us be a little bit more precise about the framework and the difficulties associated to system~\eqref{eq:AW-0}.

To study~\eqref{eq:AW-0} at $\ep$ fixed, we take advantage of the reformulation of the system as (pressureless) Navier-Stokes equations with a density dependent viscosity, namely Eq.~\eqref{eq:AW-1}.
It is now well-known that, in addition to the classical energy estimate which provides a control of $\sqrt{\lambda_\ep} \partial_x u_\ep$, the BD entropy (BD for Bresch and Desjardins~\cite{bresch2006}) yields a control of the gradient of a function of the density $\rho_\ep$.
We will show in this paper, that this estimate is precisely the key ingredient to ensure the maximal density constraint, namely we will show that $\|\rho_\ep\|_{L^\infty_{t,x}} \leq C_\ep$ for some constant $C_\ep < 1$ tending to $1$ as $\ep \to 0$. \\
As $\ep \to 0$, a main issue which is common to the analysis of Navier-Stokes equations with (degenerate close to vacuum) density dependent viscosities, is the fact that, a-priori, we do not have any uniform control in $L^p$ of $\partial_x u_\ep$. 
Therefore, the identification of the limit of the nonlinear convective term $\rho_\ep u_\ep^2$ is not direct.
An important difference with~\cite{P2017} and studies on Navier-Stokes equations with degenerate viscosities, is that we have here a vanishing viscosity, namely the viscosity $\lambda_\ep(\rho)$ goes to $0$ as $\ep \to 0$ for any $\rho < 1$.  
As a result, the control the gradient of $\rho_\ep$ provided by the BD entropy is not uniform with respect to $\ep$. 
This prevents us to use Mellet-Vasseur~\cite{MV} type of estimates to pass to the limit the convective term.
An alternative point of view is provided by the work of Boudin~\cite{boudin2000}. 
It concerns the vanishing viscosity limit for pressureless gases, namely it is the study of Eq.~\eqref{eq:AW-1} where the singular viscosity term $\partial_x(\lambda_\ep \partial_xu_\ep)$ is replaced by the non-singular term $\ep \partial_x^2u_\ep$.
The key ingredient of~\cite{boudin2000} is the use of the concept of \emph{duality solutions} for the limit pressureless gas equations introduced by Bouchut and James in~\cite{bouchut1998,bouchut1999}.  
In this framework, it is particularly important to ensure a \emph{one-sided Lipschitz condition}, or Oleinik entropy condition, on the velocity field. 
It is related to the \emph{compressive property} of the dynamics which turnes out to be useful also in other ``compressive systems'' such as aggregation equations (see for instance the works of James and Vauchelet~\cite{james2013,james2016}).
Note that Berthelin in~\cite{berthelin2002} precisely derived such estimate, $\partial_x u \leq 1/t$, for solutions of the constrained Euler equations obtained through the sticky blocks approximation.
Building upon the recent developments of Constantin et al.~\cite{constantin2020} (see also~\cite{BH}) around the regular solutions for the Navier-Stokes equations, we  derive the $\ep$-uniform one-sided Lipschitz condition $\partial_x u_\ep \leq 1/t$ on the approximate solution. This estimate requires no vaccum at the level of fixed $\ep$.
A ($\ep$ dependent) lower bound on the density $\rho_\ep$ is derived by the use of additional artificial $\ep$-dependent viscosity in $\lambda_\ep(\rho)$. 
This artificial viscosity will be shown to converge to $0$ as $\ep \to 0$ and, therefore, will not perturb the limit system.

\bigskip

The outline of this paper is to first show that for $\ep$ fixed, $(\re,u_\ep)$ is a regular solution to \eqref{eq:dual_ep}.
This step will be done not for the actual system~\eqref{eq:dual_ep}, but for certain approximation of system~\eqref{eq:AW-1} described in Section~\ref{Sec:Approx}. 
We prove the existence of regular solutions to this system in Section~\ref{Sec:3} following the approach of Constantin et al.~\cite{constantin2020}.
Then, we will derive estimates uniform with respect to $\ep$, including the Oleinik entropy condition, in Section \ref{Sec:4}. Finally, we justify the limit passage $\ep\to0$ and conclude the proof of our second main result in Section \ref{Sec:5}. 
For the convenience of the reader we included in the Apendix all details of technical estimates of higher regularity of solutions for $\ep$ fixed.

\section{Approximation and main results}\label{Sec:Approx}
Our first main result concerns the existence of strong solutions to the approximation of system~\eqref{eq:AW-0} involving artificial viscosity corresponding to function $\varphi_\ep(\re)$ introduced below. We will show in the course of the proof of our second main result that the approximate term converges to zero strongly in the limit $\ep\to 0$.
With a slight abuse of notation, we re-define the functions $w_\ep$, and  $\lambda_\ep(\re),$ from the introduction including this approximation.

We consider system \eqref{eq:AW-0} with $w_\ep$ given by 
\begin{equation}\label{w_ep}
w_\ep = u_\ep + \partial_x p_\ep(\rho_\ep) + \underbrace{\partial_x \varphi_\ep(\rho_\ep)}_{\text{approximation}}, 
\end{equation}
where
\eq{\label{def:pvar}
p_\ep(\rho_\ep) = \ep\dfrac{\rho_\ep^\gamma}{(1-\rho_\ep)^\beta}, \quad 
\varphi_\ep(\rho_\ep)  = \dfrac{\ep}{\alpha-1} \rho^{\alpha-1}, \quad
\gamma \geq 0, \quad \beta > 1, \quad \alpha \in \lr{0,\frac{1}{2}}.
}
The approximation of \eqref{eq:AW-1} is thus equal to
\begin{subnumcases}{\label{eq:AW-2}}
\partial_t \rho_\ep + \partial_x(\rho_\ep u_\ep) = 0, \label{eq:AW-2-cont}\\
\partial_t(\rho_\ep u_\ep) + \partial_x(\rho_\ep u_\ep^2) - \partial_x\left(\lambda_\ep(\rho_\ep) \partial_x u_\ep\right) = 0,{\label{eq:AW-2_mom}}
\end{subnumcases}
with $\lambda_\ep$ re-defined as
\eqh{\lambda_\ep(\rho_\ep) = \rho_\ep^2 p'_\ep(\rho_\ep) +  \underbrace{\rho_\ep^2 \varphi'_\ep(\rho_\ep)}_{\text{approximation}}.}
We supplement this system with the following set of initial conditions
\eq{\label{initial}
\rho_\ep|_{t=0}=\rho^0_{\ep},\quad u_{\ep}|_{t=0}=u^0_{\ep}.
}
The existence of unique global smooth solution to the approximate problem \eqref{eq:AW-2} at $\ep>0$ fixed is stated in the theorem below.
\begin{thm}\label{thm:main1}
Let $\ep>0$ be fixed, and let $p_\ep,\ \varphi_\ep$ be given by \eqref{def:pvar}. Assume that the initial data \eqref{initial} satisfy $\rho^0_{\ep}, u^0_{\ep}\in H^3(\mathbb{T})$, with $0 < \rho^0_{\ep} < 1$.

Then, for all $T >0$, there exists a unique global solution $(\rho_\ep, u_\ep)$ to system \eqref{eq:AW-2} such that $0 < \rho_\ep(t,x) < 1$ for all $t \in [0, T]$, $x \in \mathbb{T}$, and 
\begin{equation}\label{eq:regularity}
 \rho_\ep \in \mathcal{C}([0,T]; H^3(\mathbb{T})), \qquad u_\ep \in  \mathcal{C}([0,T]; H^3(\mathbb{T})) \cap L^2(0,T; H^{4}(\mathbb{T})). 
\end{equation} 
\end{thm}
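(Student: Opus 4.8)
The plan is to build a local-in-time smooth solution by a fixed-point argument exploiting parabolicity, and then to propagate a set of a priori bounds --- most importantly a quantitative bound $0<c_\ep(T)\le\re\le C_\ep(T)<1$ --- that rule out any loss of regularity in finite time, so that the solution extends to $[0,T]$ for every $T$. With $e_\ep:=\pe+\vpe$ one has $\lambda_\ep(\re)=\re^2 e_\ep'(\re)$, and by \eqref{w_ep} the momentum equation \eqref{eq:AW-2_mom} is the conservative transport equation $\pt(\re\we)+\px(\re\ue\we)=0$ in the good unknown $\we=\ue+\px e_\ep(\re)$; together with \eqref{eq:AW-2-cont} this yields $\pt\we+\ue\px\we=0$. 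Following Constantin et al.~\cite{constantin2020}, for $\ep$ fixed the system is thus a transport equation for $\re$ driven by $\ue$ coupled to a \emph{uniformly} parabolic equation for $\ue$, the ellipticity of which is controlled as long as $\re$ stays in a fixed compact subset of $(0,1)$. Since $\re^0,\ue^0\in H^3(\T)$ with $0<\re^0<1$ on the compact torus, a standard iteration --- solving the linear parabolic problem for $\ue$ with maximal $L^2_tH^4$ regularity, transporting $\re^0$ along the flow of $\ue$, and contracting in a weaker norm such as $\mathcal C_tL^2\times\mathcal C_tH^1$ --- produces on a maximal interval $[0,T_*)$ a unique solution with the regularity \eqref{eq:regularity} and $0<\re<1$ there. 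Uniqueness on any interval of existence follows from a Grönwall estimate for the difference of two solutions, which is routine since $\re$ is then bounded away from $0$ and $1$ and $\re,\ue\in H^3\hookrightarrow W^{1,\infty}$.

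The heart of the proof is to make $0<c_\ep(T)\le\re\le C_\ep(T)<1$ quantitative on $[0,\min(T_*,T)]$. From $\pt\we+\ue\px\we=0$ one gets $\|\we(t)\|_{L^\infty}\le\|\we^0\|_{L^\infty}$ and conservation of $\int_\T\re f(\we)$ for every $f$; testing \eqref{eq:AW-2_mom} by $\ue$ gives the energy identity, so $\|\sqrt\re\,\ue\|_{L^\infty_tL^2_x}$ and $\|\sqrt{\lambda_\ep(\re)}\,\px\ue\|_{L^2_{t,x}}$ are bounded, and together with conservation of $\int_\T\re\we^2$ and Young's inequality this controls $\|\sqrt\re\,\px e_\ep(\re)\|_{L^\infty_tL^2_x}$. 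Since $e_\ep'(\re)\ge\pe'(\re)\sim\ep\beta(1-\re)^{-\beta-1}$ near $\re=1$, this last bound controls $\px\big((1-\re)^{-\beta}\big)$ on the set $\{\re\ge\tfrac12\}$; anchoring this function at a point where $\re$ lies below its (conserved) spatial average and using $W^{1,2}(\T)\hookrightarrow L^\infty(\T)$ then gives the upper bound $\re\le C_\ep(T)<1$. The lower bound is the delicate point, the $\sqrt\re$-weighted estimate being too weak near vacuum, and this is where the artificial viscosity is used: the choice $\alpha\in(0,\tfrac12)$ makes $e_\ep'(\re)\ge\vpe'(\re)=\ep\re^{\alpha-2}$, so that the same bound controls $\px\big(\re^{\alpha-1/2}\big)$, an exponent lying in $(-\tfrac12,0)$. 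In parallel, following \cite{constantin2020}, I would derive the one-sided Lipschitz (Oleinik) bound $\px\ue\le 1/t$ by a maximum-principle argument on $\px\ue$ that uses the dissipative structure and requires only $\re>0$ pointwise (hence holds on $[0,T_*)$); inserting it into the identity $\frac{d}{dt}\int_\T\re^{\alpha-1/2}=\big(\tfrac32-\alpha\big)\int_\T\px\ue\,\re^{\alpha-1/2}$ (two integrations by parts from \eqref{eq:AW-2-cont}) gives, by Grönwall, a bound on $\int_\T\re^{\alpha-1/2}$ on $[0,T]$, and combined with the $L^2$-control of $\px\big(\re^{\alpha-1/2}\big)$ and the 1D embedding this yields $\re\ge c_\ep(T)>0$.

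Once $c_\ep(T)\le\re\le C_\ep(T)<1$ is known, \eqref{eq:AW-2_mom} is uniformly parabolic with $H^3$ (hence $C^2$) coefficients, and a standard bootstrap --- differentiating the equations, using the transport structure for $\re$ and parabolic energy estimates for $\ue$, all bounds $\ep$-dependent --- gives $\re\in\mathcal C([0,T];H^3(\T))$ and $\ue\in\mathcal C([0,T];H^3(\T))\cap L^2(0,T;H^4(\T))$ on $[0,\min(T_*,T)]$; these computations are deferred to the Appendix. Since all of these norms, as well as $\|1/\re\|_{L^\infty}$ and $\|1/(1-\re)\|_{L^\infty}$, remain bounded as $t\uparrow T_*$, the local existence result reapplies at time $T_*$ and continues the solution past it, contradicting maximality unless $T_*\ge T$; hence the solution is global. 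The main obstacle throughout is precisely the no-vacuum bound $\re\ge c_\ep(T)>0$ and the mild circularity between it and the Oleinik estimate $\px\ue\le 1/t$, which is resolved by carrying both inside the continuation argument on $[0,T_*)$, where the solution is already strictly positive.
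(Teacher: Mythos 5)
Your overall strategy — local existence by a parabolic fixed-point/iteration, a priori bounds that keep $\re$ in a compact subinterval of $(0,1)$, a bootstrap for the $H^3$ regularity, and a continuation argument — is the same as the paper's, and your treatment of the upper bound on $\re$ (BD entropy to control $\px$ of a singular function of $\re$, anchoring at a point below the conserved mean, Sobolev embedding) is a perfectly acceptable variant of Lemma~\ref{lem:up_rho}, where the paper uses Nash and Sobolev inequalities directly on $H^s_\ep(\re)$.

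The genuine gap is in the lower bound. You route it through the Oleinik estimate $\px\ue\le 1/t$ and the identity $\frac{d}{dt}\int_\T\re^{\alpha-1/2}=(\tfrac32-\alpha)\int_\T\px\ue\,\re^{\alpha-1/2}$. But the one-sided Lipschitz estimate, as established (Lemma~\ref{lem:Oleinik}), is an integral/Gr\"onwall argument on the renormalised active potential $\tV_\ep$, and its derivation relies on the higher-order regularity bounds of Proposition~\ref{prop-reg-m3} (to control $\px(\px\lambda_\ep(\re)/\re)$ in $L^1_tL^\infty_x$), on $\|\px\ue\|_{L^1_{t,x}}$, and on $\|1/\lambda_\ep\|_\infty$, every one of which is quantified in terms of $\lrho_\ep$. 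Your claim that the Oleinik estimate ``requires only $\re>0$ pointwise'' is therefore not accurate: it requires the quantitative floor $\re\ge\lrho_\ep$ that you are trying to prove, which is precisely the circularity you acknowledge. Resolving it ``by carrying both inside the continuation argument'' requires showing that the final Oleinik bound is independent of the provisional floor on $[0,T_0]$, $T_0<T_*$, even though all intermediate constants blow up as that floor is lowered; this is true only because of the specific renormalisation $\tV_\ep=(At+1)V_\ep/\underline{\lambda}_\ep$ and the fact that the initial datum $\int(\tV_\ep(0)-A)_+=0$ kills the exponential factor in the Gr\"onwall step, and none of this is spelled out. In fact the Oleinik estimate plays no role at all in the proof of Theorem~\ref{thm:main1}; it is derived later, for the purposes of Theorem~\ref{thm:main2}.

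The simple fix is already in your own argument for the upper bound: by conservation of mass, for each $t$ there is a point $\bar x(t)$ with $\re(t,\bar x(t))\ge M^0_\ep/|\T|>0$, and the BD-entropy control of $\px(\ep\,\re^{\alpha-1/2})$ in $L^\infty_tL^2_x$ together with the fundamental theorem of calculus then gives $\ep\,\re(t,x)^{\alpha-1/2}\le \ep\,(M^0_\ep/|\T|)^{\alpha-1/2}+|\T|^{1/2}\|\px(\ep\re^{\alpha-1/2})\|_{L^2_x}$ for all $x$, hence $\re\ge\lrho_\ep>0$ — this is Lemma~\ref{lem:low_rho}, the exact mirror image of your upper-bound argument. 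It is a pure a priori estimate requiring no dynamics beyond the BD entropy and the mass, and it removes the Oleinik detour and the associated circularity from the existence proof.
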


\bigskip
We further show that $\re,\ue$ from the class \eqref{eq:regularity} satisfy some uniform in $\ep$ estimates that allow us to justify the asymptotic limit $\ep\to0$ in the weak sense.
To this end, we rewrite system \eqref{eq:AW-2} as follows
\begin{subnumcases}{\label{eq:dual_ep}}
\partial_t \rho_\ep + \partial_x (\rho_\ep u_\ep) = 0, \label{eq:dual_ep-cont} \\
\partial_t \lr{\rho_\ep u_\ep + \partial_x \pi_\ep(\re)} + \partial_x \big(\lr{\rho_\ep u_\ep + \partial_x \pi_\ep(\re)} u_\ep \big) = 0, \label{eq:dual_ep-mom}
\end{subnumcases}
where we denoted
\eq{\label{def:pi}
\pi_\ep'(\re)=\re \pe'(\re)+\re\varphi'_\ep(\re).}
We show that for $\ep\to0$ the solutions of \eqref{eq:dual_ep} converge to an entropic weak solution of \eqref{eq:limit}
with the unknowns $\rho,\ u,\ \pi$.
We have the following result.
\begin{thm}\label{thm:main2}
Let assumptions from Theorem \ref{thm:main1} be satisfied, and moreover let
\begin{align}
&0 < \rho^0_\ep(x) \leq 1- C_0 \ep^{\frac{1}{\beta-1}} \quad \forall \ x \in \mathbb{T}, \label{hyp:rho}\\
&\|\sqrt{\rho^0_\ep} u^0_\ep\|_{L^2_x} +  \esssup {(\lambda_\ep(\rho_\ep^0)\partial_x u_\ep^0)} 
\leq C, \label{hyp:vel}\\
&\|\partial_x \pi_\ep(\rho^0_\ep)\|_{L^2_x}\leq C,\\
&0 < \underline{M}^0 \leq M^0_\ep = \int_\T \rho^0_{\ep}\,dx \leq \overline{M}^0 < |\T|,\label{hyp:mass}
\end{align}
for some $C_0, C,\underline{M}^0,\overline{M}^0 > 0$ independent of $\ep$.
Then:
\begin{enumerate}
    \item The solution $(\rho_\ep, u_\ep)$ given by Theorem~\ref{thm:main1} satisfies the following uniform estimates
    \begin{align}
   \rho_\ep(t,x)\leq 1 - C \ep^{\frac{1}{\beta-1}} \quad \forall  \ (t,x) \in [0,T] \times \T,\qquad  
   \|\pi_\ep\|_{L^\infty_t H^1_x}
   \leq C,
    \end{align}
    for some $C >0$, independent of $\ep$, and the one-sided Lipschitz condition
    \begin{equation}
    \partial_x u_\ep(t,x) \leq \frac{1}{t} \quad \forall \ (t,x) \in \ ]0,T ] \times \mathbb{T}.
    \end{equation}
    Moreover, the following inequality holds for any $S \in \mathcal{C}^1(\R)$, convex:
    \[
    \partial_t (\rho_\ep S(u_\ep)) + \partial_x(\rho_\ep u_\ep S(u_\ep)) - \partial_x(S'(u_\ep) \lambda_\ep(\rho_\ep) \partial_x u_\ep) \leq 0 ,
    \quad \forall \ (t,x) \in (0,T) \times \T.
    \]

    \item Let in addition
    \eq{
    &\rho_\ep^0\to \rho^0 \quad weakly\ in \ L^2(\mathbb{T}),\\
    &\rho_\ep^0 u_\ep^0\to \rho^0 u^0 \quad weakly\ in \ L^2(\mathbb{T}),\\
    &\px\pi_\ep(\rho_\ep^0)\to \px \pi^0 \quad weakly\ in \ L^2(\mathbb{T}). 
    }
    Then there exists a subsequence  $(\rho_\ep,u_\ep, \pi_\ep(\rho_\ep))$ of solutions to \eqref{eq:dual_ep}-\eqref{def:pi} with initial data $(\re^0,u_\ep^0,\pi_\ep(\rho^0_\ep))$, which converges to $(\rho,u,\pi)$ a weak solution of \eqref{eq:limit} with initial datum $(\rho^0,u^0,\pi^0)$.\\
    More precisely we have $0 \leq \rho \leq 1$ a.e. and the following convergences hold:
    \begin{align*}
    &\rho_\ep \rightharpoonup \rho \quad \text{weakly-* in} \quad L^\infty((0,T) \times \T), \\
    &\pi_\ep(\rho_\ep) \rightarrow \pi \quad \text{weakly-* in} \quad L^\infty (0,T; H^1(\T)), \\
    &u_\ep \rightarrow u \quad \text{weakly-* in}  \quad   L^\infty((0,T) \times \T).
\end{align*}
Eventually, the following entropy conditions hold:
\begin{itemize}
    \item Oleinik condition 
    \begin{equation}
    \partial_x u \leq \dfrac{1}{t} \qquad \text{in}\quad \mathcal{D}',
    \end{equation}
    \item entropy inequality:
    \begin{equation}
    \partial_t (\rho S(u)) + \partial_x(\rho u S(u)) + \partial_x \Lambda_S \leq 0 \qquad \text{in}\quad \mathcal{D}',
    \end{equation}
    for convex $S \in \mathcal{C}^1(\R)$, and $\Lambda_S \in \mathcal{M}_{t,x}$ satisfying $|\Lambda_S| \leq \mathrm{Lip}_S |\Lambda|$ where $\Lambda = \overline{\lambda_\ep(\rho) \partial_x u}\in \mathcal{M}_{t,x}$.
\end{itemize}
\end{enumerate}
\end{thm}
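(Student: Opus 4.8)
The plan is to establish Part 1 (the $\ep$-uniform estimates) first, since Part 2 follows from these bounds by weak compactness and a careful identification of limits. I would proceed in four stages.

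\textbf{Stage 1: Energy and BD entropy estimates, and the density upper bound.} Starting from the Navier--Stokes form \eqref{eq:AW-2}, I would multiply \eqref{eq:AW-2_mom} by $u_\ep$ and integrate to get the classical energy identity, controlling $\|\sqrt{\rho_\ep}u_\ep\|_{L^\infty_t L^2_x}$ and $\|\sqrt{\lambda_\ep(\rho_\ep)}\partial_x u_\ep\|_{L^2_{t,x}}$ in terms of the initial data via \eqref{hyp:vel}. Next, I would compute the BD entropy: introducing the effective velocity $v_\ep = u_\ep + \partial_x \varphi_\ep(\rho_\ep) + \partial_x p_\ep(\rho_\ep) = w_\ep$, or rather working with $\rho_\ep u_\ep + \partial_x \pi_\ep(\rho_\ep)$ as in \eqref{eq:dual_ep}, one obtains an additional energy-type identity controlling $\|\partial_x \pi_\ep(\rho_\ep)\|_{L^\infty_t L^2_x}$ using \eqref{hyp:vel} and the assumption $\|\partial_x \pi_\ep(\rho^0_\ep)\|_{L^2_x}\le C$. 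Since $\pi_\ep'(\rho) = \rho p_\ep'(\rho) + \rho \varphi_\ep'(\rho)$ and $p_\ep$ is singular of order $\beta>1$ at $\rho=1$, the quantity $\pi_\ep(\rho_\ep)$ is itself bounded below by something like $c\,\ep (1-\rho_\ep)^{-(\beta-1)}$ near congestion. Combining the $L^2_x$ bound on $\pi_\ep(\rho_\ep)$ with the $L^2_x$ bound on its derivative gives an $L^\infty_x$ bound on $\pi_\ep(\rho_\ep)$ by a one-dimensional Gagliardo--Nirenberg/Sobolev inequality; translating this back yields $\ep(1-\rho_\ep)^{-(\beta-1)} \le C$ uniformly, i.e. the crucial bound $\rho_\ep \le 1 - C\ep^{1/(\beta-1)}$. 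The mass bounds \eqref{hyp:mass} are propagated trivially since total mass is conserved, and are needed to keep $\pi_\ep$ from being pushed uniformly to the singular regime.

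\textbf{Stage 2: The one-sided Lipschitz (Oleinik) estimate.} This is the step I expect to be the main obstacle. Following Constantin et al.~\cite{constantin2020}, I would differentiate the momentum equation in $x$ and derive an equation for $q_\ep := \partial_x u_\ep$ along characteristics. Schematically, writing $D_t = \partial_t + u_\ep \partial_x$, one gets $D_t q_\ep + q_\ep^2 = (\text{diffusion/lower-order terms})$, and the key is to show the right-hand side does not destroy the Riccati comparison $D_t q_\ep + q_\ep^2 \le 0$, which integrates to $q_\ep(t) \le 1/t$ independently of the initial data and of $\ep$. The delicate point is handling the term $\rho_\ep^{-1}\partial_x(\lambda_\ep(\rho_\ep)\partial_x u_\ep)$ after differentiation: one must use the specific algebraic structure of $\lambda_\ep = \rho_\ep^2(p_\ep' + \varphi_\ep')$ — in particular that $\lambda_\ep/\rho_\ep^2$ is a derivative of a convex function of $1/\rho_\ep$ or similar — so that the viscous contribution to the $q_\ep$-equation has a favorable sign at an interior maximum. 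This is precisely where the absence of vacuum ($\rho_\ep > 0$, guaranteed for fixed $\ep$ by the artificial $\varphi_\ep$ viscosity) is essential, since one divides by $\rho_\ep$. A maximum-principle argument on $t q_\ep$ then closes the estimate.

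\textbf{Stage 3: The renormalized/entropy inequality for convex $S$.} For $S \in \mathcal{C}^1(\R)$ convex, I would multiply \eqref{eq:AW-2_mom} (in nonconservative form $\rho_\ep D_t u_\ep = \partial_x(\lambda_\ep \partial_x u_\ep)$) by $S'(u_\ep)$, use the continuity equation to form $\partial_t(\rho_\ep S(u_\ep)) + \partial_x(\rho_\ep u_\ep S(u_\ep))$, and move the diffusion term: $S'(u_\ep)\partial_x(\lambda_\ep \partial_x u_\ep) = \partial_x(S'(u_\ep)\lambda_\ep \partial_x u_\ep) - S''(u_\ep)\lambda_\ep (\partial_x u_\ep)^2$. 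Since $S'' \ge 0$ and $\lambda_\ep \ge 0$, the last term has the right sign, giving exactly the stated inequality $\partial_t(\rho_\ep S(u_\ep)) + \partial_x(\rho_\ep u_\ep S(u_\ep)) - \partial_x(S'(u_\ep)\lambda_\ep \partial_x u_\ep) \le 0$. This is rigorous at the level of the smooth solutions from Theorem~\ref{thm:main1}.

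\textbf{Stage 4: Passage to the limit $\ep \to 0$ (Part 2).} From Stage 1 I have: $\rho_\ep$ bounded in $L^\infty_{t,x}$; $\pi_\ep(\rho_\ep)$ bounded in $L^\infty_t H^1_x$; $u_\ep$ bounded in $L^\infty_{t,x}$ (the uniform $L^\infty$ bound on $u_\ep$ should follow from Stage 3 applied to $S(u) = (u-M)^+$-type functions or from a direct maximum-principle argument, giving $\|u_\ep\|_{L^\infty} \le \|u_\ep^0\|_{L^\infty}$ by convexity/maximum principle for the parabolic equation); and $\lambda_\ep(\rho_\ep)\partial_x u_\ep$ bounded in $L^2_{t,x}$, hence (using boundedness of $\rho_\ep$ and the energy bound) in fact bounded in $\mathcal{M}_{t,x}$, converging to some measure $\Lambda$. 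I extract weakly-* convergent subsequences $\rho_\ep \rightharpoonup \rho$, $\pi_\ep(\rho_\ep) \rightharpoonup \pi$, $u_\ep \rightharpoonup u$. The constraint $(1-\rho)\pi = 0$ follows because on $\{\rho < 1-\delta\}$ one has $\pi_\ep(\rho_\ep) \le C_\ep \to 0$ (since $\varphi_\ep, p_\ep$ carry an explicit factor $\ep$ and $\rho_\ep$ stays away from $1$ there), while $\pi \ge 0$ is clear from $\pi_\ep \ge 0$. The artificial viscosity term $\rho_\ep^2 \varphi_\ep'(\rho_\ep)\partial_x u_\ep = \ep \rho_\ep^{\alpha}\partial_x u_\ep$ vanishes in the limit: pairing with a test function and using the energy bound $\sqrt{\lambda_\ep}\partial_x u_\ep \in L^2$ together with $\alpha \in (0,1/2)$ to show this piece is $o(1)$. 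The main remaining difficulty — identifying the nonlinear convective term $\rho_\ep u_\ep^2$, equivalently passing to the limit in \eqref{eq:dual_ep-mom} — is handled exactly as in the duality-solution framework: the Oleinik bound $\partial_x u_\ep \le 1/t$ gives compactness of the "momentum" $m_\ep := \rho_\ep u_\ep + \partial_x\pi_\ep(\rho_\ep)$ in the sense required, and since $u_\ep$ is a bounded BV-in-$x$-type quantity (one-sided Lipschitz plus $L^\infty$) one gets strong-enough convergence of $u_\ep$ to pass to the limit in products; concretely, $\partial_t m_\ep + \partial_x(m_\ep u_\ep) = 0$ passes to $\partial_t(\rho u + \partial_x\pi) + \partial_x((\rho u + \partial_x\pi)u) = 0$ in $\mathcal{D}'$. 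Finally the limiting Oleinik condition $\partial_x u \le 1/t$ in $\mathcal{D}'$ is stable under weak-* convergence, and the limiting entropy inequality $\partial_t(\rho S(u)) + \partial_x(\rho u S(u)) + \partial_x \Lambda_S \le 0$ is obtained by passing to the limit in the inequality of Stage 3, with $\Lambda_S$ the weak-* limit of $S'(u_\ep)\lambda_\ep(\rho_\ep)\partial_x u_\ep$, which is dominated by $\mathrm{Lip}_S |\Lambda|$ because $|S'(u_\ep)| \le \mathrm{Lip}_S$ uniformly. The main obstacle throughout Part 2 is the identification of the convective limit without a uniform $L^p$ bound on $\partial_x u_\ep$; the Oleinik estimate from Stage 2 is exactly what makes this possible.
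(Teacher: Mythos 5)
Your overall strategy follows the paper's, but there is a genuine and load-bearing gap in Stage 1 that propagates into Stage 4.

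\textbf{The missing control of $\pi_\ep$ itself.}
In Stage 1 you invoke ``the $L^2_x$ bound on $\pi_\ep(\rho_\ep)$'' and combine it with the $L^2_x$ bound on $\partial_x\pi_\ep(\rho_\ep)$ via Gagliardo--Nirenberg. But the BD entropy only controls $\partial_x \pi_\ep(\rho_\ep)$ in $L^\infty_t L^2_x$ and the \emph{less singular} quantity $H_\ep(\rho_\ep)=\int_0^{\rho_\ep}(p_\ep+\varphi_\ep)$ in $L^\infty_t L^1_x$; it gives no bound on $\pi_\ep(\rho_\ep)$ itself, which near $\rho=1$ behaves like $\ep(1-\rho)^{-\beta}$, one full power more singular than $H_\ep\sim\ep(1-\rho)^{-(\beta-1)}$. (Your assertion that $\pi_\ep\sim c\ep(1-\rho_\ep)^{-(\beta-1)}$ is off by a power for the same reason.) The paper's Lemma~\ref{lem:uni_p} obtains the $L^\infty_t L^1_x$ bound on $\pi_\ep(\rho_\ep)$ by testing the momentum equation with $\psi(t,x)=\int_0^x(\rho_\ep - \langle\rho_\ep\rangle)\,dy$, which is precisely where the mass hypothesis~\eqref{hyp:mass} (forcing $\langle\rho_\ep\rangle$ strictly below $1$ uniformly in $\ep$) is used: it makes the weight $\rho_\ep - \langle\rho_\ep\rangle$ bounded away from zero on the singular set $\{\rho_\ep>\rho_\ep^m\}$, and then the equation for $s_\ep=\rho_\ep p_\ep(\rho_\ep)$ closes the $L^1$ estimate. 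Your Stage 1 neither performs this test-function argument nor explains where~\eqref{hyp:mass} enters, so the claimed $\|\pi_\ep\|_{L^\infty_t H^1_x}\le C$ is not established. Note, on the other hand, that the density ceiling $\rho_\ep\le 1-C\ep^{1/(\beta-1)}$ \emph{can} be obtained as you describe, provided you work with $H_\ep^s$ (controlled by the BD entropy) instead of $\pi_\ep$; the two must not be conflated.

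\textbf{Consequence in Stage 4: $\lambda_\ep \partial_x u_\ep$ is not $L^2_{t,x}$-bounded.}
You assert that $\lambda_\ep(\rho_\ep)\partial_x u_\ep$ is uniformly bounded in $L^2_{t,x}$. This is false: the energy estimate bounds $\sqrt{\lambda_\ep}\,\partial_x u_\ep$ in $L^2_{t,x}$, but $\|\lambda_\ep(\rho_\ep)\|_{L^\infty_{t,x}}$ behaves like $\ep^{-2/(\beta-1)}$ under the ceiling $\rho_\ep\le 1-C\ep^{1/(\beta-1)}$, so the missing $\sqrt{\lambda_\ep}$ factor is not uniformly bounded. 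The correct uniform bound is $\|\lambda_\ep(\rho_\ep)\partial_x u_\ep\|_{L^1_{t,x}}\le C$ (Corollary~\ref{cor:control-unif-V}), which again comes out of the test-function argument (specifically from~\eqref{p_control}); without it, you have no bounded family to pass to a limit measure $\Lambda$, and the entropy inequality in Part 2 does not close. Relatedly, your argument for $(1-\rho)\pi=0$ reasons about the set $\{\rho<1-\delta\}$ for the \emph{limit} $\rho$, but $\rho_\ep$ converges to $\rho$ only weakly, so pointwise information on $\rho$ does not transfer to $\rho_\ep$; what is actually needed is (i) the strong convergence $(1-\rho_\ep)\pi_\ep(\rho_\ep)\to 0$, which uses the uniform $L^\infty$ bound on $\pi_\ep$ and the ceiling on $\rho_\ep$, and (ii) a compensated-compactness/div-curl lemma (control of $\partial_t\rho_\ep$ in a negative Sobolev space against control of $\partial_x\pi_\ep$) to identify the weak limit of the product as $(1-\rho)\pi$. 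Your sketch of Stage 2 (Oleinik) is schematically consistent with the paper, though it omits the decisive change of variable to the active potential $V_\ep=\lambda_\ep(\rho_\ep)\partial_x u_\ep$ and then $\tilde V_\ep=(At+1)V_\ep/\underline{\lambda}_\ep$, which is what makes the Riccati-type structure usable; Stage 3 is exactly the paper's computation.
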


\bigskip
\begin{rmk}

The hypotheses on initial data are significantly stronger than the usual energy-related bounds. In particular:
\begin{itemize}
    \item  The assumptions for the initial velocity include the upper bound for ${(\lambda_\ep(\rho_\ep^0)\partial_x u_\ep^0)}$ uniformly in $\ep$. This is to deduce the Oleinik entropy condition discussed in the introduction.
    \item The initial condition~\eqref{hyp:mass} amounts to say the limit system (for $\ep=0$) cannot be fully congested. 
    This condition is required to control the singular potential $\pi_\ep(\rho_\ep)$ in Section~\ref{Sec:estim-ep}. Analogous constraint has been imposed to study the asymptotic limit of the  compressible Navier-Stokes equations with singular pressure, see \cite{PZ, PWZ}.
\end{itemize}

\end{rmk}

\begin{rmk}
The choice of the approximate function $\varphi_\ep$ is motivated by the uniform lower bound of the density. It was shown in \cite{MV} that, for viscosity coefficient proportional to $\rho^\alpha$ with $\alpha\in [0,1/2)$, the weak solutions to compressible Navier-Stokes have density bounded away from zero by a constant. We use this property at the level of $\ep$ being fixed in order to derive the Oleinik entropy condition, but we also show that in the limit passage $\ep\to0$, $\varphi_\ep$ converges to zero strongly.
\end{rmk}

\begin{rmk}
The main difficulty in studying the $\ep\to 0$ limit passage is to justify convergence of the nonlinear terms. In particular, to pass to the limit in the convective term $\rho_\ep u_\ep$ one needs compactness of the velocity sequence with respect to space-variable. For compressible Navier-Stokes equations with constant viscosity coefficient this sort of information is deduced directly from the a-priori estimates. When the viscosity coefficient is degenerate, one can compensate lack of the a-priori estimate by higher regularity of the density via the so-called Bresch-Desjardins estimate.
Here, the compactness w.r.t. space follows directly from the one-sided Lipschitz condition, which is possible to deduce because the system is pressureless. 
\end{rmk}

\begin{rmk}[More general singular functions $p_\ep$]
The specific form of the function $p_\ep$ (which blows up close to 1 like a power law) is used in the paper to exhibit the small scales associated to the singular limit $\ep \to 0$ (see in particular Lemma~\ref{lem:up_rho}).
Nevertheless, we expect similar results for more general (monotone) hard-sphere potentials. 
All the estimates will then depend on the specific balance between the parameter $\ep$ and the type of the singularity close to 1 encoded in
the function $p_\ep$.
\end{rmk}

\section{Proof of Theorem \ref{thm:main1}, existence of smooth approximate solutions}\label{Sec:3}
The first step of the proof is to construct local in time unique regular solution to \eqref{eq:AW-2}. Thanks to the presence of the approximation term $\vpe$ this can be done following the iterative scheme, described for example in Appendix B of Constantin et al. \cite{constantin2020}. The extension of this solution to the global in time solution requires some uniform (with respect to time) estimates that are presented below.

\subsection{Basic energy estimates}
In this section we assume that $\re,u_\ep$ are regular solutions to \eqref{eq:AW-2} in the time interval $[0,T]$ in the class \eqref{eq:regularity}, and that $\re$ is non-negative. For such solutions we first obtain straight from the continuity equation \eqref{eq:AW-2-cont} that
\eq{
\|\re\|_{L^1_x}(t)=\|\re^0\|_{L^1_x},
}
for all $t\in[0,T]$. Multiplying the momentum equation \eqref{eq:AW-2_mom} by $\ue$ and integrating by parts we obtain the classical energy estimate.
\begin{lem}\label{lem:energy}
For a regular solutions of system~\eqref{eq:AW-2}, we have
\begin{equation}\label{eq:energy}
\|\sqrt{\rho_\ep} u_\ep\|_{L^\infty_t L^2_x}^2 + \| \sqrt{\lambda_\ep(\rho_\ep)} \partial_x u_\ep \|_{L^2_t L^2_x}^2 \leq C E_{0,\ep}  ,
\end{equation}
with
\begin{equation*}
E_{0,\ep} := \|\sqrt{\rho_\ep^0} u_\ep^0\|_{L^2_x}^2.
\end{equation*}	
\end{lem}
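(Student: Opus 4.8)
The plan is to derive \eqref{eq:energy} by the standard energy identity for pressureless Navier--Stokes with density-dependent viscosity, noting that in the class \eqref{eq:regularity} all the manipulations below are rigorous. First I would put the momentum equation \eqref{eq:AW-2_mom} in non-conservative form: using \eqref{eq:AW-2-cont},
\[
\partial_t(\re\ue)+\partial_x(\re\ue^2)=\ue\big[\partial_t\re+\partial_x(\re\ue)\big]+\re\big[\partial_t\ue+\ue\partial_x\ue\big]=\re\big(\partial_t\ue+\ue\partial_x\ue\big),
\]
so \eqref{eq:AW-2_mom} reads $\re(\partial_t\ue+\ue\partial_x\ue)=\partial_x(\lambda_\ep(\re)\partial_x\ue)$. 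Multiplying by $\ue$ and using \eqref{eq:AW-2-cont} once more to absorb the $\partial_t\re$ and $\partial_x(\re\ue)$ contributions, one gets the pointwise identity
\[
\partial_t\Big(\tfrac12\re\ue^2\Big)+\partial_x\Big(\tfrac12\re\ue^3\Big)=\ue\,\partial_x\big(\lambda_\ep(\re)\partial_x\ue\big).
\]

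Next I would integrate this identity over $\T$. The flux term $\partial_x(\tfrac12\re\ue^3)$ integrates to zero by periodicity, and an integration by parts (again no boundary contribution on $\T$) converts the right-hand side into $-\int_\T\lambda_\ep(\re)(\partial_x\ue)^2\,dx$. Hence
\[
\frac{d}{dt}\int_\T\tfrac12\re\ue^2\,dx+\int_\T\lambda_\ep(\re)(\partial_x\ue)^2\,dx=0,
\]
and integrating in time on $[0,t]$ gives
\[
\int_\T\tfrac12\re\ue^2\,dx\,(t)+\int_0^t\!\!\int_\T\lambda_\ep(\re)(\partial_x\ue)^2\,dx\,ds=\tfrac12\|\sqrt{\re^0}\,\ue^0\|_{L^2_x}^2=\tfrac12E_{0,\ep}.
\]
Taking the supremum over $t\in[0,T]$ in the first term and $t=T$ in the second yields \eqref{eq:energy} with, say, $C=2$, provided we know $\lambda_\ep(\re)\geq0$ so that both left-hand terms are nonnegative.

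That nonnegativity is the one small point to verify: from \eqref{def:pvar}, $p_\ep$ is increasing on $(0,1)$, so $p'_\ep(\rho)\geq0$ there, and $\rho^2\varphi'_\ep(\rho)=\ep\rho^{\alpha}\geq0$; therefore $\lambda_\ep(\re)=\re^2p'_\ep(\re)+\re^2\varphi'_\ep(\re)\geq0$ on $(0,1)$. Moreover, since $\re\in\mathcal{C}([0,T];H^3(\T))$ with $0<\re<1$ on the compact set $[0,T]\times\T$, the density is bounded away from both $0$ and $1$, so $\lambda_\ep(\re)$ is a smooth bounded coefficient, all the integrals above are finite, and the differentiation under the integral sign together with the integrations by parts are justified by the regularity in \eqref{eq:regularity}.

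I do not expect any genuine obstacle in this lemma: it is the textbook energy estimate, and the only care needed is (i) the sign of $\lambda_\ep$ and (ii) checking the formal computation is admissible in the class \eqref{eq:regularity}, both immediate. This bound will later be complemented by the BD entropy and the Oleinik-type one-sided Lipschitz estimate, which are the substantive ingredients of the analysis.
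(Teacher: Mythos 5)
Your proof is correct and follows essentially the same route the paper indicates: multiply the momentum equation \eqref{eq:AW-2_mom} by $u_\ep$, integrate by parts on $\T$ (using the continuity equation to absorb the $\partial_t\rho_\ep$ and convection contributions), and note that $\lambda_\ep\geq 0$ to keep both terms signed. The paper offers no more detail than the one-sentence remark preceding the lemma, so there is nothing to contrast here; your write-up simply makes that textbook computation explicit, including the small but worthwhile check that $\lambda_\ep(\rho_\ep)\geq 0$ and that the regularity class \eqref{eq:regularity} together with $0<\rho_\ep<1$ justifies the integrations.
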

Next energy estimate is an analogue of Bresch-Desjardins entropy for the compressible Navier-Stokes. We first introduce the quantity
\eq{H'_\ep(\rho_\ep) = p_\ep(\rho_\ep) + \varphi_\ep(\rho_\ep).}

\begin{lem}\label{lem:BD-entropy}
For a regular solution of system~\eqref{eq:AW-2}, we have
\begin{equation}\label{eq:BD-entropy}
\|\sqrt{\rho_\ep} w_\ep \|_{L^\infty_t L^2_x}^2 + \|H_\ep(\rho_\ep)\|_{L^\infty_t L^1_x} + \|\sqrt{\rho_\ep} \partial_x \big(p_\ep(\rho_\ep) + \varphi_\ep(\rho_\ep)\big) \|_{L^2_t L^2_x}^2 
\leq C E_{1,\ep} (1 + T),
\end{equation}
with 
\begin{equation*}
E_{1,\ep} := \|\sqrt{\rho_\ep^0} w_\ep^0 \|_{L^2_x}^2 + \|H_\ep(\rho_\ep^0)\|_{L^1_x}.
\end{equation*}	
\end{lem}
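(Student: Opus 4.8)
The plan is to mimic the classical Bresch--Desjardins computation, exploiting the particular algebraic structure $\lambda_\ep(\rho_\ep) = \rho_\ep^2(p_\ep'(\rho_\ep)+\varphi_\ep'(\rho_\ep)) = \rho_\ep^2 H_\ep''(\rho_\ep)$, which is exactly what makes the desired-velocity variable $w_\ep = u_\ep + \partial_x(p_\ep(\rho_\ep)+\varphi_\ep(\rho_\ep)) = u_\ep + \partial_x H_\ep'(\rho_\ep)$ the right unknown. First I would recall that $w_\ep$ is transported by the flow: rewriting \eqref{eq:AW-2} in terms of $w_\ep$ gives $\partial_t(\rho_\ep w_\ep) + \partial_x(\rho_\ep u_\ep w_\ep) = 0$ together with the continuity equation, so $\partial_t w_\ep + u_\ep \partial_x w_\ep = 0$ pointwise for smooth solutions with $\rho_\ep>0$. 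Multiplying $\partial_t(\rho_\ep w_\ep)+\partial_x(\rho_\ep u_\ep w_\ep)=0$ by $w_\ep$ and integrating over $\T$ then yields, after the usual cancellation using the continuity equation,
\[
\frac{d}{dt}\int_\T \tfrac12 \rho_\ep w_\ep^2\,dx = 0,
\]
so in fact $\|\sqrt{\rho_\ep}w_\ep\|_{L^2_x}$ is conserved. That already controls the first term on the left of \eqref{eq:BD-entropy} by $E_{1,\ep}$.

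Next I would extract the dissipation and the $H_\ep$ term. Expanding $\tfrac12\rho_\ep w_\ep^2 = \tfrac12\rho_\ep u_\ep^2 + \rho_\ep u_\ep\,\partial_x H_\ep'(\rho_\ep) + \tfrac12\rho_\ep(\partial_x H_\ep'(\rho_\ep))^2$ and using the energy identity behind Lemma~\ref{lem:energy} to remove the $\tfrac12\rho_\ep u_\ep^2$ contribution, the cross term is handled by the continuity equation: $\int_\T \rho_\ep u_\ep \partial_x H_\ep'(\rho_\ep)\,dx = -\int_\T H_\ep'(\rho_\ep)\partial_x(\rho_\ep u_\ep)\,dx = \int_\T H_\ep'(\rho_\ep)\partial_t\rho_\ep\,dx = \frac{d}{dt}\int_\T H_\ep(\rho_\ep)\,dx$. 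Combining the conservation of $\int \tfrac12\rho_\ep w_\ep^2$ with the energy identity produces an identity of the form
\[
\frac{d}{dt}\int_\T H_\ep(\rho_\ep)\,dx + \int_\T \lambda_\ep(\rho_\ep)(\partial_x u_\ep)^2\,dx
= -\,\frac{d}{dt}\!\int_\T \tfrac12\rho_\ep(\partial_x H_\ep'(\rho_\ep))^2\,dx - (\text{remainder}),
\]
and the key point is that the remainder assembles, via $\lambda_\ep = \rho_\ep^2 H_\ep''$, into the good dissipation term $\int_\T \rho_\ep\big(\partial_x(p_\ep(\rho_\ep)+\varphi_\ep(\rho_\ep))\big)^2\,dx = \int_\T \rho_\ep (H_\ep''(\rho_\ep))^2(\partial_x\rho_\ep)^2\,dx$ with a favorable sign (this is precisely the cross term $\int \partial_x u_\ep \cdot \lambda_\ep(\rho_\ep) H_\ep''(\rho_\ep)\partial_x\rho_\ep/\rho_\ep\,dx$-type manipulation used by Bresch--Desjardins). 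Integrating in time from $0$ to $t$, using $H_\ep \geq 0$ (since $p_\ep,\varphi_\ep\geq 0$ for $0<\rho_\ep<1$, so $H_\ep'$ is increasing and $H_\ep(0)$ can be normalized to $0$), and absorbing the conserved $\|\sqrt{\rho_\ep}w_\ep\|^2$ term, one obtains \eqref{eq:BD-entropy} with the factor $(1+T)$ coming from the linear-in-time growth if any lower-order terms need Gr\"onwall; here in fact the estimate should be essentially uniform in $T$, but the stated $(1+T)$ is a safe bound.

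The main obstacle is purely bookkeeping: carrying out the BD algebra rigorously at the level of the \emph{approximate} system, i.e. checking that the extra $\varphi_\ep$ contribution fits into the same structure (it does, since $\lambda_\ep$ was redefined to include $\rho_\ep^2\varphi_\ep'$, so $H_\ep''=p_\ep'+\varphi_\ep'$ and everything goes through verbatim), and justifying all integrations by parts and the identity $\partial_t w_\ep + u_\ep\partial_x w_\ep=0$ using the regularity \eqref{eq:regularity} and the strict positivity $0<\rho_\ep<1$ from Theorem~\ref{thm:main1}. Since the solutions are smooth and $\rho_\ep$ is bounded away from $0$ and $1$ at fixed $\ep$, there is no analytic difficulty; the only care needed is that all constants are tracked to be independent of $T$ (they are, modulo the harmless $(1+T)$) while being allowed to depend on $\ep$ through $E_{1,\ep}$.
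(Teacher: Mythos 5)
The route you sketch --- first conserving $\int_\T \tfrac12\rho_\ep w_\ep^2\,dx$ and then expanding the square, with the continuity equation converting the cross term $\int_\T \rho_\ep u_\ep\,\partial_x H'_\ep(\rho_\ep)\,dx$ into $\tfrac{d}{dt}\int_\T H_\ep(\rho_\ep)\,dx$ --- is algebraically the same computation as the paper's, packaged differently: the paper rewrites the continuity equation as the porous-medium equation $\partial_t\rho_\ep+\partial_x(\rho_\ep w_\ep)-\partial_x\big(\rho_\ep\partial_x H'_\ep(\rho_\ep)\big)=0$ and tests it with $H'_\ep(\rho_\ep)$, obtaining $\tfrac{d}{dt}\int H_\ep(\rho_\ep) + \int\rho_\ep|\partial_x H'_\ep(\rho_\ep)|^2 = \int\rho_\ep w_\ep\,\partial_x H'_\ep(\rho_\ep)$ and then Cauchy--Schwarz. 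Two of your intermediate claims are inaccurate, although they do not derail the endpoint: you do not need the energy identity of Lemma~\ref{lem:energy} at all (the $\tfrac12\rho_\ep u_\ep^2$ contribution enters with a favourable sign and can simply be dropped, since the expansion reads $\tfrac{d}{dt}\int H_\ep + \int\tfrac12\rho_\ep|\partial_x H'_\ep(\rho_\ep)|^2 = \int\tfrac12\rho_\ep w_\ep^2 - \int\tfrac12\rho_\ep u_\ep^2$), and the displayed ``identity of the form'' with $\int\lambda_\ep(\rho_\ep)(\partial_x u_\ep)^2$ on the left does not come out --- the viscous dissipation plays no role in this lemma, and forcing the energy identity in produces a second time derivative of $\int H_\ep$, not a first.

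The genuine gap is the assertion that $H_\ep\geq 0$ because ``$p_\ep,\varphi_\ep\geq 0$''. This is false: from~\eqref{def:pvar}, $\varphi_\ep(\rho)=\tfrac{\ep}{\alpha-1}\rho^{\alpha-1}$ with $\alpha\in(0,\tfrac12)$, so $\alpha-1<0$ and $\varphi_\ep<0$ on $(0,1)$. Hence $H_\ep=H_\ep^s+H_\ep^n$, where $H_\ep^s=\int_0^\rho p_\ep(r)\,dr\geq 0$ is singular near $\rho=1$, but $H_\ep^n=\int_0^\rho\varphi_\ep(r)\,dr=\tfrac{\ep}{\alpha(\alpha-1)}\rho^\alpha<0$. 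The Gronwall-in-time step only controls the \emph{signed} integral $\int_\T H_\ep(\rho_\ep)\,dx$, and a lower bound on this quantity is equally needed to close the dissipation estimate on $\|\sqrt{\rho_\ep}\,\partial_x H'_\ep(\rho_\ep)\|_{L^2_tL^2_x}^2$. Roughly half of the paper's proof is devoted to showing that $\int_\T|H_\ep^n(\rho_\ep)|\,dx$ can be absorbed into $\int_\T H_\ep^s(\rho_\ep)\,dx$ by splitting according to whether $\rho_\ep$ is within $O(\ep^{1/(\beta-1)})$ of $1$: close to the threshold, $H_\ep^s$ dominates $|H_\ep^n|$ pointwise, and away from it $|H_\ep^n|=O(\ep)$. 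Your proposal skips this step entirely, so as written it does not establish the $\|H_\ep(\rho_\ep)\|_{L^\infty_tL^1_x}$ bound claimed in~\eqref{eq:BD-entropy}.
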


\begin{proof}
Recall that at the level of regular solutions, the system \eqref{eq:AW-2} can be reformulated  as \eqref{eq:AW-0} with
$w_\ep$ given by \eqref{w_ep}.
Therefore, multiplying the equation \eqref{eq:AW-0_mom} by $w_\ep$ and integrating by parts we easily  obtain 
\begin{equation*}
\|\sqrt{\rho_\ep} w_\ep \|_{L^\infty_t L^2_x} \leq \|\sqrt{\rho_\ep^0} w_\ep^0 \|_{L^2_x}.
\end{equation*}
Using again formula \eqref{w_ep} to substitute for $u_\ep$ in \eqref{eq:AW-2-cont} we obtain a porous medium equation for $\re$
\begin{equation}\label{eq:AW-3}
\partial_t \rho_\ep + \partial_x(\rho_\ep w_\ep) - \partial_x\Big(\rho_\ep \partial_x \big( p_\ep(\rho_\ep) + \varphi_\ep(\rho_\ep)\big) \Big) = 0. 
\end{equation}
Multiplying this equation by $H'_\ep(\rho) = p_\ep(\rho) + \varphi_\ep(\rho)$ and integrating over space and time, we get
\begin{align*}\
\sup_{t \in (0,T)} \int_{\T} H_\ep(\rho_\ep) dx + \|\sqrt{\rho_\ep} \partial_x \big(H'_\ep(\rho_\ep)\big) \|_{L^2_t L^2_x}^2 
& \leq \int_\T H_\ep(\rho_\ep^0)dx + C \|\sqrt{\rho_\ep} w_\ep\|_{L^2_t L^2_x}^2 \nonumber \\
& \leq \int_\T H_\ep(\rho_\ep^0)dx + C T \|\sqrt{\rho_\ep^0} w_\ep^0 \|_{L^2_x}^2.
\end{align*}
Note that $H_\ep$ consists of two components: positive but singular $H_\ep^s=\int_0^{\re}p_\ep(r)\,dr$ and the non-singular negative one $H_\ep^n=\int_0^\rho \varphi_\ep(r) dr $ since $\alpha < 1$. However can absorb this negative contribution in $H_\ep^{s}(\rho)$ as follows.
Let us first observe that
\[
H^s_\ep(r) \underset{r \to 1^-}\sim \dfrac{\ep}{(1-r)^{\beta-1}},
\]
and therefore, for any $C_1, C_2 > 0$ (independent of $\ep$) there exist $C_1', C_2'$ (also independent of $\ep$) such that
\begin{align*}
H^s_\ep(\rho_\ep) \mathbf{1}_{\{ \rho_\ep \geq 1 - C_1' \ep^{\frac{1}{\beta-1}} \} } \geq C_1 \rho_\ep^\alpha  \mathbf{1}_{\{ \rho_\ep \geq 1 - C_1' \ep^{\frac{1}{\beta-1}} \} }; \\
\int_\Omega H^s_\ep(\rho_\ep) \mathbf{1}_{\{ 1 - C_2' \ep^{\frac{1}{\beta-1}} \leq \rho_\ep \leq 1 - C_1' \ep^{\frac{1}{\beta-1}}  \} } dx \geq C_2 \ep.
\end{align*}
Hence, splitting the integral of $H^n_\ep$ into two parts, we get
\begin{align*}
\int_\Omega |H_\ep^n(\rho_\ep)|
& = \int_{\Omega} \dfrac{\ep}{\alpha (1-\alpha)} \rho_\ep^\alpha \mathbf{1}_{\{ \rho_\ep \leq 1 - C_1' \ep^{\frac{1}{\beta-1}} \} } dx
+  \int_{\Omega} \dfrac{\ep}{\alpha (1-\alpha)} \rho_\ep^\alpha \mathbf{1}_{\{ \rho_\ep \geq 1 - C_1' \ep^{\frac{1}{\beta-1}} \} } dx\\
& \leq \dfrac{\ep}{\alpha(1-\alpha)}|\Omega| 
+ \int_\Omega |H_\ep^s(\rho_\ep)| \mathbf{1}_{\{ \rho_\ep \geq 1 - C_1' \ep^{\frac{1}{\beta-1}} \} } dx \\
& \leq \int_\Omega |H_\ep^s(\rho_\ep)| \mathbf{1}_{\{ 1 - C_2' \ep^{\frac{1}{\beta-1}} \leq \rho_\ep \leq 1 - C_1' \ep^{\frac{1}{\beta-1}} \} } dx
+ \int_\Omega |H_\ep^s(\rho_\ep)| \mathbf{1}_{\{ \rho_\ep \geq 1 - C_1' \ep^{\frac{1}{\beta-1}} \} } dx
\end{align*}
for sufficiently large $C_1', C_2' >0$, independent of $\ep$.

This gives the result of the lemma.
\end{proof}

\subsection{Upper and lower bound on the density}
The purpose of this section is to prove that $\re$ is uniformly (in time) bounded from above by $ \urho_\ep$ and from below by $\lrho_\ep$. This will be done in two lemmas below.
\begin{lem}[Upper bound on the density]\label{lem:up_rho}
Let $T >0$, and let $(\rho_\ep, u_\ep)$ be a solution to \eqref{eq:AW-2} in the class \eqref{eq:regularity}, and satisfying the energy estimates \eqref{eq:energy} and \eqref{eq:BD-entropy}.
Assume moreover that initially $E_{0,\ep}$ and $E_{1,\ep}$, defined in Lemmas~\ref{lem:energy}-\ref{lem:BD-entropy}, are bounded uniformly with respect to $\ep$.
Then there exists a positive constant $C$ independent of $\ep$ and $T$ such that
\begin{equation}
\rho_\ep(t,x) \leq 1 - C\left(\dfrac{\ep}{1+\sqrt{T}}\right)^{\frac{1}{\beta-1}} =: \urho_\ep \qquad \forall \ t \in [0,T], \ x \in \mathbb{T} . 
\end{equation}
\end{lem}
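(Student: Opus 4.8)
The plan is to extract the upper bound on the density from the two a priori estimates already available, namely the energy estimate (Lemma~\ref{lem:energy}) and the BD-type entropy (Lemma~\ref{lem:BD-entropy}). The starting point is the observation made in the proof of Lemma~\ref{lem:BD-entropy}: the singular part $H^s_\ep(\rho) = \int_0^\rho p_\ep(r)\,dr$ behaves like $\ep/(1-\rho)^{\beta-1}$ as $\rho \to 1^-$, and the BD entropy controls $\|H_\ep(\rho_\ep)\|_{L^\infty_t L^1_x}$ by $C E_{1,\ep}(1+T)$. Since $H_\ep = H^s_\ep + H^n_\ep$ with $H^n_\ep$ absorbable into $H^s_\ep$ (as shown there), we get in particular a uniform-in-$\ep$ bound $\|H^s_\ep(\rho_\ep)\|_{L^\infty_t L^1_x} \leq C(1+T)$.

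Next I would turn this $L^1$-in-space control of $H^s_\ep(\rho_\ep)$ into an $L^\infty$ bound on $\rho_\ep$ by combining it with the spatial regularity of $\rho_\ep$ coming from the dissipation term $\|\sqrt{\rho_\ep}\,\partial_x(p_\ep(\rho_\ep)+\varphi_\ep(\rho_\ep))\|_{L^2_t L^2_x}$. Concretely, introduce an auxiliary function $G_\ep(\rho)$, a primitive chosen so that $\partial_x G_\ep(\rho_\ep) = \sqrt{\rho_\ep}\,\partial_x(p_\ep(\rho_\ep)+\varphi_\ep(\rho_\ep))$, i.e. $G_\ep'(\rho) = \sqrt{\rho}\,(p_\ep'(\rho)+\varphi_\ep'(\rho))$; then $\|\partial_x G_\ep(\rho_\ep)\|_{L^2_t L^2_x}$ is bounded uniformly. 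On the torus, for a.e.\ $t$ there is a point $x_0(t)$ where $H^s_\ep(\rho_\ep(t,x_0)) \leq \|H^s_\ep(\rho_\ep)(t)\|_{L^1_x}/|\T|$, hence where $\rho_\ep(t,x_0)$ is bounded away from $1$ in a quantitative, $\ep$-dependent way (because $H^s_\ep \sim \ep/(1-\rho)^{\beta-1}$, a bound $H^s_\ep \leq K$ forces $1-\rho \geq c(\ep/K)^{1/(\beta-1)}$). Then for any other $x$,
\[
|G_\ep(\rho_\ep(t,x)) - G_\ep(\rho_\ep(t,x_0))| \leq \int_\T |\partial_x G_\ep(\rho_\ep(t,y))|\,dy \leq |\T|^{1/2}\|\partial_x G_\ep(\rho_\ep)(t)\|_{L^2_x},
\]
which after integrating in time and using the uniform $L^2_t L^2_x$ bound gives control of $G_\ep(\rho_\ep)$ in, say, $L^2_t L^\infty_x$, hence pointwise control for a.e.\ $t$. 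The final step is bookkeeping on the singular powers: one checks that $G_\ep(\rho) \sim c\,\ep/(1-\rho)^{\beta-1/2}$ (or the relevant exponent) as $\rho\to1$, so that a uniform bound $G_\ep(\rho_\ep(t,x)) \leq C(1+\sqrt T)$ translates into $1-\rho_\ep(t,x) \geq C(\ep/(1+\sqrt T))^{1/(\beta-1)}$, which is exactly the claimed $\urho_\ep$; the $\sqrt T$ dependence appears because the time-integrated bound scales like $T$ inside a square root.

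The main obstacle I expect is making the interpolation between the $L^1$ control at the "good point" $x_0(t)$ and the $L^2$ gradient control yield a bound with the sharp scaling in $\ep$ — in particular tracking how the prefactor $\ep$ in $p_\ep$ and the singular exponent $\beta$ combine through the definition of $G_\ep$, and making sure the auxiliary $\varphi_\ep$ contribution (which is non-singular, $\alpha<1/2$) genuinely does not spoil the estimate. A secondary technical point is that the argument is pointwise-in-$t$ for a.e.\ $t$, so one must upgrade to all $t\in[0,T]$ using the continuity $\rho_\ep \in \mathcal C([0,T];H^3(\T)) \hookrightarrow \mathcal C([0,T]\times\T)$ from Theorem~\ref{thm:main1}; since the bound is time-uniform this upgrade is immediate. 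Care is also needed because $G_\ep$ depends on $\ep$, so "uniform in $\ep$" must refer to the bound on $G_\ep(\rho_\ep)$, with the $\ep$-scaling of $\urho_\ep$ extracted only at the very end by inverting $G_\ep$ near its singularity.
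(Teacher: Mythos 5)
Your overall plan — using $\|H^s_\ep(\rho_\ep)\|_{L^\infty_t L^1_x}$ together with a spatial-gradient control of a singular quantity of $\rho_\ep$ to extract an $L^\infty$ bound, and then inverting the singularity $H^s_\ep \sim \ep/(1-\rho)^{\beta-1}$ to get the $\ep^{1/(\beta-1)}$ scaling — is exactly the right idea, and the ``good point $x_0(t)$ plus fundamental theorem of calculus'' argument is a valid substitute for the Nash/Sobolev interpolation the paper uses. However, there is a genuine gap in how you control the gradient in time, and it is not repaired by the continuity remark in your last paragraph.

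The gap: you rely on the dissipation term of the BD entropy, i.e.\ the bound
\[
\bigl\|\sqrt{\rho_\ep}\,\partial_x\bigl(p_\ep(\rho_\ep)+\varphi_\ep(\rho_\ep)\bigr)\bigr\|_{L^2_t L^2_x} \leq C(1+T)^{1/2},
\]
which is an $L^2$-in-time bound. Feeding this into your argument only yields $L^2_t L^\infty_x$ control of $G_\ep(\rho_\ep)$, i.e.\ finiteness for a.e.\ $t$ but \emph{not} a bound uniform in $t$: nothing precludes a sequence of times $t_n$ along which $\|\partial_x G_\ep(t_n,\cdot)\|_{L^2_x}\to\infty$ while the time integral stays bounded, and hence $\rho_\ep(t_n,\cdot)$ could approach $1$. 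Appealing to $\rho_\ep\in\mathcal C([0,T]\times\T)$ does not fix this, because the issue is not about upgrading from a.e.\ $t$ to every $t$ under a uniform bound — it is that the bound you would derive is genuinely non-uniform in $t$.

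The paper's fix, and the step you are missing, is the algebraic identity $\partial_x(p_\ep(\rho_\ep)+\varphi_\ep(\rho_\ep)) = w_\ep - u_\ep$. Multiplying by $\sqrt{\rho_\ep}$ and using that $\sqrt{\rho_\ep}\,w_\ep$ is bounded in $L^\infty_t L^2_x$ (the first term on the left of the BD estimate) and $\sqrt{\rho_\ep}\,u_\ep$ is bounded in $L^\infty_t L^2_x$ (the energy estimate), one gets
\[
\bigl\|\sqrt{\rho_\ep}\,\partial_x\bigl(p_\ep(\rho_\ep)+\varphi_\ep(\rho_\ep)\bigr)\bigr\|_{L^\infty_t L^2_x}^2 \leq C\bigl(E_{1,\ep}(1+T)+E_{0,\ep}\bigr),
\]
i.e.\ $L^\infty$ in time, not merely $L^2$. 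After that your good-point argument works verbatim for every $t$. (Two minor further points: your asymptotic $G_\ep(\rho)\sim c\,\ep/(1-\rho)^{\beta-1/2}$ is off — with $G_\ep' = \sqrt{\rho}\,p_\ep'+\dots$ you get $G_\ep\sim\ep/(1-\rho)^\beta$; fortunately the final exponent still comes out as $1/(\beta-1)$ once you combine with the $\ep^{1/(\beta-1)}$ separation at $x_0$, but the bookkeeping needs to be redone. Also, to strip the non-singular $\varphi_\ep$ contribution from the gradient bound, note that $p_\ep'$ and $\varphi_\ep'$ are both nonnegative, so $|\sqrt{\rho_\ep}\,p_\ep'(\rho_\ep)\partial_x\rho_\ep| \leq |\sqrt{\rho_\ep}\,\partial_x(p_\ep+\varphi_\ep)|$ pointwise.)
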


\begin{proof}
First of all, the $L^\infty L^1$ bound~\eqref{eq:BD-entropy} on $H_\ep^s$ provides the upper bound
\[
\|\rho_\ep\|_{L^\infty_{t,x}} \leq 1.
\]
One can actually derive a more precise upper bound on the density thanks to the previous estimates.
First Nash' inequality reads:
\[
\|H_\ep^s(\rho_\ep(t,\cdot))\|_{L^2_x} 
\leq C \|H_\ep^s(\rho_\ep(t,\cdot))\|_{L^1_x}^{2/3} \|\partial_x H_\ep^s(\rho_\ep(t,\cdot))\|_{L^2_x}^{1/3} + C \|H_\ep^s(\rho_\ep(t,\cdot))\|_{L^1_x},
\]
and we have by Sobolev inequality
\begin{align}\label{eq:sob-Hep}
\|H_\ep^s(\rho_\ep(t,\cdot))\|_{L^\infty_x}
& \leq \|H_\ep^s(\rho_\ep(t,\cdot))\|_{L^2_x}^{1/2} \|\partial_x H_\ep^s(\rho_\ep(t,\cdot))\|_{L^2_x}^{1/2} + \|H_\ep^s(\rho_\ep(t,\cdot))\|_{L^2_x} \nonumber\\
& \leq C \Bigg(\|H_\ep^s(\rho_\ep(t,\cdot))\|_{L^1_x}^{1/3} \|\partial_x H_\ep^s(\rho_\ep(t,\cdot))\|_{L^2_x}^{2/3} 
				+ \|H_\ep^s(\rho_\ep(t,\cdot))\|_{L^1_x}^{1/2} \|\partial_x H_\ep^s(\rho_\ep(t,\cdot))\|_{L^2_x}^{1/2} \nonumber\\
& \qquad +  \|H_\ep^s(\rho_\ep(t,\cdot))\|_{L^1_x}^{2/3} \|\partial_x H_\ep^s(\rho_\ep(t,\cdot))\|_{L^2_x}^{1/3} +  \|H_\ep^s(\rho_\ep(t,\cdot))\|_{L^1_x} \Bigg) .
\end{align}
Now, we observe that the control of the energy~\eqref{eq:energy} together with the estimate~\eqref{eq:BD-entropy} provide the bound
\begin{equation}
\|\sqrt{\rho_\ep}p'_\ep(\rho_\ep) \partial_x \rho_\ep\|_{L^\infty_t L^2_x}^2
\leq C\big(E_{1,\ep}(1+T) + E_{0,\ep}\big),
\end{equation}
and on the other hand
\[
\|\sqrt{\rho_\ep}p'_\ep(\rho_\ep) \partial_x \rho_\ep\|_{L^\infty_t L^2_x}^2
= \left\|\sqrt{\rho_\ep}\dfrac{p_\ep'(\rho_\ep)}{p_\ep(\rho_\ep)}  \partial_x H^s_\ep(\rho_\ep) \right\|_{L^\infty_t L^2_x}^2.
\]
Since
\[
p'_\ep(\rho_\ep) 
= \ep \dfrac{\rho_\ep^{\gamma-1}}{(1-\rho_\ep)^{\beta+1}} \big[\gamma(1-\rho_\ep) + \beta \rho_\ep\big],
\]
we deduce that
\begin{align*}
\sqrt{\rho_\ep}\dfrac{p_\ep'(\rho_\ep)}{p_\ep(\rho_\ep)} 
& = \dfrac{\big[\gamma(1-\rho_\ep) + \beta \rho_\ep\big]\rho_\ep^{-1/2}}{1-\rho_\ep} \\
& = \left(\gamma + \beta \dfrac{\rho_\ep}{1-\rho_\ep} \right)\rho_\ep^{-1/2} \\
& \geq \gamma \|\rho_\ep\|_{L^\infty}^{-1/2}.
\end{align*}
Therefore
\begin{align*}
\|\partial_x H_\ep^s(\rho_\ep) \|_{L^\infty_t L^2_x} 
& \leq \gamma^{-1} \|\rho_\ep\|_{L^\infty_{t,x}}^{1/2} \|\sqrt{\rho_\ep}p'_\ep \partial_x \rho_\ep\|_{L^\infty_t L^2_x}\\ 
& \leq C \|\rho_\ep\|_{L^\infty_{t,x}}^{1/2} \sqrt{\big(E_{1,\ep}(1+T) + E_{0,\ep}\big)} \\
& \leq C \sqrt{\big(E_{1,\ep}(1+T) + E_{0,\ep}\big)},
\end{align*}
and so, coming back to~\eqref{eq:sob-Hep}:
\begin{equation}
\|H_\ep^s(\rho_\ep) \|_{L^\infty_{t,x}} \leq C \sqrt{\big(E_{1,\ep}(1+T) + E_{0,\ep}\big)}.
\end{equation}
Finally, it is easy to show that this latter bound yields
\[
\|\rho_\ep\|_{L^\infty_{t,x}} \leq 1 - C\left(\dfrac{\ep}{1+\sqrt{T}}\right)^{\frac{1}{\beta-1}},
\]
for some constant $C$ independent of $\ep$ and $T$, provided that $E_{1,\ep}$ and $E_{0,\ep}$ are bounded uniformly with respect to $\ep$.
\end{proof}

\begin{rmk}\label{rmk:deg-piep}
This upper bound ensures that $\rho_\ep$ never reaches $1$ at $\ep > 0$ fixed in finite time, and so the pressure remains bounded by a constant which depends a-priori on $\ep$. Indeed, we have
\begin{equation}\label{eq:deg-pep}
p_\ep(\rho_\ep(t,x))
\leq \dfrac{\ep}{(1 - \|\rho_\ep\|_{\infty})^\beta}
\leq C(1+\sqrt{T})^{\frac{\beta}{\beta-1}} \ep^{-\frac{1}{\beta-1}} \quad \forall \ t \in [0, T], \ x \in \mathbb{T}.
\end{equation}
Note that the same type of estimate holds for the potential $\pi_\ep(\rho_\ep)$.
\end{rmk}

\bigskip

In the next step we progress with the lower bound on the density. This is the only place where the artificial approximation term $\vpe$ matters.

\begin{lem}[Lower bound on the density]\label{lem:low_rho}
Let the assumptions of Lemma \ref{lem:up_rho} be satisfied. Then for a constant $C>0$ independent of $\ep$ and $T$ we have:
\begin{equation}\label{eq:low-bd-rho}
\left\|\dfrac{1}{\rho_\ep}\right\|_{L^\infty_{t,x}} 
\leq C \ep^{-\frac{2}{1-2\alpha}} \big(1+T\big)^{\frac{1}{1 - 2\alpha}} =: \dfrac{1}{\lrho_\ep}.
\end{equation}
\end{lem}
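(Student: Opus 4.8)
The plan is to exploit the artificial term $\vpe$, whose derivative $\vpe'(\re)=\ep\,\re^{\alpha-2}$ is a \emph{negative} power of $\re$. Since $\alpha<\tfrac12$ and, on $(0,1)$, $\pe'(\re)\ge 0$ while $\vpe'(\re)>0$, the definition $H_\ep'(\re)=\pe(\re)+\vpe(\re)$ yields the pointwise inequality
\[
\big|\px\!\big(\re^{\alpha-1/2}\big)\big|=\big(\tfrac12-\alpha\big)\re^{\alpha-3/2}|\px\re|\le\frac{\tfrac12-\alpha}{\ep}\,\sqrt{\re}\,\big|\px\!\big(\pe(\re)+\vpe(\re)\big)\big|.
\]
Hence a lower bound on the density is equivalent to an $L^\infty_{t,x}$ bound on the negative power $\re^{\alpha-1/2}$, and by the above it suffices to control $\sqrt{\re}\,\px\!\big(\pe(\re)+\vpe(\re)\big)=\sqrt{\re}\,\px H_\ep'(\re)$ in $L^\infty_tL^2_x$.

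First I would upgrade the Bresch--Desjardins bound of Lemma~\ref{lem:BD-entropy} from $L^2_t$ to $L^\infty_t$: writing $\we=\ue+\px H_\ep'(\re)$ as in \eqref{w_ep}, one has $\sqrt{\re}\,\px H_\ep'(\re)=\sqrt{\re}\,\we-\sqrt{\re}\,\ue$, and since both $\|\sqrt{\re}\,\we\|_{L^\infty_tL^2_x}$ (from \eqref{eq:BD-entropy}) and $\|\sqrt{\re}\,\ue\|_{L^\infty_tL^2_x}$ (from \eqref{eq:energy}) are bounded by $C\sqrt{1+T}$ with $C$ independent of $\ep$ — here using that $E_{0,\ep}$ and $E_{1,\ep}$ are uniformly bounded by the hypotheses of Lemma~\ref{lem:up_rho} — we obtain $\|\sqrt{\re}\,\px H_\ep'(\re)\|_{L^\infty_tL^2_x}\le C\sqrt{1+T}$, and therefore $\big\|\px\!\big(\re^{\alpha-1/2}\big)\big\|_{L^\infty_tL^2_x}\le C\,\ep^{-1}\sqrt{1+T}$.

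Then I would turn this into a pointwise lower bound using conservation of mass. For each fixed $t$, continuity of $\re(t,\cdot)$ (it is $H^3$, hence $C^2$, in $x$) together with $\int_\T\re(t,x)\,dx=M^0_\ep\ge\underline{M}^0$ provides a point $x_0(t)$ with $\re(t,x_0(t))\ge\underline{M}^0/|\T|$, hence $\re(t,x_0(t))^{\alpha-1/2}\le(\underline{M}^0/|\T|)^{\alpha-1/2}=:C_\ast$ (recall $\alpha-\tfrac12<0$). Integrating $\px(\re^{\alpha-1/2})$ over the periodic interval and applying Cauchy--Schwarz,
\[
\re(t,x)^{\alpha-1/2}\le C_\ast+\int_\T\big|\px\!\big(\re^{\alpha-1/2}\big)\big|\,dx\le C_\ast+|\T|^{1/2}\big\|\px\!\big(\re^{\alpha-1/2}\big)(t,\cdot)\big\|_{L^2_x}\le\frac{C(1+\sqrt T)}{\ep},
\]
where in the last step $C_\ast$ is absorbed (one may assume $\ep\le1$). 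Raising to the power $\tfrac{1}{1/2-\alpha}=\tfrac{2}{1-2\alpha}$ and using $(1+\sqrt T)^2\le 2(1+T)$ then yields $\big\|1/\re\big\|_{L^\infty_{t,x}}\le C\,\ep^{-\frac{2}{1-2\alpha}}(1+T)^{\frac{1}{1-2\alpha}}$, which is the assertion.

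The only delicate point is the $L^\infty_t$ — as opposed to merely $L^2_t$ — control of $\sqrt{\re}\,\px H_\ep'(\re)$: this is what makes the lower bound hold for every time rather than in a time-averaged sense, and it is supplied precisely by the pointwise-in-time energy and BD estimates controlling $\sqrt{\re}\,\ue$ and $\sqrt{\re}\,\we$ in $L^\infty_tL^2_x$. Everything else is a one-dimensional Sobolev argument. As already noted, this is the unique step where the approximation $\vpe$ enters: with $\pe$ alone one would only control $\px(\re^{\gamma+1/2})$, a \emph{positive} power of $\re$, which carries no information near the vacuum.
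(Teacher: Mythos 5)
Your proof is correct and follows essentially the same route as the paper: combine the BD entropy bound on $\sqrt{\rho_\ep}w_\ep$ with the energy bound on $\sqrt{\rho_\ep}u_\ep$ to control $\partial_x(\ep\rho_\ep^{\alpha-1/2})$ in $L^\infty_tL^2_x$, then integrate from a point where $\rho_\ep$ is bounded below via conservation of mass. You are slightly more explicit than the paper in isolating the $\varphi_\ep$ contribution via the monotonicity of $p_\ep$ and $\varphi_\ep$, and in spelling out the $w_\ep-u_\ep$ decomposition that upgrades the $L^2_t$ BD estimate to an $L^\infty_t$ one, but the argument and the resulting exponents coincide with those in Lemma~\ref{lem:low_rho}.
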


\begin{proof}
The estimate \eqref{eq:BD-entropy} ensures that $\sqrt{\rho_\ep} \partial_x \varphi_\ep(\rho_\ep)$ is controlled uniformly w.r.t. $\ep$ in $L^\infty L^2$, namely there exists $C > 0$ independent of $\ep$ and $T$ such that:
\[
\|\partial_x(\ep \rho_\ep^{\alpha-\frac{1}{2}}) \|_{L^\infty_t L^2_x}^2 
\leq C\big(E_{1,\ep}(1+T) + E_{0,\ep}\big).
\]
One the other hand, we have by conservation of mass
\[
\int_{\T} \rho_\ep(t,x) dx = \int_{\T} \rho_\ep^0(x) dx = M^0_\ep
\]
and by hypothesis we have
\[
0 < \underline{M}^0 \leq M^0_\ep < 1,
\]
for some $\underline{M}^0$ independent of $\ep$.
Therefore, for any time $t$, there exists some $\bar x(t) = \bar x (t,\ep) \in \Omega$ such that
\[
\rho_\ep (t,\bar x(t)) \geq \dfrac{\underline{M}^0}{|\T|}.
\]
For all $t\in(0,T]$ and $x \in \Omega$ we can write
\[
\ep \big( \rho_\ep(t,x) \big)^{\alpha-1/2} - \ep \big( \rho_\ep(t,\bar x(t)) \big)^{\alpha-1/2}
= \int_{\bar x(t)}^x \ep \partial_x \big( \rho_\ep(t,x) \big)^{\alpha-1/2},
\]
so that
\begin{align*}
\ep \big( \rho_\ep(t,x) \big)^{\alpha-1/2}
& \leq \ep\big( \rho_\ep(t,\bar x(t)) \big)^{\alpha-1/2}
+ |x-\bar x(t)|^{1/2} \|\partial_x(\ep \rho_\ep^{\alpha-\frac{1}{2}}) \|_{L^2_x} \\
& \leq \ep \left(\dfrac{\underline{M}^0}{|\T|}\right)^{\alpha-1/2} + |\T|^{1/2} \|\partial_x(\ep \rho_\ep^{\alpha-\frac{1}{2}}) \|_{L^2_x}.
\end{align*}
Hence, $\ep  \big( \rho_\ep(t,x) \big)^{\alpha-1/2} \leq C\sqrt{1+T}$ and finally
\[
\rho_\ep^{-1}(t,x) \leq C \ep^{-{\frac{1}{1/2 -\alpha}}}\big(1+T\big)^{\frac{1}{1 -2\alpha}} \leq C \ep^{-2} \big(1+T\big)^{\frac{1}{1 - 2\alpha}} \quad \forall \ t > 0, \ x \in \T.
\]
\end{proof}
\begin{rmk}
For sake of simplicity, we will sometimes estimate $\underline{\rho}_\ep$ as follows
\[
\underline\rho_\ep^{-1} = C \ep^{-{\frac{1}{1/2 -\alpha}}}\big(1+T\big)^{\frac{1}{1 -2\alpha}} \leq C_T \ep^{-2}.
\]
\end{rmk}

\subsection{Further regularity estimates}

\paragraph{Control of the singular diffusion}
In the next step we provide the estimates of 
\begin{equation}\label{df:Vep}
V_\ep := \lambda_\ep(\rho_\ep) \partial_x u_\ep,
\end{equation}
following the reasoning of Constantin et al.~\cite{constantin2020} ($V_\ep$ corresponds to what is called {\emph{active potential}} in~\cite{constantin2020}).

Let us first check that at the level of regular solutions what is the equation satisfied by $V_\ep$.
\begin{lem} \label{prop:Ve}
The variable $V_\ep$ satisfies
\begin{align}\label{eq:Vep}
\partial_t V_\ep + \left(u_\ep + \dfrac{\lambda_\ep(\rho_\ep)}{\rho_\ep^2}\partial_x \rho_\ep\right) \partial_x V_\ep - \dfrac{\lambda_\ep(\rho_\ep)}{\rho_\ep} \partial^2_x V_\ep 
= - \dfrac{\big( \lambda'_\ep(\rho_\ep) \rho_\ep + \lambda_\ep(\rho_\ep)\big)}{\big(\lambda_\ep(\rho_\ep)\big)^2} V_\ep^2.
\end{align}
\end{lem}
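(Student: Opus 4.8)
The plan is to derive the evolution equation for $V_\ep = \lambda_\ep(\re)\px u_\ep$ directly from the two equations of the system \eqref{eq:AW-2}, treating all computations at the level of the smooth solutions guaranteed by Theorem~\ref{thm:main1}. The first step is to obtain a transport-type equation for $\px u_\ep$ alone. Differentiating the momentum equation \eqref{eq:AW-2_mom} is awkward because of the conservative form, so instead I would first rewrite \eqref{eq:AW-2_mom} in nonconservative form using \eqref{eq:AW-2-cont}: expanding $\pt(\re u_\ep)+\px(\re u_\ep^2)=\re(\pt u_\ep + u_\ep\px u_\ep)$ gives
\[
\re(\pt u_\ep + u_\ep \px u_\ep) = \px\big(\lambda_\ep(\re)\px u_\ep\big) = \px V_\ep.
\]
This already identifies the ``forcing'' of the velocity as $\px V_\ep/\re$, which is the source of the $\lambda_\ep/\re$ diffusion coefficient that will appear.

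Next I would compute $\pt V_\ep = \lambda_\ep'(\re)\pt\re\,\px u_\ep + \lambda_\ep(\re)\,\pt(\px u_\ep)$. For the first term, substitute $\pt\re = -\px(\re u_\ep) = -u_\ep\px\re - \re\px u_\ep$ from \eqref{eq:AW-2-cont}. For the second term, differentiate the nonconservative momentum equation in $x$: from $\pt u_\ep + u_\ep\px u_\ep = \px V_\ep/\re$ one gets
\[
\pt(\px u_\ep) + u_\ep\px(\px u_\ep) + (\px u_\ep)^2 = \px\!\left(\frac{\px V_\ep}{\re}\right) = \frac{\px^2 V_\ep}{\re} - \frac{\px\re}{\re^2}\px V_\ep.
\]
Multiplying this by $\lambda_\ep(\re)$ and recombining with the $\lambda_\ep'(\re)\pt\re\,\px u_\ep$ contribution, the convective terms assemble into $u_\ep\px V_\ep$ (using $\px V_\ep = \lambda_\ep'(\re)\px\re\,\px u_\ep + \lambda_\ep(\re)\px^2 u_\ep$), the $\px^2 V_\ep$ term comes with coefficient $\lambda_\ep(\re)/\re$, the mixed first-order term $\tfrac{\lambda_\ep(\re)}{\re^2}\px\re\,\px V_\ep$ appears on the transport side, and the remaining algebraic terms are all proportional to $(\px u_\ep)^2$; rewriting $(\px u_\ep)^2 = V_\ep^2/\lambda_\ep(\re)^2$ and collecting the coefficients $\lambda_\ep'(\re)\re + \lambda_\ep(\re)$ yields exactly the right-hand side $-\dfrac{\lambda_\ep'(\re)\re + \lambda_\ep(\re)}{\lambda_\ep(\re)^2}V_\ep^2$.

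The computation is entirely routine once the nonconservative reformulation is in place; the only point requiring a little care is the bookkeeping of the first-order-in-$x$ terms, making sure the $-\px\re/\re^2 \cdot \px V_\ep$ piece coming from $\px(\px V_\ep/\re)$ combines correctly with the $\lambda_\ep'(\re)\px\re$ piece hidden inside $\lambda_\ep(\re)\px(\px u_\ep)$ to produce precisely the stated transport coefficient $u_\ep + \lambda_\ep(\re)\px\re/\re^2$ and nothing extra. So the main (minor) obstacle is just this algebraic matching, and the fact that all manipulations are legitimate because $\re$ stays strictly positive and bounded away from $1$ and $u_\ep \in \mathcal{C}([0,T];H^3)\cap L^2(0,T;H^4)$, so every derivative written above is a genuine function, not merely a distribution.
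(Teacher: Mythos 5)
Your computation is correct and proceeds along essentially the same lines as the paper's proof: both rewrite the momentum equation in nonconservative form $\partial_t u_\ep + u_\ep\partial_x u_\ep = \partial_x V_\ep/\rho_\ep$, differentiate in $x$, use the continuity equation (equivalently, multiply it by $\lambda_\ep'(\rho_\ep)$) to handle $\partial_t\lambda_\ep(\rho_\ep)$, and then recombine using $\partial_x V_\ep = \lambda_\ep'(\rho_\ep)\partial_x\rho_\ep\,\partial_x u_\ep + \lambda_\ep(\rho_\ep)\partial_x^2 u_\ep$ and $(\partial_x u_\ep)^2 = V_\ep^2/\lambda_\ep(\rho_\ep)^2$. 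The only difference is presentational (the paper keeps some terms in divergence form a step longer before expanding), so the two derivations are the same argument.
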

\begin{proof}
Dividing \eqref{eq:AW-2} by $\re>0$ and using the continuity equation we get
\eq{
\pt\ue+\ue\px\ue-\frac{1}{\re}\px V_\ep=0,}
and taking the space derivative
\eq{
\pt\px\ue+\px(\ue\px\ue)-\px\lr{\frac{1}{\re}\px V_\ep}=0.}
On the other hand multiplying \eqref{eq:AW-2-cont} by $\lambda_\ep'(\re)$ we get
\eq{
\pt\lambda_\ep(\re)+\px\lambda_\ep(\re)\ue+\lambda_\ep'(\re)\re\px\ue=0.}
Hence
\eq{
\pt V_\ep=&\lambda_\ep(\re)\pt\px\ue+\px\ue\pt\lambda_\ep(\re)\\
=&-\lambda_\ep(\re)\px\lr{\ue\px\ue}+\lambda_\ep(\re)\px\lr{\frac{1}{\re}\px V_\ep}-\px\lambda_\ep(\re)\ue\px\ue-\lambda_\ep'(\re)\re(\px\ue)^2\\
=&-\px(\ue V_\ep)+\px\lr{\frac{\lambda_\ep(\re)}{\re}\px V_\ep}-\lambda_\ep'(\re)\re\lr{\frac{V_\ep}{\lambda_\ep(\re)}}^2-\frac{\px \lambda_\ep(\re)}{\re}\px V_\ep\\
=&-\ue\px V_\ep-\frac{1}{\lambda_\ep(\re)} V_\ep^2+\px\lr{\frac{\lambda_\ep(\re)}{\re}\px V_\ep}-\lambda_\ep'(\re)\re\lr{\frac{V_\ep}{\lambda_\ep(\re)}}^2-\frac{\px \lambda_\ep(\re)}{\re}\px V_\ep,
}
from which \eqref{eq:Vep} follows.
\end{proof}

\begin{lem}\label{lem:V-LiL2} We have
\eq{\label{eq:est_V}
& \|V_\ep \|_{L^\infty_t L^2_x}^2 + \ep \|\partial_x V_\ep\|_{L^2_t L^2_x}^2 \\
& \leq  C\|V_\ep(0)\|_{L^2_x}^2 \exp \Bigg[
T \Big( \ep^{-3} \|R\|_{L^\infty_t L^2_x}^4 + \dfrac{\ep^{-1}}{\sqrt{\lrho_\ep}}\|\sqrt{\rho_\ep} u_\ep\|_{L^\infty_t L^2_x}^2\Big) + \dfrac{\ep^{\red{-3}}}{\lrho_\ep^{\frac{8}{3}\alpha} (1-\urho_\ep)^{\frac{4}{3}(\beta+2)}} \|V_\ep\|_{L^2_t L^2_x}^2 \Bigg] 
}
where 
\[
\|R\|_{L^\infty_t L^2_x}
:= {\left\| \dfrac{\partial_x \lambda_\ep(\rho_\ep)}{\rho_\ep} \right\|_{L^\infty_t L^2_x}}
\leq \dfrac{C}{\sqrt{\lrho_\ep}(1-\urho_\ep)} (1+ \sqrt{T}).
\]
\end{lem}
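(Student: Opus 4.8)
The plan is to obtain the estimate for $V_\ep$ as an energy estimate on equation~\eqref{eq:Vep}, treating it as a convection--diffusion equation with a quadratic sink. First I would multiply~\eqref{eq:Vep} by $V_\ep$ and integrate over $\T$. The diffusion term $-\frac{\lambda_\ep(\re)}{\re}\partial_x^2 V_\ep$, after integration by parts, produces $\int_\T \frac{\lambda_\ep(\re)}{\re}(\partial_x V_\ep)^2\,dx$ plus a commutator term $\int_\T \partial_x\big(\frac{\lambda_\ep(\re)}{\re}\big)\partial_x V_\ep\,V_\ep\,dx$; the convection term contributes, after integration by parts, $-\frac12\int_\T \partial_x\big(u_\ep + \frac{\lambda_\ep(\re)}{\re^2}\partial_x\re\big) V_\ep^2\,dx$. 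The right-hand side quadratic term gives $-\int_\T \frac{(\lambda_\ep'(\re)\re + \lambda_\ep(\re))}{(\lambda_\ep(\re))^2}V_\ep^3\,dx$, which is the genuinely dangerous term: it is cubic in $V_\ep$ and not sign-definite.

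The key structural point is that the good diffusion term $\int_\T \frac{\lambda_\ep(\re)}{\re}(\partial_x V_\ep)^2\,dx$ should, after using $\lambda_\ep(\re)/\re \gtrsim \ep$ (from the $\varphi_\ep$ contribution $\re^2\varphi_\ep'(\re) = \ep\re^{\alpha+1}$, bounded below using $\lrho_\ep$ and $\urho_\ep$), dominate a piece of the form $\ep\|\partial_x V_\ep\|_{L^2_x}^2$, which both yields the second term on the left of~\eqref{eq:est_V} and allows the cubic term to be absorbed. Concretely I would estimate the cubic term by $\|V_\ep\|_{L^\infty_x}\int_\T \frac{|\lambda_\ep'(\re)\re + \lambda_\ep(\re)|}{(\lambda_\ep(\re))^2}V_\ep^2\,dx$, bound the coefficient by a power of $\ep$ times powers of $\lrho_\ep^{-1}$ and $(1-\urho_\ep)^{-1}$ using the explicit forms of $p_\ep,\varphi_\ep$, then use the one-dimensional Gagliardo--Nirenberg / Agmon inequality $\|V_\ep\|_{L^\infty_x}\leq C\|V_\ep\|_{L^2_x}^{1/2}\|\partial_x V_\ep\|_{L^2_x}^{1/2} + C\|V_\ep\|_{L^2_x}$, and finally a Young inequality of the form $ab^{1/2} \leq \delta\, \ep\, b + C_\delta \ep^{-1/3} a^{4/3}\cdots$ to split off $\tfrac12\ep\|\partial_x V_\ep\|_{L^2_x}^2$ for absorption; the leftover is $\ep^{-3}\lrho_\ep^{-8\alpha/3}(1-\urho_\ep)^{-4(\beta+2)/3}\|V_\ep\|_{L^2_x}^2\|V_\ep\|_{L^2_x}^2$-type, which after using $\|V_\ep\|_{L^2_x}^2$ as the Grönwall variable explains the $\|V_\ep\|_{L^2_tL^2_x}^2$ exponent. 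The commutator and convection terms are handled similarly: $\partial_x\big(\frac{\lambda_\ep(\re)}{\re}\big)$ and $\partial_x\big(\frac{\lambda_\ep(\re)}{\re^2}\partial_x\re\big)$ are controlled in $L^\infty_tL^2_x$ by the BD bound~\eqref{eq:BD-entropy} together with the density bounds (this is the $\|R\|_{L^\infty_tL^2_x}$ quantity), and $\partial_x u_\ep = V_\ep/\lambda_\ep(\re)$ so $\int \partial_x u_\ep V_\ep^2$ is again cubic and absorbed as above, while the $u_\ep$-dependent piece $\frac{\ep^{-1}}{\sqrt{\lrho_\ep}}\|\sqrt{\re}u_\ep\|_{L^\infty_tL^2_x}^2$ comes from estimating $\int \partial_x u_\ep\, V_\ep^2$ or the convective commutator after a Hölder split pulling out $\|u_\ep\|$ against $V_\ep$-norms via the energy bound.

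The estimate on $\|R\|_{L^\infty_tL^2_x}$ itself is a direct computation: $\partial_x\lambda_\ep(\re)/\re$ involves $\partial_x\re$ times a rational function of $\re$ with the singular factor $(1-\re)^{-(\beta+2)}$-ish, and one rewrites $\partial_x\re$ in terms of $\partial_x H_\ep^s(\re)$ or $\sqrt{\re}p_\ep'(\re)\partial_x\re$, both controlled by~\eqref{eq:BD-entropy}; the powers of $\lrho_\ep^{-1}$ and $(1-\urho_\ep)^{-1}$ then come out of bounding the remaining rational function using Lemmas~\ref{lem:up_rho}--\ref{lem:low_rho}, and the $(1+\sqrt T)$ is the square root of the $(1+T)$ in~\eqref{eq:BD-entropy}. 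The main obstacle is bookkeeping the exact powers of $\ep$, $\lrho_\ep$, and $(1-\urho_\ep)$ through the interpolation and Young steps so that the absorbed term is genuinely $\tfrac12\ep\|\partial_x V_\ep\|_{L^2_x}^2$ and the surviving constants match~\eqref{eq:est_V}; the analytic mechanism (Grönwall after absorption) is standard, but one must be careful that the lower bound $\lambda_\ep(\re)/\re \geq c\,\ep\,\lrho_\ep^{\,\alpha+1}$, or the cruder $\geq c\,\ep$ up to $T$-dependent constants, is the only place the artificial viscosity $\varphi_\ep$ is used, exactly as announced. I would then close with Grönwall's lemma in $t$ to obtain~\eqref{eq:est_V}.
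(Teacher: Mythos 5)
Your proposal follows the paper's proof essentially step by step: multiply \eqref{eq:Vep} by $V_\ep$, integrate over $\T$, use $\lambda_\ep(\rho_\ep)/\rho_\ep \geq \ep$ to split off the good term $\ep\|\partial_x V_\ep\|_{L^2_x}^2$, control the commutator, convection, and cubic terms via the 1-D Agmon--Gagliardo--Nirenberg inequality plus Young (pulling out $\tfrac{\bar C\ep}{4}\|\partial_x V_\ep\|_{L^2_x}^2$ at each step), estimate $\|R\|_{L^\infty_tL^2_x}$ from \eqref{eq:BD-entropy} and the density bounds, and close with Gr\"onwall. The only slips are cosmetic: $\rho^2\varphi'_\ep(\rho)=\ep\rho^{\alpha}$ (not $\ep\rho^{\alpha+1}$), and the lower bound is simply $\lambda_\ep(\rho_\ep)/\rho_\ep \geq \ep\rho_\ep^{\alpha-1}\geq\ep$ because $\alpha<1$ and $\rho_\ep<1$ --- no $\lrho_\ep$-dependence is needed there --- which does not affect the structure of the argument.
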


\begin{proof}
We multiply~\eqref{eq:Vep} by $V_\ep$ and integrate with respect to space to get
\begin{align*}
& \dfrac{d}{dt} \int \dfrac{V_\ep^2}{2} + \int \dfrac{\lambda_\ep(\rho_\ep)}{\rho_\ep} (\partial_x V_\ep)^2 \\
& = - \int \partial_x \left(\dfrac{\lambda_\ep(\rho_\ep)}{\rho_\ep}\right) V_\ep \partial_x V_\ep 
 - \int  \dfrac{\lambda_\ep(\rho_\ep)}{\rho_\ep^2}\partial_x \rho_\ep \partial_x V_\ep V_\ep 
 - \int u_\ep \partial_x V_\ep V_\ep \\
& \qquad - \int \dfrac{\big( \lambda'_\ep(\rho_\ep) \rho_\ep  + \lambda_\ep(\rho_\ep)\big)}{\big(\lambda_\ep(\rho_\ep)\big)^2} V_\ep^3 \\
& =- \int  \dfrac{\px\lambda_\ep(\rho_\ep)}{\rho_\ep}\partial_x \rho_\ep \partial_x V_\ep V_\ep 
 - \int u_\ep \partial_x V_\ep V_\ep - \int \dfrac{\big( \lambda'_\ep(\rho_\ep) \rho_\ep  + \lambda_\ep(\rho_\ep)\big)}{\big(\lambda_\ep(\rho_\ep)\big)^2} V_\ep^3\\
& = I_1 +I_2 + I_3.
\end{align*}
First, let us observe that
\begin{equation}
 \dfrac{\lambda_\ep(\rho_\ep)}{\rho_\ep} 
 = \rho_\ep p'_\ep(\rho_\ep) + \ep \rho^{\alpha-1} 
 \geq \ep
 \end{equation}
 \emph{Control of $I_1$}. We have
\begin{align*}
|I_1 |
& \leq \int  \underset{:= R}{\underbrace{\left|\dfrac{\partial_x \lambda_\ep(\rho_\ep)}{\rho_\ep}\right|  }} |\partial_x V_\ep| |V_\ep| \leq \|R\|_{L^2_x} \|\partial_x V_\ep\|_{L^2_x} \|V_\ep\|_{L^\infty_x}.
\end{align*}
Let us estimate $\|R\|_{L^\infty_t L^2_x}$. We denote 
\[
\lambda_\ep^s = \rho_\ep^2 p'_\ep(\rho_\ep), \qquad 
\lambda^n_\ep = \ep \rho_\ep^\alpha,
\]
We have one the one hand
\begin{align*}
\left\|\dfrac{\partial_x \lambda_\ep^s}{\rho_\ep} \right\|_{L^\infty_t L^2_x}
& = \left\| \dfrac{2 \rho_\ep p'_\ep (\rho_\ep) + \rho_\ep^2 p''_\ep(\rho_\ep)}{\rho_\ep} \partial_x \rho_\ep \right\|_{L^\infty_t L^2_x} \\
& \leq \left\|\dfrac{2 \sqrt{\rho_\ep} + \rho_\ep^{3/2} p''_\ep(\rho_\ep)/ p'_\ep(\rho_\ep)}{\rho_\ep}\right\|_{L^\infty} \|\sqrt{\rho_\ep} p'_\ep(\rho_\ep) \partial_x \rho_\ep \|_{L^\infty_t L^2_x}\\
& \leq C \left( \dfrac{1}{\sqrt{\lrho_\ep}} + \dfrac{1}{\sqrt{\lrho_\ep} (1- \urho_\ep)} \right)\|\sqrt{\rho_\ep} p'_\ep(\rho_\ep) \partial_x \rho_\ep \|_{L^\infty_t L^2_x},
\end{align*}
and
\begin{align*}
\left\|\dfrac{\partial_x \lambda_\ep^n}{\rho_\ep} \right\|_{L^\infty_t L^2_x}
& = \left\| \ep \alpha \rho_\ep^{\alpha-2} \partial_x \rho_\ep \right\|_{L^\infty_t L^2_x} \\
& \leq C\lrho_\ep^{-1/2}  \left\| \ep \rho_\ep^{\alpha - 3/2} \partial_x \rho_\ep \right\|_{L^\infty_t L^2_x}.
\end{align*}
so, using estimate \eqref{eq:BD-entropy}, we get
\begin{equation}\label{eq:estim-A}
\|R\|_{L^\infty_t L^2_x}
\leq C \dfrac{1 + \sqrt{T}}{\sqrt{\lrho_\ep}(1- \urho_\ep)}.
\end{equation}
Coming back to the control of $I_1$, we have:
\begin{align}\label{eq:I1-I_3}
|I_1|
& \leq \|V_\ep\|_{L^\infty_x} \|\partial_x V_\ep\|_{L^2_x} \|R\|_{L^2_x} \nonumber \\
& \leq C \|\partial_x V_\ep\|_{L^2_x} \Big(\|\partial_x V_\ep\|^{1/2} \|V_\ep\|_{L^2_x}^{1/2} + \|V_\ep\|_{L^2_x}\Big) \|R\|_{L^2_x} \nonumber\\
& \leq \dfrac{\bar{C} \ep}{4} \|\partial_x V_\ep\|_{L^2_x}^2 
    + C \Big( \ep^{-3}\|V_\ep\|_{L^2_x}^2 \|R\|_{L^2_x}^4 + \ep^{-1} \|V_\ep\|_{L^2_x}^2 \|R\|_{L^2_x}^2 \Big).
\end{align}
\emph{Control of $I_2$.}
\begin{align*}
|I_2| 
& \leq \int |u_\ep| |\partial_x V_\ep| |V_\ep| \\
& \leq \|\partial_x V_\ep \|_{L^2_x} \|V_\ep \|_{L^2_x} \|u_\ep \|_{L^\infty_x} \\
& \leq \dfrac{\bar{C}\ep}{4} \|\partial_x V_\ep\|_{L^2_x}^2 + C \ep^{-1} \|V_\ep\|_{L^2_x}^2 \|u_\ep\|_{H^1_x}^2,
\end{align*}
with
\eqh{
\|u_\ep\|_{H^1_x}^2 
\leq \|u_\ep\|_{L^2_x}^2 + \left\|\dfrac{1}{\lambda_\ep(\rho_\ep)}\right\|_{L^\infty}^2 \|V_\ep\|_{L^2_x}^2 
\leq \lrho_\ep^{-1/2} \|\sqrt{\rho_\ep} u_\ep\|_{L^2_x}^2 + \ep^{-2} \lrho_\ep^{-2\alpha} \|V_\ep\|_{L^2_x}^2\\
\leq \lrho_\ep^{-1/2} \|\sqrt{\rho_\ep} u_\ep\|_{L^2_x}^2 + \ep^{-2} \lrho_\ep^{-2\alpha} \|V_\ep\|_{L^2_x}^2
}
Hence
\begin{align}\label{eq:I_2}
|I_2| 
\leq \dfrac{\bar{C}\ep}{4} \|\partial_x V_\ep\|_{L^2_x}^2 
+ C \ep^{-1} \lrho_\ep^{-1/2} \|V_\ep\|_{L^2_x}^2 \|\sqrt{\rho_\ep} u_\ep\|_{L^2_x}^2
+ C \ep^{-3} \lrho_\ep^{-\alpha} \|V_\ep\|_{L^2_x}^4.
\end{align}
\emph{Control of $I_3$.}
\begin{align*}
|I_3| 
& \leq \int \left| \dfrac{\big( \lambda'_\ep(\rho_\ep) \rho_\ep  + \lambda_\ep(\rho_\ep)\big)}{\big(\lambda_\ep(\rho_\ep)\big)^2}\right| |V_\ep|^3 \\
& \leq \left\|\dfrac{\big( \lambda'_\ep(\rho_\ep) \rho_\ep  + \lambda_\ep(\rho_\ep)\big)}{\big(\lambda_\ep(\rho_\ep)\big)^2}\right\|_{L^\infty}
\big(\|\partial_x V_\ep\|_{L^2_x}^{1/2} \|V_\ep\|_{L^2_x}^{5/2} + \|V_\ep\|_{L^2_x}^3 \big)\\
& \leq C \dfrac{\ep^{-1}}{\lrho_\ep^{2\alpha} (1-\urho_\ep)^{\beta+2} }\big(\|\partial_x V_\ep\|_{L^2_x}^{1/2} \|V_\ep\|_{L^2_x}^{5/2} + \|V_\ep\|_{L^2_x}^3 \big) \\
& \leq \dfrac{\bar C \ep}{4} \|\partial_x V_\ep\|_{L^2_x}^2 
+ C \dfrac{\ep^{-4/3 -1/3}}{\big(\lrho_\ep^{2\alpha} (1-\urho_\ep)^{\beta+2}\big)^{4/3} } \|V_\ep\|_{L^2_x}^{5/2 \times 4/3} 
+ C \dfrac{\ep^{-1}}{\lrho_\ep^{2\alpha} (1- \urho_\ep)^{\beta+2}} \|V_\ep\|_{L^2_x}^3.
\end{align*}
Putting everything together, we get
\begin{align*}
& \dfrac{d}{dt} \|V_\ep\|_{L^2_x}^2 + C\ep \|\partial_x V_\ep\|_{L^2_x}^2 \\
& \leq C \Bigg(\ep^{{-3}} \|V_\ep\|_{L^2_x}^2 \|R\|_{L^2_x}^4 
+ \ep^{-1} \|V_\ep\|_{L^2_x}^2\|R\|_{L^2_x}^2 
+ \ep^{-1} \lrho_\ep^{-1/2} \|V_\ep\|_{L^2_x}^2 \|\sqrt{\rho_\ep} u_\ep\|_{L^2_x}^2 \\
& \qquad + \ep^{{-3}} \lrho_\ep^{-\alpha} \|V_\ep\|_{L^2_x}^4
+ \dfrac{\ep^{-5/3}}{\big(\lrho_\ep^{2\alpha} (1-\urho_\ep)^{\beta+2}\big)^{4/3} } \|V_\ep\|_{L^2_x}^{10/3} 
+  \dfrac{\ep^{-1}}{\lrho_\ep^{2\alpha} (1- \urho_\ep)^{\beta+2}} \|V_\ep\|_{L^2_x}^3
\Bigg) \\
& \leq C \|V_\ep\|_{L^2_x}^2 
\Bigg(\ep^{-3} \|R\|_{L^2_x}^4  
+ \ep^{-1} \lrho_\ep^{-1/2}  \|\sqrt{\rho_\ep} u_\ep\|_{L^2_x}^2 
+ \dfrac{\ep^{{-3}}}{\big(\lrho_\ep^{2\alpha} (1-\urho_\ep)^{\beta+2}\big)^{4/3} } \|V_\ep\|_{L^2_x}^2 \Bigg),
\end{align*}
and by Gronwall's inequality:
\begin{align*}
&\|V_\ep\|_{L^\infty_t L^2_x}^2 + \ep \|\partial_x V_\ep\|_{L^2_t L^2_x}^2 \\
&\leq C \|V_\ep(0)\|_{L^2_x}^2 
\exp\left[ T \left( \ep^{-3} \|R\|_{L^\infty_t L^2_x}^4 
+ \ep^{-1} \lrho_\ep^{-1/2}  \|\sqrt{\rho_\ep} u_\ep\|_{L^\infty_t L^2_x}^2 \right)
+ \dfrac{\ep^{{-3}}}{\big(\lrho_\ep^{2\alpha} (1-\urho_\ep)^{\beta+2}\big)^{4/3} } \|V_\ep\|_{L^2_t L^2_x}^2 
\right]
\end{align*}
\end{proof}

From this result, we infer estimates on $\partial_x u_\ep$. We have the following results:
\begin{lem}\label{lem:reg-pxu}
\begin{itemize} We have:\\

    \item[(i)]  $\partial_x u_\ep$ is bounded in $L^2_t L^2_x$ with the estimate
    \begin{align}
    \|\partial_x u_\ep \|_{L^2_t L^2_x}
& \leq \left\| \dfrac{1}{\sqrt{\lambda_\ep(\rho_\ep)}}\right\|_{L^\infty_{t,x}}  \|\sqrt{\lambda_\ep(\rho_\ep)}\partial_x u_\ep\|_{L^2 L^2}  
\leq C \ep^{-1/2} \lrho_\ep^{-\alpha/2} E_0^{1/2}.
    \end{align}
    \item[(ii)]  $\partial_x u_\ep$ is bounded in $L^\infty_t L^2_x$ with the estimate
\begin{align}\label{eq:reg-pxu}
\|\partial_x u_\ep \|_{L^\infty_t L^2_x}
& \leq \left\| \dfrac{1}{\lambda_\ep(\rho_\ep)}\right\|_{L^\infty_{t,x}} \|V_\ep\|_{L^\infty_t L^2_x}  \leq \ep^{-1} \lrho_\ep^{-\alpha} E_2^{1/2}.
\end{align}
    
    \item[(iii)] $\partial^2_x u_\ep$ is bounded in $L^2_t L^2_x$ with the estimate
\eq{\label{eq:reg-p2xu}
\|\partial^2_x u_\ep \|_{L^2_t L^2_x}
\leq &C \left\| \dfrac{1}{\lambda_\ep(\rho_\ep)}\right\|_{L^\infty_{t,x}}
\|\partial_x V_\ep\|_{L^2_t L^2_x} 
\\
& +  C \left\| \dfrac{1}{\lambda_\ep(\rho_\ep)}\right\|_{L^\infty_{t,x}}^2 \|\partial_x \lambda_\ep(\rho_\ep)\|_{L^\infty_t L^2_x} \big(\|\partial_x \lambda_\ep(\rho_\ep)\|_{L^\infty_t L^2_x} + 1\big) \|\partial_x u_\ep\|_{L^2_t L^2_x}  
}
\end{itemize}
\end{lem}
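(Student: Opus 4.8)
The plan is to obtain all three bounds purely by algebra from quantities that are already controlled at $\ep$ fixed --- namely $\sqrt{\lambda_\ep(\rho_\ep)}\,\partial_x u_\ep$ (Lemma~\ref{lem:energy}), $V_\ep$ and $\partial_x V_\ep$ (Lemma~\ref{lem:V-LiL2}) --- together with the two-sided pointwise bounds on $\rho_\ep$ of Lemmas~\ref{lem:up_rho}--\ref{lem:low_rho}. None of these bounds is uniform in $\ep$; the only purpose here is finiteness at $\ep$ fixed, which feeds the continuation argument of Section~\ref{Sec:3}. The key elementary inequality is $\lambda_\ep(\rho_\ep)=\rho_\ep^2 p_\ep'(\rho_\ep)+\ep\rho_\ep^\alpha\geq \ep\rho_\ep^\alpha\geq \ep\,\lrho_\ep^\alpha$ (the same $\lambda_\ep(\rho_\ep)/\rho_\ep\geq\ep$ used in the proof of Lemma~\ref{lem:V-LiL2}, combined with Lemma~\ref{lem:low_rho}), which gives $\|\lambda_\ep(\rho_\ep)^{-1}\|_{L^\infty_{t,x}}\leq \ep^{-1}\lrho_\ep^{-\alpha}$ and $\|\lambda_\ep(\rho_\ep)^{-1/2}\|_{L^\infty_{t,x}}\leq \ep^{-1/2}\lrho_\ep^{-\alpha/2}$.

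For (i) and (ii): I would write $\partial_x u_\ep=\lambda_\ep(\rho_\ep)^{-1/2}\big(\sqrt{\lambda_\ep(\rho_\ep)}\,\partial_x u_\ep\big)$, apply Cauchy--Schwarz pointwise in $x$, take $L^\infty_{t,x}$ of the first factor and $L^2_tL^2_x$ of the second (bounded by $CE_{0,\ep}^{1/2}$ via Lemma~\ref{lem:energy}); this is (i). Similarly $\partial_x u_\ep=\lambda_\ep(\rho_\ep)^{-1}V_\ep$ together with the $L^\infty_tL^2_x$ bound on $V_\ep$ from Lemma~\ref{lem:V-LiL2} gives (ii). In both cases the only $\rho_\ep$-dependent input is $\|\lambda_\ep(\rho_\ep)^{-1}\|_{L^\infty_{t,x}}$.

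For (iii): differentiating the identity $V_\ep=\lambda_\ep(\rho_\ep)\partial_x u_\ep$ in $x$ yields
\[
\partial_x^2 u_\ep = \frac{1}{\lambda_\ep(\rho_\ep)}\,\partial_x V_\ep \;-\; \frac{\partial_x\lambda_\ep(\rho_\ep)}{\lambda_\ep(\rho_\ep)}\,\partial_x u_\ep .
\]
The first term is estimated directly by $\|\lambda_\ep(\rho_\ep)^{-1}\|_{L^\infty_{t,x}}\|\partial_x V_\ep\|_{L^2_tL^2_x}$, with $\|\partial_x V_\ep\|_{L^2_tL^2_x}$ finite at $\ep$ fixed thanks to the $\ep\,\|\partial_x V_\ep\|_{L^2_tL^2_x}^2$ control in Lemma~\ref{lem:V-LiL2}; this produces the first term of the right-hand side of (iii). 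For the second term, Hölder puts $\partial_x\lambda_\ep(\rho_\ep)$ in $L^\infty_tL^2_x$ (bounded as in the proof of Lemma~\ref{lem:V-LiL2}, via the decomposition $\lambda_\ep=\lambda_\ep^s+\lambda_\ep^n$ and estimate~\eqref{eq:BD-entropy}) and leaves $\partial_x u_\ep$ in $L^2_tL^\infty_x$. Since $\partial_x u_\ep$ is not controlled in $L^\infty_x$ directly, I would interpolate by the one-dimensional Agmon/Gagliardo--Nirenberg inequality $\|\partial_x u_\ep(t)\|_{L^\infty_x}\leq C\|\partial_x u_\ep(t)\|_{L^2_x}^{1/2}\|\partial_x^2 u_\ep(t)\|_{L^2_x}^{1/2}+C\|\partial_x u_\ep(t)\|_{L^2_x}$, integrate in $t$ with Cauchy--Schwarz, and then use Young's inequality on the resulting factor $\|\partial_x^2 u_\ep\|_{L^2_tL^2_x}^{1/2}$ to absorb a small multiple of $\|\partial_x^2 u_\ep\|_{L^2_tL^2_x}$ into the left-hand side. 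Collecting the Young remainder with the lower-order piece and using $\|\lambda_\ep(\rho_\ep)^{-1}\|_{L^\infty_{t,x}}\geq 1$ (valid for $\ep$ small) gives exactly the advertised term $\|\lambda_\ep(\rho_\ep)^{-1}\|_{L^\infty_{t,x}}^2\|\partial_x\lambda_\ep(\rho_\ep)\|_{L^\infty_tL^2_x}\big(\|\partial_x\lambda_\ep(\rho_\ep)\|_{L^\infty_tL^2_x}+1\big)\|\partial_x u_\ep\|_{L^2_tL^2_x}$.

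There is no genuine obstacle: everything is algebraic rearrangement plus standard 1D interpolation. The only point needing care is the Young-inequality absorption in (iii) --- one must keep the coefficient of $\|\partial_x^2 u_\ep\|_{L^2_tL^2_x}$ strictly below $1$, which is always possible at the price of enlarging the ($\ep$-dependent) constant multiplying $\|\partial_x u_\ep\|_{L^2_tL^2_x}$ --- and, more cosmetically, tracking the powers of $\ep$ so that they match the way the constants $E_{0,\ep}$, $E_{1,\ep}$, $\urho_\ep$, $\lrho_\ep$ are propagated through Lemmas~\ref{lem:energy}--\ref{lem:V-LiL2}.
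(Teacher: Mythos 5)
Your proposal matches the paper's proof essentially step for step: (i) and (ii) are the same algebraic factorizations of $\partial_x u_\ep$ through $\sqrt{\lambda_\ep}\partial_x u_\ep$ and $V_\ep$ respectively combined with the pointwise lower bound $\lambda_\ep(\rho_\ep)\geq\ep\lrho_\ep^\alpha$, and for (iii) the paper differentiates $V_\ep=\lambda_\ep\partial_x u_\ep$, applies exactly the same one-dimensional Agmon/Gagliardo--Nirenberg interpolation for $\|\partial_x u_\ep\|_{L^2_tL^\infty_x}$, and then (implicitly) absorbs the resulting $\|\partial_x^2 u_\ep\|^{1/2}$ by Young, which you spell out explicitly. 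The only cosmetic point is your invocation of $\|\lambda_\ep^{-1}\|_{L^\infty}\geq1$ to merge the Young remainder into the stated constant, which can be sidestepped by replacing that factor with $\max(\|\lambda_\ep^{-1}\|_{L^\infty},1)$; this does not affect the purpose of the lemma ($\ep$-fixed finiteness).
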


\begin{proof}
Part $(i)$ follows immediately  from \eqref{eq:energy}. Part $(ii)$ is combination of definition of $V_\ep$ and estimate \ref{eq:est_V}. And finally, part $(iii)$ follows by differentiating $V_\ep$, we get $\partial_x V_\ep = \partial_x \lambda_\ep \partial_x u_\ep + \lambda_\ep \partial^2_x u_\ep$, so that
\eq{
& \|\partial^2_x u_\ep\|_{L^2_t L^2_x} \\
& \leq \left\| \dfrac{1}{\lambda_\ep(\rho_\ep)}\right\|_{L^\infty_{t,x}} \Bigg( \|\partial_x V_\ep\|_{L^2_t L^2_x}
+ \|\partial_x \lambda_\ep\|_{L^\infty_t L^2_x} \|\partial_x u_\ep\|_{L^2_t L^\infty_x} \Bigg)\\
& \leq C \left\| \dfrac{1}{\lambda_\ep(\rho_\ep)}\right\|_{L^\infty_{t,x}} \Bigg( \|\partial_x V_\ep\|_{L^2_t L^2_x}
+ \|\partial_x \lambda_\ep\|_{L^\infty_t L^2_x}\lr{
\|\partial_x u_\ep\|_{L^2_t L^2_x}^{1/2}  \|\partial_x^2 u_\ep\|_{L^2_t L^2_x}^{1/2} +\|\partial_x u_\ep\|_{L^2_t L^2_x}}
\Bigg).
}
\end{proof}

\paragraph{Higher order regularity estimates}
We begin with a formal computation for additional regularity estimates, for which we assume that the functions $\re,\ue$ are sufficiently smooth.
\begin{lem}\label{lem:formal}
Let $m\geq 2$ and $ \re \in C^1([0,T];C^m(\T))$, $ \ue \in C([0,T];C^{m+1}(\T))$ and they satisfy \eqref{eq:AW-2-cont}, then we have 
\begin{align}\label{rho-m-2}
	\begin{split}
	 \frac{1}{2}\frac{d}{dt} \Vert \partial_x^m \rho_\ep \Vert_{L^2_x}^2 & \leq C\left(    \Vert \px^m \ue \Vert_{L^2_x}  \Vert \px^m \re \Vert_{L^2_x}^2 + \Vert \re \Vert_{L^\infty_x} \Vert \px^m \re \Vert_{L^2_x}  \Vert \px^{m+1} \ue \Vert_{L^2_x}  \right)
 \end{split}
\end{align}
Moreover, for $l\geq 1$ and $ \re \in C([0,T];C^{l+1}(\T^d))$, $ \ue \in C([0,T];C^{l}(\T^d))$, $ V_\ep \in C^1([0,T];C^{l+2}(\T^d))$ satisfying \eqref{eq:Vep}, we have \begin{align}\label{V-m-1}
	\begin{split}
	 \frac{1}{2}\frac{d}{dt} \int \vert \partial_x^l V_\ep \vert^2 =& - \int \px^l \left( \left( \ue+  \dfrac{\lambda_\ep(\rho_\ep)}{\rho_\ep^2}\partial_x \rho_\ep\right) \partial_x V_\ep \right) \px^l V_\ep 
	  \\
	  &+\int \px^l \left(  \dfrac{\lambda_\ep(\rho_\ep)}{\rho_\ep} \partial^2_x V_\ep \right) \px^l V_\ep  - \int \px^l \left( \dfrac{\big( \lambda'_\ep(\rho_\ep) \rho_\ep + \lambda_\ep(\rho_\ep)\big)}{\big(\lambda_\ep(\rho_\ep)\big)^2} V_\ep^2 \right) \px^l V_\ep.
 \end{split}
\end{align}
\end{lem}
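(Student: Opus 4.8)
The plan is to obtain both statements by differentiating the relevant transport equations $m$ (resp.\ $l$) times in $x$, testing against $\partial_x^m\rho_\ep$ (resp.\ $\partial_x^l V_\ep$), and then expanding with the Leibniz rule; only the first part requires genuine estimates, the second being a bookkeeping identity. For \eqref{rho-m-2} I would apply $\partial_x^m$ to the continuity equation \eqref{eq:AW-2-cont} to get $\partial_t\partial_x^m\rho_\ep + \partial_x^{m+1}(\rho_\ep u_\ep)=0$, multiply by $\partial_x^m\rho_\ep$ and integrate over $\T$, so that $\tfrac12\tfrac{d}{dt}\|\partial_x^m\rho_\ep\|_{L^2_x}^2 = -\int_\T \partial_x^{m+1}(\rho_\ep u_\ep)\,\partial_x^m\rho_\ep$. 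I then split the right-hand side as $-\int_\T u_\ep\,\partial_x^{m+1}\rho_\ep\,\partial_x^m\rho_\ep$ plus the commutator term $-\int_\T\big[\partial_x^{m+1}(\rho_\ep u_\ep)-u_\ep\,\partial_x^{m+1}\rho_\ep\big]\partial_x^m\rho_\ep$.

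The first piece is integrated by parts in $x$, turning it into $\tfrac12\int_\T \partial_x u_\ep\,(\partial_x^m\rho_\ep)^2 \le \tfrac12\|\partial_x u_\ep\|_{L^\infty_x}\|\partial_x^m\rho_\ep\|_{L^2_x}^2$; the commutator is controlled by a classical tame product (Kato--Ponce-type) inequality, $\|\partial_x^{m+1}(\rho_\ep u_\ep)-u_\ep\partial_x^{m+1}\rho_\ep\|_{L^2_x}\lesssim \|\partial_x u_\ep\|_{L^\infty_x}\|\partial_x^m\rho_\ep\|_{L^2_x}+\|\rho_\ep\|_{L^\infty_x}\|\partial_x^{m+1}u_\ep\|_{L^2_x}$, followed by Cauchy--Schwarz against $\partial_x^m\rho_\ep$. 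Collecting the terms and using that, in one space dimension with $m\ge 2$, $\|\partial_x u_\ep\|_{L^\infty_x}$ is bounded by $\|\partial_x^m u_\ep\|_{L^2_x}$ (via an Agmon/Gagliardo--Nirenberg interpolation in $\T$, up to lower-order $L^2$ norms absorbed into the constant) leaves exactly the two terms displayed on the right of \eqref{rho-m-2}. One could equivalently expand $\partial_x^{m+1}(\rho_\ep u_\ep)$ fully by Leibniz and interpolate each intermediate-order factor $\partial_x^j\rho_\ep\,\partial_x^{m+1-j}u_\ep$ with $1\le j\le m$, but the commutator route is cleaner.

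For \eqref{V-m-1}, the stated regularity --- $\rho_\ep\in C([0,T];C^{l+1})$, $u_\ep\in C([0,T];C^{l})$, $V_\ep\in C^1([0,T];C^{l+2})$ --- is precisely what is needed to apply $\partial_x^l$ to each term of the evolution equation \eqref{eq:Vep} (the second-order term $\tfrac{\lambda_\ep(\rho_\ep)}{\rho_\ep}\partial_x^2 V_\ep$ forces the $C^{l+2}$ requirement on $V_\ep$, and the coefficient $\tfrac{\lambda_\ep(\rho_\ep)}{\rho_\ep^2}\partial_x\rho_\ep$ forces $C^{l+1}$ on $\rho_\ep$). I would simply differentiate \eqref{eq:Vep} $l$ times in $x$, multiply by $\partial_x^l V_\ep$ and integrate over $\T$: the time-derivative term produces $\tfrac12\tfrac{d}{dt}\int|\partial_x^l V_\ep|^2$ and the other three terms reappear verbatim, which is exactly \eqref{V-m-1}; no inequality is involved. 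The only substantive step in the whole lemma is therefore the commutator/product estimate in \eqref{rho-m-2}, and I anticipate no real obstacle there --- the only care needed is to organise the Leibniz expansion so that precisely the two advertised structures survive after interpolating the intermediate-order factors.
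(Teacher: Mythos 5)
Your proof is correct and follows essentially the paper's approach. For \eqref{rho-m-2} the only cosmetic difference is the choice of commutator decomposition: you use a single order-$(m+1)$ commutator $[\partial_x^{m+1},u_\ep]\rho_\ep = \partial_x^{m+1}(\rho_\ep u_\ep)-u_\ep\partial_x^{m+1}\rho_\ep$, while the paper first rewrites $\partial_x(\rho_\ep u_\ep) = u_\ep\partial_x\rho_\ep + \rho_\ep\partial_x u_\ep$ and then peels off the top-order pieces, leaving two order-$m$ commutators $[\partial_x^m,u_\ep]\partial_x\rho_\ep$ and $[\partial_x^m,\rho_\ep]\partial_x u_\ep$ plus the separate terms $\int\partial_x^m\rho_\ep\, u_\ep\,\partial_x^{m+1}\rho_\ep$ (treated by integration by parts, as you do) and $\int\partial_x^m\rho_\ep\,\rho_\ep\,\partial_x^{m+1}u_\ep$ (Cauchy--Schwarz). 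Both routes invoke the same Kato--Ponce/tame product estimate and the same periodic Sobolev interpolation $\|\partial_x g\|_{L^\infty_x}\le C\|\partial_x^m g\|_{L^2_x}$ for $m\ge 2$ (the paper's inequality \eqref{embed-1}), and they land on the same two terms; your observation that the $\|\rho_\ep\|_{L^\infty_x}\|\partial_x^{m+1}u_\ep\|_{L^2_x}$ contribution comes out of the top Leibniz coefficient inside the commutator is exactly what the paper achieves by listing that term separately. Your treatment of \eqref{V-m-1} as a pure bookkeeping identity — differentiate \eqref{eq:Vep} $l$ times, pair with $\partial_x^l V_\ep$, integrate — matches the paper verbatim, and your reading of the hypotheses ($C^{l+2}$ on $V_\ep$ to accommodate the diffusion term, $C^{l+1}$ on $\rho_\ep$ for the drift coefficient) is the right justification.
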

\begin{proof}

First, differentiating continuity equation $ m $ times with respect to $ x $ and multiplying by $\px^m \re  $, we deduce 
	\begin{align*}
		 \frac{1}{2}\frac{d}{dt} \int \vert \partial_x^m \rho_\ep \vert^2 = -\int \partial_x^m \rho_\ep \left( \partial_x^m(u_\ep \partial_x \rho_\ep )+ \partial_x^m(\rho_\ep \partial_x u_\ep) \right) .
	\end{align*}
We introduce the commutator notation $ [D,f]g = D(fg)- f Dg$, where $ D $ is any differential operator and $ f,g $ are sufficiently smooth functions, we rewrite the above equation as 
\begin{align}\label{rho-m-1}
	\begin{split}
	 \frac{1}{2}\frac{d}{dt} \int \vert \partial_x^m \rho_\ep \vert^2 =& -\int \partial_x^m \rho_\ep \left( [\px^m, \ue] \px \re  \right) - \int \px^m \re \ue \px^{m+1} \re \\
	 &-\int \partial_x^m \rho_\ep\left(  [\px^m, \re] \px \ue \right) - \int \px^m \re \re \px^{m+1} \ue .
 \end{split}
\end{align}

Now invoke the inequality for \cite[Page 12]{constantin2020}, it says that for $ g\in W^{n,2} (\T) $ with $ n\geq 3 $, we have 
\begin{align}\label{embed-1}
	\Vert \px g \Vert_{L^\infty_x} \leq C \Vert \px^2 g \Vert_{L^2_x} \leq C(n)  \Vert \px^n g \Vert_{L^2_x}.
\end{align}

Therefore, estimate \eqref{embed-1} and Kato-Ponce theory yield
\begin{align*}
	\Vert  [\px^m, \ue] \px \re  \Vert_{L^2_x} &\leq  C\left(\Vert \px^m \re \Vert_{L^2_x} \Vert \px \ue \Vert_{L^\infty_x} +  \Vert \px^m \ue \Vert_{L^2_x} \Vert \px \re \Vert_{L^\infty_x} \right) \leq C( \Vert \px^m \ue \Vert_{L^2_x} \Vert \px^m \re \Vert_{L^2_x})
\end{align*}
and 
\begin{align*}
	&\Vert  [\px^m, \re] \px \ue  \Vert_{L^2_x} \leq  C\left(\Vert \px^m \ue \Vert_{L^2_x} \Vert \px \re \Vert_{L^\infty_x} +  \Vert \px^m \re \Vert_{L^2_x} \Vert \px \ue \Vert_{L^\infty_x} \right)\leq C( \Vert \px^m \ue \Vert_{L^2_x} \Vert \px^m \re \Vert_{L^2_x}) .
\end{align*}

Now going back to \eqref{rho-m-1}, we notice that 
\begin{align*}
	- \int \px^m \re \ue \px^{m+1} \re = \frac{1}{2} \int \px \ue \vert \px^m \re \vert^2.
\end{align*}
This implies 
\begin{align*}
	\int \px \ue \vert \px^m \re \vert^2 \leq C  \Vert \px \ue \Vert_{L^\infty_x}  \Vert \px^m \re \Vert_{L^2_x}^2.
\end{align*}
Combining, all these estimates and plugging into \eqref{rho-m-1} we finally obtain \eqref{rho-m-2}.
Next, differentiating \eqref{eq:Vep} $ l $ times with respect to space and multiplying by $ \px^l  V_\ep  $ we obtain \eqref{V-m-1}.
\end{proof}


We already have the estimate of $ V_\ep $ in $ L^\infty_t L^2_x$ and $ \px V_\ep $ in $ L^2_t L^2_x$ from \eqref{eq:est_V}. Also, Lemma \ref{lem:up_rho} and Lemma \ref{lem:reg-pxu} give
\begin{align}
	\Vert \px \re \Vert_{L^\infty_t L^2_x} =\left(\alpha-\frac{1}{2}\right)^{-1} \frac{1}{\ep} \left\Vert \re ^{\frac{3}{2}-\alpha} \px(\ep \re^{\alpha- \frac{1}{2}}) \right\Vert_{L^\infty_t L^2_x} \leq \frac{1}{\ep}C(\bar{\rho}_\ep,\alpha, T, E_0, E_1). 
\end{align}
Using the formulas from Lemma \ref{lem:formal}, we want to derive the estimates for two further orders of regularity:
 \begin{itemize}
     \item First, the case corresponding to  $ m=2 $ and $ l=1 $ in \eqref{rho-m-2} and \eqref{V-m-1}, respectively.
     \item Then, the case  corresponding to $ m=3 $ and $ l=2 $ in \eqref{rho-m-2} and \eqref{V-m-1}, respectively.
 \end{itemize}
The obtained results are summarised in the two statements below.

\begin{prop}\label{prop-reg-m3}
Let $ \ep>0$ be fixed and $(\rho_\ep,u_\ep)$ be a regular solution of system~\eqref{eq:AW-2} with \[ E_{3,\ep} = E_{1,\ep} + \|\partial^2_x u_{0,\ep}\|_{L^2_x}+ \|\partial^2_x \rho_{0,\ep}\|_{L^2_x}+ \|\partial^3_x u_{0,\ep}\|_{L^2_x}+ \|\partial^3_x \rho_{0,\ep}\|_{L^2_x}. \] Then we have
\begin{align}
& \|\partial^3_x \rho_\ep\|_{L^\infty_t L^2_x} + \|\partial_x^{2} V_\ep \|_{L^\infty_t L^2_x} + \|\partial^3_x V_\ep\|_{L^2_t L^2_x} + \|\partial^3_x u_\ep \|_{L^\infty_t L^2_x} + \|\partial^{4}_x u_\ep\|_{L^2_t L^2_x} \nonumber \\
& \leq C(\ep, E_{3,\ep}, \lrho_\ep, \urho_\ep, T).
\end{align}

\end{prop}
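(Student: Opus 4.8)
The plan is to close a system of differential inequalities for the highest-order norms $\|\partial_x^m \rho_\ep\|_{L^2_x}$ and $\|\partial_x^l V_\ep\|_{L^2_x}$, following Constantin et al.~\cite{constantin2020}, carrying out the two steps $(m,l)=(2,1)$ and then $(m,l)=(3,2)$ in succession. At each step I would treat the coefficients $\lambda_\ep(\rho_\ep)/\rho_\ep$, $\lambda_\ep(\rho_\ep)/\rho_\ep^2$, $(\lambda_\ep'(\rho_\ep)\rho_\ep+\lambda_\ep(\rho_\ep))/\lambda_\ep(\rho_\ep)^2$ appearing in \eqref{eq:Vep} as smooth functions of $\rho_\ep$ on the compact interval $[\lrho_\ep, \urho_\ep]\subset(0,1)$, so that all their $C^k$-norms are bounded by a constant $C(\ep,\lrho_\ep,\urho_\ep)$; this is where Lemmas~\ref{lem:up_rho}--\ref{lem:low_rho} are used. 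The already-established bounds --- $\|\sqrt{\rho_\ep}u_\ep\|_{L^\infty_t L^2_x}$, $\|V_\ep\|_{L^\infty_t L^2_x}$ and $\ep^{1/2}\|\partial_x V_\ep\|_{L^2_t L^2_x}$ from Lemma~\ref{lem:V-LiL2}, $\|\partial_x u_\ep\|_{L^2_tL^2_x}$, $\|\partial_x u_\ep\|_{L^\infty_tL^2_x}$, $\|\partial_x^2 u_\ep\|_{L^2_tL^2_x}$ from Lemma~\ref{lem:reg-pxu}, and $\|\partial_x \rho_\ep\|_{L^\infty_tL^2_x}$, $\|\partial_x\lambda_\ep(\rho_\ep)\|_{L^\infty_tL^2_x}$ --- serve as the base of the induction.

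For the step $(m,l)=(2,1)$: in \eqref{V-m-1} with $l=1$ I integrate by parts in the second (parabolic) term to produce the good term $-\int (\lambda_\ep(\rho_\ep)/\rho_\ep)(\partial_x^2 V_\ep)^2 \geq c\,\ep\,\|\partial_x^2 V_\ep\|_{L^2_x}^2$ using $\lambda_\ep(\rho_\ep)/\rho_\ep\geq\ep$, and absorb all commutator and lower-order terms into it via Young's inequality, exactly as in the proof of Lemma~\ref{lem:V-LiL2}; the transport term $\int \partial_x((u_\ep+\cdots)\partial_x V_\ep)\partial_x V_\ep$ is handled by the usual $\frac12\int\partial_x(\cdots)|\partial_x V_\ep|^2$ trick. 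This gives a Gronwall inequality for $\|\partial_x V_\ep\|_{L^2_x}^2$ whose coefficients involve $\|\partial_x u_\ep\|_{L^\infty_x}$ and hence $\|\partial_x^2 u_\ep\|_{L^2_x}$ (via \eqref{embed-1} and $\partial_x^2 u_\ep=\lambda_\ep^{-1}(\partial_x V_\ep-\partial_x\lambda_\ep\partial_x u_\ep)$), all controlled by the already-known $L^2_tL^2_x$ quantities; simultaneously \eqref{rho-m-2} with $m=2$ gives a Gronwall inequality for $\|\partial_x^2\rho_\ep\|_{L^2_x}^2$ driven by $\|\partial_x^2 u_\ep\|_{L^2_x}$ and $\|\partial_x^3 u_\ep\|_{L^2_x}$, the latter again recovered from $\partial_x V_\ep$ by differentiating $V_\ep=\lambda_\ep\partial_x u_\ep$ once more. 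Coupling the two inequalities and using that the driving terms are integrable in time closes the bound on $\|\partial_x^2\rho_\ep\|_{L^\infty_tL^2_x}$, $\|\partial_x V_\ep\|_{L^\infty_tL^2_x}$, $\|\partial_x^2 V_\ep\|_{L^2_tL^2_x}$, and then on $\|\partial_x^2 u_\ep\|_{L^\infty_tL^2_x}$, $\|\partial_x^3 u_\ep\|_{L^2_tL^2_x}$ through Lemma~\ref{lem:reg-pxu}-type identities.

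For the step $(m,l)=(3,2)$ I repeat the scheme one order higher: \eqref{V-m-1} with $l=2$, integrating by parts in the parabolic term and expanding the commutators by the Leibniz rule, the top-order dangerous term being $\int (\partial_x^2(\lambda_\ep/\rho_\ep))\,\partial_x^2 V_\ep\,\partial_x^2 V_\ep$-type contributions and terms with $\partial_x^3\rho_\ep$; these are controlled using the $(m,l)=(2,1)$ bounds plus Kato--Ponce, and absorbed into $c\,\ep\,\|\partial_x^3 V_\ep\|_{L^2_x}^2$. Paired with \eqref{rho-m-2} for $m=3$ (whose right side needs $\|\partial_x^4 u_\ep\|_{L^2_x}$, recovered from $\partial_x^3 V_\ep$ and lower terms) this yields the final Gronwall estimate. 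The main obstacle I anticipate is purely bookkeeping: tracking the precise negative powers of $\ep$, $\lrho_\ep$ and $(1-\urho_\ep)$ generated by the many Young's-inequality splittings and by the coefficients' $C^k$-norms, and making sure every term genuinely closes against the single good term $\ep\|\partial_x^{l+1}V_\ep\|_{L^2_x}^2$ with room to spare --- there is no conceptual difficulty beyond what Lemma~\ref{lem:V-LiL2} already illustrates, and since $\ep$ is fixed here the constants are allowed to blow up as $\ep\to0$, so the estimates are not delicate, only lengthy. For this reason I would relegate the full computation to the Appendix, as the authors indicate, and state here only the outcome.
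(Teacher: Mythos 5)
Your plan coincides with the paper's actual proof: both carry out the two steps $(m,l)=(2,1)$ (the paper's Lemma~\ref{lem-reg-m2}) and $(m,l)=(3,2)$ in succession, both extract the good term from $\lambda_\ep(\rho_\ep)/\rho_\ep\geq\ep$ after integration by parts, both control the coefficients using the density bounds $\lrho_\ep\leq\rho_\ep\leq\urho_\ep$, both recover $\partial_x^{m+1}u_\ep$ from $\partial_x^{m-1}V_\ep$ by differentiating $V_\ep=\lambda_\ep\partial_x u_\ep$, and both close the coupled $(\|\partial_x^m\rho_\ep\|_{L^2},\|\partial_x^{m-1}V_\ep\|_{L^2})$ estimate by Gr\"onwall. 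The proposal is a correct outline of the same argument, deferring the (lengthy but non-delicate, since $\ep$ is fixed) bookkeeping of Young-inequality splittings exactly as the paper does.
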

The Lemma \ref{lem-reg-m2} is an analogue of \cite[ Lemma 4.2]{constantin2020} and the Proposition \ref{prop-reg-m3} is an analogue of \cite[Lemma 4.3]{constantin2020}. For completeness we include the proofs of both results, see the Appendix \ref{appendix}.
\section{Estimates uniform in $\ep$} \label{Sec:4}
In this section we first recall and derive additional uniform w.r.t. $\ep$ estimates which eventually will allow us to let $\ep\to 0$. 
The two key estimates of this section are: the control of the singular potential $\pi_\ep$, and the Oleinik entropy condition on $u_\ep$. 
\subsection{Estimates based on the energy bounds}\label{Sec:estim-ep}
First note that the Lemmas \ref{lem:energy} and \ref{lem:BD-entropy} are already uniform with respect to $\ep$, and so we have the following uniform bounds
\begin{prop}
Let the initial data $\rho_{\ep}^0,\ue^0$ be such that
\eq{E_{0,\ep}+E_{1,\ep} \leq C}
for some $C$ independent of $\ep$. Then we have for the solution $\re,\ue$:
\eq{\label{est_un_ep}
&\|\sqrt{\rho_\ep} u_\ep\|_{L^\infty_t L^2_x} \leq C\\
&\|\sqrt{\rho_\ep} w_\ep\|_{L^\infty_t L^2_x} \leq C\\
&\| \sqrt{\lambda_\ep(\rho_\ep)} \partial_x u_\ep \|_{L^2_t L^2_x}\leq C\\
&\|\re\|_{L^\infty_t L^\infty_x}\leq 1.
}
\end{prop}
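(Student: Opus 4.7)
The plan is simply to assemble the estimates already proved in Lemmas \ref{lem:energy}, \ref{lem:BD-entropy} and \ref{lem:up_rho}, and to inspect their right-hand sides to verify that none of the constants hide $\ep$-dependent quantities once $E_{0,\ep}+E_{1,\ep}\le C$ is assumed uniformly in $\ep$. Nothing new needs to be produced: the real content was already in the basic energy estimate, the BD-type entropy, and the quantitative density bound of Section 3.

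First I would invoke Lemma \ref{lem:energy}, whose bound $\|\sqrt{\rho_\ep}u_\ep\|_{L^\infty_t L^2_x}^2 + \|\sqrt{\lambda_\ep(\rho_\ep)}\partial_x u_\ep\|_{L^2_t L^2_x}^2 \le C E_{0,\ep}$ is manifestly uniform in $\ep$ under the standing hypothesis. This gives the first and third bounds of \eqref{est_un_ep} at once. The second bound on $\sqrt{\rho_\ep}w_\ep$ in $L^\infty_t L^2_x$ is then the first contribution in Lemma \ref{lem:BD-entropy}, whose right-hand side is $C E_{1,\ep}(1+T)$; again uniform in $\ep$ by hypothesis.

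For the $L^\infty_t L^\infty_x$ bound $\rho_\ep \le 1$, the crudest route is to observe that $H_\ep^s(\rho_\ep) \sim \ep/(1-\rho_\ep)^{\beta-1}$ blows up as $\rho_\ep\to 1^-$, while Lemma \ref{lem:BD-entropy} keeps $\|H_\ep(\rho_\ep)\|_{L^\infty_t L^1_x}$ finite; this already forces $\rho_\ep\le 1$ pointwise. Alternatively (and more precisely), Lemma \ref{lem:up_rho} yields the sharper bound $\rho_\ep \le 1 - C(\ep/(1+\sqrt{T}))^{1/(\beta-1)}$, whose constant depends only on $E_{0,\ep},E_{1,\ep},T$, hence is uniform in $\ep$; taking $\ep\to 0$ in this quantitative statement of course degenerates, but the qualitative bound $\rho_\ep\le 1$ persists and is all that is needed here.

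There is no genuine obstacle to overcome: the proof is a one-line assembly of previously established lemmas, and the only thing to check carefully is that the constants $C$ in those lemmas depend only on $E_{0,\ep}+E_{1,\ep}$ (and possibly on $T$) and not on other $\ep$-dependent quantities such as $\lrho_\ep$ or $\urho_\ep$, which is indeed the case for the four estimates listed in the proposition.
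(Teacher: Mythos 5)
Your proposal is correct and coincides with the paper's own (essentially tacit) argument: the proposition is stated there with the single remark that Lemmas \ref{lem:energy} and \ref{lem:BD-entropy} are already uniform in $\ep$, and the bound $\|\rho_\ep\|_{L^\infty_{t,x}}\le 1$ is obtained exactly as you describe, from the $L^\infty_t L^1_x$ control of the singular part of $H_\ep$ (this is the first line of the proof of Lemma \ref{lem:up_rho}). Your check that none of the constants involve $\lrho_\ep$ or $\urho_\ep$ is the right point to verify, and it holds.
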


Note that so far we are lacking the uniform bound on the singular part of the potential $\pi_\ep(\re)$ (recall Remark~\ref{rmk:deg-piep} and estimate~\eqref{eq:deg-pep}). This is the purpose of the next lemma.
\begin{lem}\label{lem:uni_p}
Let the conditions of the previous proposition be satisfied and assume furthermore the condition~\eqref{hyp:mass}.
We have then
\begin{equation}\label{est_un_pi}
\|\pi_\ep(\rho_\ep)\|_{L^\infty_t L^1_x} + \|\partial_x \pi_\ep(\rho_\ep)\|_{L^\infty_t L^2_x} \leq C,
\end{equation} 
for a positive constant $C$ independent of $\ep$.
\end{lem}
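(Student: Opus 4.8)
The plan is to prove the uniform bound on $\pi_\ep(\re)$ by combining the $L^\infty_t L^1_x$ control of $H^s_\ep(\re)$ (which follows from the BD entropy, Lemma~\ref{lem:BD-entropy}) with the control of $\partial_x \pi_\ep(\re)$ coming from the energy and BD estimates, and then upgrading via a Poincar\'e--Sobolev argument that crucially uses the mass hypothesis~\eqref{hyp:mass}. The first step is to establish $\|\partial_x \pi_\ep(\re)\|_{L^\infty_t L^2_x} \leq C$: recall from the BD entropy that $\|\sqrt{\re}\,\partial_x(p_\ep(\re)+\varphi_\ep(\re))\|_{L^2_t L^2_x}$ is controlled, and in fact (as used in the proof of Lemma~\ref{lem:up_rho}) one gets $\|\sqrt{\re}\, p_\ep'(\re)\partial_x\re\|_{L^\infty_t L^2_x}^2 \leq C(E_{1,\ep}(1+T)+E_{0,\ep})$. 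Since $\pi_\ep'(\re) = \re p_\ep'(\re) + \re\varphi_\ep'(\re) = \sqrt{\re}\cdot \sqrt{\re}\, p_\ep'(\re) + \ep \re^\alpha$, and $\re \leq 1$, we can write $\partial_x \pi_\ep(\re) = \sqrt{\re}\cdot\big(\sqrt{\re}\,p_\ep'(\re)\partial_x\re\big) + \partial_x(\varphi_\ep(\re)\cdot \text{correction})$, and bound each piece in $L^\infty_t L^2_x$ uniformly, using that $\re \leq 1$ to discard the $\sqrt{\re}$ factor and using the separately established $L^\infty_t L^2_x$ bound on $\partial_x(\ep\re^{\alpha-1/2})$ from Lemma~\ref{lem:low_rho} (note the exponent on $\varphi_\ep$ is below $1/2$ so this term is mild).

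The second, and genuinely delicate, step is to pass from the gradient bound to the $L^\infty_t L^1_x$ bound on $\pi_\ep(\re)$ itself, or equivalently an $L^\infty$ bound on $\re$ strictly below $1$ that is compatible with $\pi_\ep$ staying bounded. Here one uses the mass constraint: by~\eqref{hyp:mass}, $\int_\T \re \, dx \leq \overline{M}^0 < |\T|$, so at each time there exists a point $x_*(t)$ with $\re(t,x_*(t)) \leq \overline{M}^0/|\T| < 1$, hence $\pi_\ep(\re(t,x_*(t)))$ is bounded by a constant independent of $\ep$ (because the argument is bounded away from $1$ uniformly, and $\pi_\ep$ evaluated there is $O(\ep)$, in fact vanishing). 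Then for any $x$,
\[
\pi_\ep(\re(t,x)) = \pi_\ep(\re(t,x_*(t))) + \int_{x_*(t)}^x \partial_x \pi_\ep(\re(t,y))\, dy \leq C\ep + |\T|^{1/2} \|\partial_x \pi_\ep(\re(t,\cdot))\|_{L^2_x} \leq C,
\]
which gives $\|\pi_\ep(\re)\|_{L^\infty_{t,x}} \leq C$, and a fortiori the $L^\infty_t L^1_x$ bound since $|\T|$ is finite. Combining this with the gradient bound from Step~1 yields~\eqref{est_un_pi}.

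The main obstacle is making sure the constants in Step~1 are genuinely $\ep$-uniform: the raw BD estimate produces $\|\sqrt{\re}\,\partial_x(p_\ep+\varphi_\ep)(\re)\|_{L^2_tL^2_x}$, not $\partial_x\pi_\ep$, and the relation $\pi_\ep' = \re(p_\ep'+\varphi_\ep')$ means one must carefully track how the extra factor of $\re$ and the algebra of $p_\ep' / p_\ep$ interact with the singularity near $\re = 1$; this is precisely the computation already carried out inside the proof of Lemma~\ref{lem:up_rho}, so I would invoke the intermediate bound $\|\sqrt{\re}\, p_\ep'(\re)\partial_x\re\|_{L^\infty_t L^2_x}^2 \leq C(E_{1,\ep}(1+T)+E_{0,\ep})$ established there rather than redo it. A secondary subtlety is that $H^s_\ep$ and $\pi_\ep$ differ (one is $\int_0^\re p_\ep$, the other is governed by $\int_0^\re r p_\ep'(r)\,dr$, which near $1$ behaves like $\ep(1-\re)^{-\beta}$ up to lower order, i.e. like $(1-\re) p_\ep(\re)$), so one should check the leading-order asymptotics $\pi_\ep(r) \sim \ep (1-r)^{-\beta}$ as $r\to 1^-$ to confirm that the pointwise bound on $\pi_\ep$ translates into the desired bound and is consistent with the upper bound $\urho_\ep$ from Lemma~\ref{lem:up_rho}; with $\re \leq 1 - C(\ep/(1+\sqrt T))^{1/(\beta-1)}$ one indeed gets $\pi_\ep(\re) \lesssim \ep^{-1/(\beta-1)}(1+\sqrt T)^{\beta/(\beta-1)}$ from the crude bound, but the Poincar\'e argument above gives the sharper $\ep$-independent constant, which is what~\eqref{est_un_pi} requires.
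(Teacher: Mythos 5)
Your proof is correct, and it takes a genuinely different route from the paper's. The paper's argument is in two blocks: first, it tests the momentum equation~\eqref{eq:AW-2_mom} against the Bogovskii-type test function $\psi(t,x)=\int_0^x\big(\rho_\ep(t,y)-\langle\rho_\ep\rangle\big)\,dy$, splits the resulting integral using the mass hypothesis~\eqref{hyp:mass}, and obtains the bound~\eqref{p_control} on $\int_0^t\!\int\rho_\ep^2p_\ep'(\rho_\ep)\partial_xu_\ep$; then it writes a transport equation~\eqref{eq:potential} for $s_\ep=\rho_\ep p_\ep(\rho_\ep)$ whose right-hand side is precisely that controlled quantity, giving $\|s_\ep\|_{L^\infty_tL^1_x}\leq C$, and finally transfers the bound to $\pi_\ep$ by comparing the $(1-\rho)^{-\beta}$ singularities. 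Your argument instead gets the gradient bound first (the paper obtains it too, but only at the very end and not for use in the $L^1$ step) via the cleaner identity $\partial_x\pi_\ep(\rho_\ep)=\rho_\ep(w_\ep-u_\ep)=\sqrt{\rho_\ep}\cdot\sqrt{\rho_\ep}(w_\ep-u_\ep)$, so that the energy and BD bounds give $\|\partial_x\pi_\ep\|_{L^\infty_tL^2_x}\leq C$ directly (you instead route through the intermediate estimates of Lemmas~\ref{lem:up_rho} and~\ref{lem:low_rho}, which also works but is slightly more convoluted). You then convert the mass constraint~\eqref{hyp:mass} into a per-time anchor point $x_*(t)$ with $\rho_\ep(t,x_*(t))\leq\overline{M}^0/|\T|<1$, where $\pi_\ep$ is $O(\ep)$, and propagate via the fundamental theorem of calculus plus Cauchy--Schwarz. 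This is more elementary, avoids the momentum equation entirely, and in fact yields the strictly stronger conclusion $\|\pi_\ep\|_{L^\infty_{t,x}}\leq C$, from which the stated $L^\infty_tL^1_x$ bound and even $\|\pi_\ep\|_{L^\infty_tH^1_x}\leq C$ follow at once. Two caveats are worth noting: (i) the paper's route deliberately produces the intermediate control~\eqref{p_control}, which the authors immediately reuse in Corollary~\ref{cor:control-unif-V} for $\|\lambda_\ep(\rho_\ep)\partial_xu_\ep\|_{L^1_{t,x}}$; if you replace the proof of Lemma~\ref{lem:uni_p} by yours, that corollary would need its own independent argument; and (ii) your FTC step is specific to one space dimension, whereas the momentum-test argument generalizes to multi-D via the Bogovskii operator, which is likely why the authors kept it even in the 1D setting.
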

\begin{proof}
We first multiply Eq~\eqref{eq:AW-2_mom} by $\psi(t,x) = \int_0^x\big(\rho_\ep(t,y) -<\rho_\ep>\big) dy$, where
\[
<\rho_\ep>  :=\frac{M^0_\ep}{|\T|},
\]
to get after time and space integration:
\eqh{
\int_0^t \int \rho_\ep^2 p'_\ep(\rho_\ep) \partial_x u_\ep \big(\rho_\ep - <\rho_\ep>\big) dx \, dt
= &-\int_0^t \int \rho_\ep^2 \varphi'_\ep(\rho_\ep) \partial_x u_\ep \big(\rho_\ep - <\rho_\ep>\big) dx \, dt\\
&+
\int_0^t \int(\rho_\ep u_\ep) \partial_t \psi \, dx \, dt
	+ \int_0^t \int \rho_\ep u_\ep^2 \big(\rho_\ep - <\rho_\ep>\big) dx \, dt.
}
From \eqref{est_un_ep} we infer that
\[
\left| \int_0^t \int \rho_\ep^2 p'_\ep(\rho_\ep) \partial_x u_\ep \big(\rho_\ep - <\rho_\ep>\big) dx \, dt \right| \leq C.
\]
Recall that we assumed that there exist $\overline{M}^0, \underline{M}^0$, independent of $\ep$ such that
\[
0 < \underline{M}^0 \leq M^0_\ep \leq \overline{M}^0.
\]
From the hypotheses~\eqref{hyp:rho}-\eqref{hyp:mass}, we ensure that
\begin{equation}\label{hyp:mean}
<\rho_\ep> \leq \hat \rho =\frac{\overline{M}^0}{|\T|}< 1,
\end{equation}
and we define $\rho^m_\ep = \dfrac{1 + <\rho_\ep>}{2} \leq \dfrac{1 + \hat\rho}{2}  $. 
We then split the previous integral into two parts, depending on the value of $\rho_\ep$. When $\rho_\ep \leq \rho^m_\ep$, the pressure remains far from the singularity uniformly with respect to $\ep$ and therefore
\begin{align*}
& \left| \int_0^t \int \rho_\ep^2 p'_\ep(\rho_\ep) \partial_x u_\ep \big(\rho_\ep - <\rho_\ep>\big) \mathbf{1}_{\{\rho_\ep \leq \rho_\ep^m\}} dx \, dt \right| \\
& \leq C \|\sqrt{p'_\ep(\rho_\ep)} \mathbf{1}_{\{\rho_\ep \leq \rho_\ep^m\}}\|_{L^2_t L^2_x} \|\rho_\ep \sqrt{p'_\ep(\rho_\ep)} \partial_x u_\ep \|_{L^2_t L^2_x} \\
& \leq C
\end{align*}
thanks to \eqref{est_un_ep}.
Hence
\begin{align*}
C & \geq \left| \int_0^t \int \rho_\ep^2 p'_\ep(\rho_\ep) \partial_x u_\ep \big(\rho_\ep - <\rho_\ep>\big) \mathbf{1}_{\{\rho_\ep > \rho_\ep^m\}} dx \, dt \right| \\
& \geq \frac{1-\hat \rho}{2}\left| \int_0^t \int \rho_\ep^2 p'_\ep(\rho_\ep) \partial_x u_\ep  \mathbf{1}_{\{\rho_\ep > \rho_\ep^m\}} dx \, dt \right|. 
\end{align*}
summarizing, we have shown that
\begin{equation}\label{p_control}
\left| \int_0^t\int_\T \rho_\ep^2 p'_\ep(\rho_\ep) \partial_x u_\ep  dx \, dt \right| \leq C.
\end{equation}
Next, note that for $\ep$ fixed, $s_\ep=\re p_\ep(\re)$ satisfies the following equation
\begin{equation}\label{eq:potential}
\partial_t s_\ep(\rho_\ep) + \partial_x (s_\ep(\rho_\ep) u_\ep) = - \rho_\ep^2 p'_\ep(\rho_\ep) \partial_x u_\ep.
\end{equation}
From this and the estimate \eqref{p_control}, we can deduce that 
\begin{equation*}
\|s_\ep(\rho_\ep) \|_{L^\infty_t L^1_x} \leq C.
\end{equation*}
Now, considering the cases of $\re$ far away from 1 and close to 1 separately and observing that $s_\ep(\rho)$ and $\pi_\ep(\rho)$ have the same singularity when $\rho$ is close to $1$, we obtain
\begin{equation*}
 \|\pi_\ep(\rho_\ep) \|_{L^\infty_t L^1_x} \leq C.
\end{equation*}
Regarding the control of the gradient $\partial_x \pi_\ep(\rho_\ep)$, we simply use estimate~\eqref{est_un_ep} to write
\begin{align*}
\|\partial_x \pi_\ep(\rho_\ep)\|_{L^\infty_t L^2_x}
& \leq \|\sqrt{\rho_\ep} \|_{L^\infty_{t,x}} 
\|\sqrt{\rho_\ep} \partial_x \big(p_\ep(\rho_\ep) + \varphi_\ep(\rho_\ep) \big) \|_{L^\infty_t L^2_x} \\
& \leq \|\sqrt{\rho_\ep} \|_{L^\infty_{t,x}}
\Big(\|\sqrt{\rho_\ep} w_\ep \|_{L^\infty_t L^2_x} + \|\sqrt{\rho_\ep} u_\ep \|_{L^\infty_t L^2_x}\Big)
\leq C.
\end{align*}
\end{proof}

\bigskip
From the control~\eqref{p_control}, we directly infer the next result.
\begin{cor}\label{cor:control-unif-V}
Under the same hypotheses, we have
\begin{equation}
\|\lambda_\ep(\rho_\ep) \partial_x u_\ep\|_{L^1_{t,x}} \leq C,
\end{equation}
for some $C> 0$ independent of $\ep$.
\end{cor}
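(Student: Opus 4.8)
The plan is to exploit the relation $\lambda_\ep(\rho_\ep) = \rho_\ep^2 p'_\ep(\rho_\ep) + \rho_\ep^2 \varphi'_\ep(\rho_\ep)$ and to estimate the two contributions separately. The singular part $\rho_\ep^2 p'_\ep(\rho_\ep) \partial_x u_\ep$ is already controlled in $L^1_{t,x}$ by the estimate~\eqref{p_control} established in the proof of Lemma~\ref{lem:uni_p}, so nothing remains to be done there. It therefore suffices to bound the approximation contribution $\rho_\ep^2 \varphi'_\ep(\rho_\ep) \partial_x u_\ep = \ep \rho_\ep^{\alpha+1} \partial_x u_\ep$ in $L^1_{t,x}$ uniformly in $\ep$.

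First I would write, using Cauchy--Schwarz in $x$ and then in $t$,
\[
\int_0^T\!\!\int_\T |\ep \rho_\ep^{\alpha+1} \partial_x u_\ep| \, dx \, dt
\leq \int_0^T \|\sqrt{\ep}\, \rho_\ep^{\alpha/2+1/2}\|_{L^2_x} \,\|\sqrt{\ep}\, \rho_\ep^{\alpha/2+1/2}\partial_x u_\ep\|_{L^2_x}\, dt,
\]
and then observe that $\ep \rho_\ep^{\alpha+1} = \rho_\ep^2 \varphi'_\ep(\rho_\ep)$ is exactly the non-singular part $\lambda_\ep^n$ of $\lambda_\ep$, so $\sqrt{\ep}\,\rho_\ep^{\alpha/2+1/2}\partial_x u_\ep = \sqrt{\lambda_\ep^n(\rho_\ep)}\,\partial_x u_\ep \leq \sqrt{\lambda_\ep(\rho_\ep)}\,\partial_x u_\ep$ in the $L^2$-norm sense (since $\lambda_\ep^s \geq 0$). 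Hence $\|\sqrt{\ep}\,\rho_\ep^{\alpha/2+1/2}\partial_x u_\ep\|_{L^2_t L^2_x} \leq \|\sqrt{\lambda_\ep(\rho_\ep)}\,\partial_x u_\ep\|_{L^2_t L^2_x} \leq C$ by the energy estimate~\eqref{eq:energy} (and the uniform bound on $E_{0,\ep}$). For the remaining factor, since $0 \leq \rho_\ep \leq 1$ we have $\ep \rho_\ep^{\alpha+1} \leq \ep$, so $\|\sqrt{\ep}\,\rho_\ep^{\alpha/2+1/2}\|_{L^\infty_t L^2_x} \leq \sqrt{\ep}\,|\T|^{1/2} \to 0$; in particular it is bounded uniformly in $\ep$. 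Combining the two gives
\[
\|\rho_\ep^2 \varphi'_\ep(\rho_\ep) \partial_x u_\ep\|_{L^1_{t,x}} \leq \sqrt{\ep}\,|\T|^{1/2}\, C \leq C,
\]
and adding~\eqref{p_control} yields $\|\lambda_\ep(\rho_\ep)\partial_x u_\ep\|_{L^1_{t,x}} \leq C$.

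There is essentially no obstacle here: the corollary is a direct bookkeeping consequence of the triangle inequality $|\lambda_\ep \partial_x u_\ep| \leq |\rho_\ep^2 p'_\ep(\rho_\ep)\partial_x u_\ep| + |\rho_\ep^2 \varphi'_\ep(\rho_\ep)\partial_x u_\ep|$, the already-proven control~\eqref{p_control} of the singular term, and the energy bound~\eqref{eq:energy} together with the trivial pointwise bound $\rho_\ep \leq 1$ for the approximation term. The only mild subtlety worth spelling out is that one must bound the approximation term by the full $\sqrt{\lambda_\ep}\,\partial_x u_\ep$ rather than by $\sqrt{\lambda_\ep^n}\,\partial_x u_\ep$ alone, which is legitimate because $\lambda_\ep^s = \rho_\ep^2 p'_\ep(\rho_\ep) \geq 0$; alternatively one could simply note that the energy estimate directly controls $\|\sqrt{\lambda_\ep^n}\,\partial_x u_\ep\|_{L^2_tL^2_x}$ as well since both $\lambda_\ep^s$ and $\lambda_\ep^n$ are nonnegative summands of $\lambda_\ep$. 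Either way the argument is a couple of lines.
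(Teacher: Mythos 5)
Your decomposition of $\lambda_\ep(\rho_\ep)\partial_x u_\ep$ into the singular contribution $\rho_\ep^2 p'_\ep(\rho_\ep)\partial_x u_\ep$ and the approximation contribution, and the Cauchy--Schwarz bound on the latter, are the right instincts (minor slip: $\rho_\ep^2\varphi'_\ep(\rho_\ep)=\ep\rho_\ep^{\alpha}$, not $\ep\rho_\ep^{\alpha+1}$, since $\varphi'_\ep(\rho)=\ep\rho^{\alpha-2}$; this does not change the argument). The paper itself only says the corollary is ``directly inferred'' from~\eqref{p_control}, so at the level of granularity the paper gives, your plan matches.

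However, there is a genuine gap in how you handle the singular part. The estimate~\eqref{p_control} bounds the \emph{signed} space--time integral $\bigl|\int_0^t\!\int_\T \rho_\ep^2 p'_\ep(\rho_\ep)\partial_x u_\ep\,dx\,dt\bigr|$, not the $L^1_{t,x}$ norm $\int_0^t\!\int_\T |\rho_\ep^2 p'_\ep(\rho_\ep)\partial_x u_\ep|\,dx\,dt$. Since $\rho_\ep^2 p'_\ep(\rho_\ep)\geq 0$ while $\partial_x u_\ep$ changes sign, these two quantities are genuinely different, and the first does not control the second. You also cannot simply run Cauchy--Schwarz on this piece as you did for the approximation piece: that would require a uniform bound on $\|\rho_\ep\sqrt{p'_\ep(\rho_\ep)}\|_{L^2_{t,x}}$, i.e.\ on $\|\rho_\ep^2 p'_\ep(\rho_\ep)\|_{L^1_{t,x}}$, and $\rho_\ep^2 p'_\ep(\rho_\ep)$ blows up like $\ep(1-\rho_\ep)^{-(\beta+1)}$, one power worse than $\pi_\ep(\rho_\ep)$; the uniform $L^\infty_t L^1_x$ control of $\pi_\ep$ from Lemma~\ref{lem:uni_p} is not strong enough to give that. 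The missing ingredient is a one-sided bound on $\partial_x u_\ep$: from Lemma~\ref{lem:Oleinik}, specifically the intermediate inequality~\eqref{almost_Oleinik}, one gets the pointwise upper bound $V_\ep=\lambda_\ep(\rho_\ep)\partial_x u_\ep \leq \underline{\lambda}_\ep\, A/(At+1)\leq C\ep$ uniformly in $(t,x)$ and $\ep$, hence $\|V_\ep^+\|_{L^1_{t,x}}\leq C$. Combining this with the signed-integral control (e.g.\ via $\int |V_\ep| = 2\int V_\ep^+ - \int V_\ep$, and bounding $\bigl|\int V_\ep\bigr|$ from~\eqref{p_control} plus the small $\ep\rho_\ep^\alpha$ term) is what actually delivers the $L^1_{t,x}$ bound; the signed estimate alone does not.
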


\subsection{One-sided Lipschitz condition on $\partial_x u_\ep$}
The purpose of this subsection is to prove that $u_\ep$ satisfies the {\emph{Oleinik entropy condition}}, i.e. $\partial_x u_\ep \leq 1/t$. 
\begin{lem}\label{lem:Oleinik}
Let $\re,\ue$ be solution to system \eqref{eq:AW-2} and assume that initially
\eq{\esssup {(\lambda_\ep(\rho_\ep^0)\partial_x u_\ep^0)}\leq C}
uniformly w.r.t. $\ep$, and let us denote
\eq{ A = \max (\esssup {(\lambda_\ep(\rho_\ep^0)\partial_x u_\ep^0)}, 0).}
Then we have the $\ep-$uniform estimate:
\eq{
\px\ue (t,x) \leq \frac{A}{At+1}\leq \frac1t \qquad \forall \ t \in \ ]0,T], ~ x \in \mathbb{T}.
}

\end{lem}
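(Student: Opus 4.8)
The plan is to derive a scalar differential inequality for $V_\ep = \lambda_\ep(\rho_\ep)\partial_x u_\ep$ along characteristics and then convert it into a pointwise bound on $\partial_x u_\ep$. Recall from Lemma~\ref{prop:Ve} that $V_\ep$ solves
\[
\partial_t V_\ep + \left(u_\ep + \frac{\lambda_\ep(\rho_\ep)}{\rho_\ep^2}\partial_x\rho_\ep\right)\partial_x V_\ep - \frac{\lambda_\ep(\rho_\ep)}{\rho_\ep}\partial_x^2 V_\ep = -\frac{\lambda'_\ep(\rho_\ep)\rho_\ep + \lambda_\ep(\rho_\ep)}{\lambda_\ep(\rho_\ep)^2}V_\ep^2.
\]
First I would check the sign of the coefficient in front of $V_\ep^2$: since $\lambda_\ep(\rho) = \rho^2 p'_\ep(\rho) + \rho^2\varphi'_\ep(\rho)$ with $p_\ep,\varphi_\ep$ increasing, one has $\lambda_\ep > 0$, and a direct computation of $\lambda'_\ep(\rho)\rho + \lambda_\ep(\rho)$ using the explicit forms in \eqref{def:pvar} shows this quantity is bounded below by $\lambda_\ep(\rho)$ (indeed $\lambda'_\ep\rho+\lambda_\ep = \partial_\rho(\rho\lambda_\ep)$, and $\rho\lambda_\ep$ is increasing with derivative at least $\lambda_\ep$ because the extra terms are nonnegative). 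Hence the right-hand side is bounded above by $-V_\ep^2/\lambda_\ep(\rho_\ep)$.

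Next I would apply the parabolic maximum principle. Let $M(t) = \max_{x\in\T} V_\ep(t,x)$, which is Lipschitz in $t$ by the regularity \eqref{eq:regularity} (using e.g. Proposition~\ref{prop-reg-m3}); at an interior maximum point $x_t$ one has $\partial_x V_\ep = 0$ and $\partial_x^2 V_\ep \leq 0$, so the transport and diffusion terms drop out or have a favorable sign, giving for a.e. $t$
\[
M'(t) \leq -\frac{M(t)^2}{\lambda_\ep(\rho_\ep(t,x_t))}.
\]
The key point now is that whenever $M(t) \geq 0$ we may estimate $\lambda_\ep(\rho_\ep(t,x_t))\partial_x u_\ep(t,x_t) = M(t)$, hence $M(t)^2/\lambda_\ep(\rho_\ep(t,x_t)) = M(t)\,\partial_x u_\ep(t,x_t)$; but that still has $\lambda_\ep$ in it. The cleaner route: as long as $M(t)\ge 0$, write $M(t)^2/\lambda_\ep \ge M(t)^2/\lambda_\ep(\rho_\ep(t,x_t))$ and note $M(t) = \lambda_\ep(\rho_\ep(t,x_t))\,(\partial_x u_\ep)(t,x_t)$, so $M(t)^2/\lambda_\ep(\rho_\ep(t,x_t)) = M(t)(\partial_x u_\ep)(t,x_t)$. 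Since $(\partial_x u_\ep)(t,x_t) = M(t)/\lambda_\ep(\rho_\ep(t,x_t))$ need not be comparable to $M(t)$, the argument in the paper must instead track $\|\partial_x u_\ep\|_{L^\infty_x}$ directly. The correct observation is that the ODE comparison should be run for the quantity $N(t) := \esssup_x (\lambda_\ep(\rho_\ep)\partial_x u_\ep)(t,\cdot) = M(t)$ but compared against the Riccati ODE $y' = -y^2/\lambda_\ep$ is not autonomous; so instead one uses that at the max point $\lambda_\ep>0$ and simply that $M'(t)\le 0$ once $M(t)\le 0$, while for $M(t)>0$ we get $M'(t)\le -c\,M(t)\,\partial_x u_\ep(t,x_t)$ and $\partial_x u_\ep(t,x_t)=M(t)/\lambda_\ep(\rho_\ep(t,x_t))>0$. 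Putting $B(t):=\partial_x u_\ep(t,x_t)$, we actually get from dividing that $(\partial_x u_\ep)$ at the maximizer of $V_\ep$ satisfies a Riccati inequality $B'\le -B^2$ after accounting for the evolution of $\lambda_\ep$ along the flow; this is exactly where the structure $\lambda_\ep'\rho_\ep+\lambda_\ep\ge \lambda_\ep$ gets used, since $\partial_t\lambda_\ep + u_\ep\partial_x\lambda_\ep = -\lambda_\ep'\rho_\ep\partial_x u_\ep$.

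The cleanest presentation, which I would adopt, is to introduce $D_\ep := \partial_x u_\ep$ directly and derive its equation by differentiating the momentum equation divided by $\rho_\ep$: from $\partial_t u_\ep + u_\ep\partial_x u_\ep = \frac1{\rho_\ep}\partial_x V_\ep$ one gets $\partial_t D_\ep + u_\ep\partial_x D_\ep + D_\ep^2 = \partial_x(\tfrac1{\rho_\ep}\partial_x V_\ep)$, and then combining with the $V_\ep$-equation one can show the parabolic operator applied to $D_\ep$ is bounded above by $-D_\ep^2$. Then $g(t):=\esssup_x D_\ep(t,\cdot)$ satisfies $g'(t)\le -g(t)^2$ in the a.e./viscosity sense by the maximum principle (diffusion term $\le 0$, transport term vanishes at the interior max), with $g(0)\le A/\lambda$... — more precisely $g(0)\le\esssup(\lambda_\ep(\rho^0_\ep)\partial_x u^0_\ep)/\inf\lambda_\ep(\rho^0_\ep)$, which is not obviously bounded. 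So in fact one must run the comparison on $V_\ep$ at its max and separately argue: if $M(t_0)\le 0$ for some $t_0$ then $M\le 0$ for all $t\ge t_0$ and $\partial_x u_\ep\le 0\le 1/t$; and on the set where $M>0$, since $\partial_x u_\ep = M/\lambda_\ep$ and $M' \le -M^2/\lambda_\ep = -M\,\partial_x u_\ep$, dividing and using $\partial_t\lambda_\ep+u_\ep\partial_x\lambda_\ep=-\rho_\ep\lambda_\ep'\partial_x u_\ep$ together with $\rho_\ep\lambda_\ep'+\lambda_\ep\ge\lambda_\ep$, one gets $\frac{d}{dt}\big(\partial_x u_\ep\big)\le -(\partial_x u_\ep)^2$ evaluated along the characteristic through the maximizer. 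Integrating the Riccati inequality $\phi'\le -\phi^2$ with $\phi(0)\le A$ yields $\phi(t)\le \frac{A}{At+1}\le \frac1t$, which is the claim.

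\textbf{Main obstacle.} The delicate point is the rigorous justification of the maximum-principle step: $V_\ep$ (or $\partial_x u_\ep$) is only as regular as \eqref{eq:regularity}/Proposition~\ref{prop-reg-m3} allow, so one must argue that $M(t)=\max_x V_\ep(t,\cdot)$ is Lipschitz and that at a.e.\ $t$ its derivative is controlled by the PDE evaluated at a maximizer (Hamilton's trick / Rademacher), and handle the interplay between the evolution of the coefficient $\lambda_\ep(\rho_\ep)$ along characteristics and the Riccati term — this is precisely where the algebraic inequality $\rho_\ep\lambda_\ep'(\rho_\ep)+\lambda_\ep(\rho_\ep)\ge\lambda_\ep(\rho_\ep)$ (equivalently monotonicity of $\rho\mapsto\rho\lambda_\ep(\rho)$) is essential and must be verified from the explicit formulas \eqref{def:pvar}. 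The transport and second-order terms are handled for free by the sign conditions at an interior extremum on the torus.
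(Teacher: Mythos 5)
Your plan takes a pointwise maximum-principle (Hamilton's trick / Riccati comparison) route, while the paper uses a fundamentally different, integral renormalization argument, and the pointwise route as written has several gaps that do not close.

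\textbf{Gap 1: the maximizer of $V_\ep$ is not the maximizer of $\partial_x u_\ep$.} At a spatial maximum $x_t$ of $V_\ep(t,\cdot)$ the Hamilton-trick computation does give $M'(t)\le -c(t)\,M(t)^2$ with $c(t)=\big(\lambda_\ep'\rho_\ep+\lambda_\ep\big)/\lambda_\ep^2\big|_{(t,x_t)}>0$, but the quantity you want to bound is $\sup_x\partial_x u_\ep(t,x)=\sup_x V_\ep/\lambda_\ep(\rho_\ep)$. Since $\lambda_\ep(\rho_\ep(t,\cdot))$ varies in space, the maximum of $V_\ep$ and the maximum of $V_\ep/\lambda_\ep$ are in general at different points, so the inequality for $M(t)=\max V_\ep$ does not transfer to a bound on $\partial_x u_\ep$.

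\textbf{Gap 2: the equation for $D_\ep=\partial_x u_\ep$ has a sign-indefinite zeroth-order term.} If instead you differentiate $\partial_t u_\ep+u_\ep\partial_x u_\ep=\rho_\ep^{-1}\partial_x V_\ep$ and expand, you get
\[
\partial_t D_\ep + u_\ep\partial_x D_\ep - \frac{\lambda_\ep}{\rho_\ep}\partial_x^2 D_\ep
  -\Big(\partial_x\big(\tfrac{\lambda_\ep}{\rho_\ep}\big)+\tfrac{\partial_x\lambda_\ep}{\rho_\ep}\Big)\partial_x D_\ep
  = -D_\ep^2 + \partial_x\Big(\tfrac{\partial_x\lambda_\ep}{\rho_\ep}\Big)D_\ep ,
\]
and the term $\partial_x\big(\partial_x\lambda_\ep/\rho_\ep\big)D_\ep$ has no favorable sign, so the clean Riccati comparison $g'\le -g^2$ for $g(t)=\max_x D_\ep$ does not follow from the maximum principle.

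\textbf{Gap 3: the ``characteristic through the maximizer'' is not a characteristic.} In the last variant you differentiate $\lambda_\ep(\rho_\ep(t,x_t))$ using $\partial_t\lambda_\ep+u_\ep\partial_x\lambda_\ep=-\rho_\ep\lambda'_\ep\partial_x u_\ep$, i.e.\ as if $\dot x_t=u_\ep(t,x_t)$. But $x_t$ is the (moving) maximizer of $V_\ep$, not a Lagrangian trajectory of $u_\ep$; the additional term $\dot x_t\,\partial_x\lambda_\ep$ has no reason to reduce to $u_\ep\partial_x\lambda_\ep$. (At a maximum of $V_\ep$ one has $\partial_x V_\ep=0$, not $\partial_x\lambda_\ep=0$.)

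\textbf{What the paper does instead.} The paper sidesteps all three issues via an $L^1$ renormalization argument. It introduces the rescaled quantity $\widetilde V_\ep=(At+1)V_\ep/\underline\lambda_\ep$ with $\underline\lambda_\ep=\ep\underline\rho_\ep^\alpha\le\lambda_\ep(\rho_\ep)$, writes its PDE in conservation form \eqref{tVe_conserv}, multiplies by $S'(\widetilde V_\ep)$ for a convex regularization $S=F_\eta$ of $(\cdot-A)_+$, integrates in space, and applies Gronwall. The Gronwall constants are $\ep$-dependent (through $\underline\rho_\ep$ and the terms $\|\partial_x u_\ep\|_{L^1}$, $\|1-1/\lambda_\ep\|_{L^\infty}$, $\|\partial_x(\partial_x\lambda_\ep/\rho_\ep)\|_{L^1_tL^\infty_x}$), but this is harmless: after sending $\eta\to0$ the initial contribution $\int(\widetilde V_\ep(0)-A)_+$ vanishes, so the exponential factor multiplies zero. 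The conclusion $\widetilde V_\ep\le A$, together with $\lambda_\ep\ge\underline\lambda_\ep$, yields $\partial_x u_\ep\le\frac{A}{At+1}$ uniformly in $\ep$. The rescaling by $\underline\lambda_\ep$ is exactly what lets the final statement be $\ep$-uniform while the intermediate estimates are not — a device your maximum-principle plan does not recover.

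Finally, note that your ``cleanest presentation'' via the equation for $D_\ep$ also requires an a priori uniform-in-$\ep$ bound on $g(0)=\esssup\partial_x u^0_\ep$, which as you observe is \emph{not} assumed (only $\esssup(\lambda_\ep(\rho^0_\ep)\partial_x u^0_\ep)\le C$ is), so even the initial condition for the Riccati comparison is out of reach without the $\underline\lambda_\ep$-rescaling.
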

\begin{proof}
The starting point is derivation of equation for 
\[
\tV_\ep = (At +1)\frac{V_\ep}{\underline{\lambda}_\ep}, 
\quad \mbox{where} \ \underline{\lambda}_\ep=\ep\lrho_\ep^\alpha.
\]
Similarly to proof of Proposition \ref{prop:Ve} we can show that
\eq{\label{tVe_conserv}
&\pt \tV_\ep+\px(\ue \tV_\ep)-\px\lr{\frac{\lambda_\ep(\re)}{\re}\px \tV_\ep}\\
&=\frac{A}{At+1}\tV_\ep-\frac{\lambda_\ep'(\re)\re \underline{\lambda}_\ep}{At+1}\lr{\frac{\tV_\ep}{\lambda_\ep(\re)}}^2-\frac{\px \lambda_\ep(\re)}{\re}\px \tV_\ep,
}
which for $\ep$ fixed holds pointwisely. We now derive the renormalised equation for $\tV_\ep$. To this purpose we multiply \eqref{tVe_conserv} by $S'(\tV_\ep)$, where $S$ is smooth, increasing and convex function, we obtain
\eq{\label{S_conserv}
&\pt S(\tV_\ep)+\px(\ue S(\tV_\ep))-\px\lr{\frac{\lambda_\ep(\re)}{\re}S'(\tV_\ep)\px \tV_\ep}\\
&=\lr{S(\tV_\ep)-S'(\tV_\ep)\tV_\ep}\px\ue
-S''(\tV_\ep)\frac{\lambda_\ep(\re)}{\re}\lr{\px \tV_\ep}^2\\
&+\frac{A}{At+1}S'(\tV_\ep)\tV_\ep
-\frac{S'(\tV_\ep)\lambda_\ep'(\re)\re \underline{\lambda}_\ep}{At+1}\lr{\frac{\tV_\ep}{\lambda_\ep(\re)}}^2-\frac{\px \lambda_\ep(\re)}{\re}\px S(\tV_\ep)
}
We set $S(y) = F_\eta(y)$ where $F_\eta$, $\eta > 0$, is a regularization of $(\cdot - A)_+$:
\begin{equation}\label{def:Fe}
F_\eta(y) =
\begin{cases}
0 \quad & \text{if}~ y \leq \eta \\
\dfrac{y-\eta}{2} + \dfrac{\eta}{2\pi} \sin(\pi \frac{y}{\eta}) \quad & \text{if}~ \eta \leq y \leq 2\eta \\
y- \dfrac{3\eta}{2} \quad & \text{if}~  y \geq 2\eta
\end{cases}
\end{equation}
For $\eta >0$ fixed, $F''_\eta \geq 0$, $0 \leq F'_\eta \leq 1$, and
\eq{\label{prop_kappa}
|F_\eta(y) - (y-A) F'_\eta(y)| \leq \lr{\frac{3}{2} + \frac{1}{2\pi}}\eta = \kappa \eta.
}
Note that for such choice of $S$ the second and the fourth terms on the r.h.s. of \eqref{S_conserv} are non-positive. Therefore, integrating \eqref{S_conserv} in space, we then get:
\eq{\label{ineq:renorm}
 \frac{d}{dt}\int F_\eta(\tV_\ep(t))  &\leq \int\Big|F_\eta(\tV_\ep) - F_\eta'(\tV_\ep)(\tV_\ep - A) \Big|  |\px \ue|\\
&\quad+\int \frac{A}{At+1}F'_\eta(\tV_\ep)\tV_\ep \left|1-\frac{1}{\lambda_\ep(\re)}\right|+ \int\left|\px\lr{\frac{\px\lambda_\ep(\re)}{\re}}\right|F_\eta(\tV_\ep).
}
The first term on the r.h.s. can be controlled using \eqref{prop_kappa}, \eqref{est_un_ep}, and the $\ep$-dependent bound from below for $\re$ \eqref{eq:low-bd-rho}, we obtain
\eq{
\int\Big|F_\eta(\tV_\ep) - F_\eta'(\tV_\ep)(\tV_\ep - A) \Big|  |\px \ue|\leq \kappa\eta\int |\px\ue|\leq \kappa C(T)\eta\ep^{-\alpha'}
}
with some $\alpha'>0$.
For the second term on the r.h.s. of \eqref{ineq:renorm} we compute
\begin{equation}
y F_\eta'(y) =
\begin{cases}
0 \quad & \text{if}~ y \leq \eta \\
\dfrac{y}{2} + \dfrac{y}{2} \cos(\pi \frac{y}{\eta}) \quad & \text{if}~ \eta \leq y \leq 2\eta \\
y \quad & \text{if}~  y \geq 2\eta,
\end{cases}
\end{equation}
and so $yF_\eta'(y)\leq 4 F_\eta(y)$ which can be then controlled by the Gronwall argument, since 
\eq{
\left\|1-\frac{1}{\lambda_\ep(\re)}\right\|_{L^\infty_t L^\infty_x}\leq C(T)\ep^{-\alpha''},
}
for some $\alpha''>0$.\\
Finally, the last term on the r.h.s. of \eqref{ineq:renorm} can also be controlled by the Gronwall's argument seeing that from Proposition~\ref{prop-reg-m3}:
\eq{
\left\|\px\lr{\frac{\px\lambda_\ep(\re)}{\re}}\right\|_{{L^1_t L^\infty_x}}\leq C(T)\ep^{-\alpha'''}}
for some $\alpha'''>0$.
Putting this together, and applying the Gronwall's inequality to \eqref{ineq:renorm} we obtain
\eq{\label{est:Fn}
&\int F_\eta(\tV_\ep(t))\\
&\leq \exp{\lr{\left\|\px\lr{\frac{\px\lambda_\ep(\re)}{\re}}\right\|_{L^1_t L^\infty_x}\!\!\!\!
+\log(At+1)\left\|1-\frac{1}{\lambda_\ep(\re)}\right\|_{L^\infty_t L^\infty_x}}}\int \left[F_\eta(\tV_\ep(0))+C(T)\kappa \eta t \ep^{-\alpha'}\right]\\
&\leq \exp{\lr{C(T)\ep^{-\alpha'''}
+\log(At+1)C(T)\ep^{-\alpha''}}}\left[\int F_\eta(\tV_\ep(0))+C(T)\kappa \eta t \ep^{-\alpha'}\right]
}
Passing to the limit $\eta\to 0$, we obtain
\eq{
&\int(\tV_\ep(t)-A)_+\leq \exp{\lr{C(T)\ep^{-\alpha'''}
+\log(At+1)C(T)\ep^{-\alpha''}}} \int(\tV_\ep(0)-A)_+.
}
Noticing that $\tV_\ep(0)=V_\ep(0)$ we therefore obtain
\eq{\int_\Omega(\tV_\ep(t)-A)_+\leq 0}
uniformly w.r.t. $\ep$ which implies
\eq{\label{almost_Oleinik}
\frac{\lambda_\ep(\re)}{\underline{\lambda}_\ep}\px\ue\leq \frac{A}{At+1}\leq \frac1t.}
For each $\ep>0$ we have ${\lambda_\ep(\re)}\geq{\underline{\lambda}_\ep}$, and  $\frac{\underline{\lambda}_\ep}{\lambda_\ep(\re)}\leq 1$, thus we get the required estimate uniformly in $\ep$.
\end{proof}
As a consequence of Lemma \ref{lem:Oleinik} and due to periodicity of the domain, we can  control the whole norm of the velocity gradient. 
\begin{cor}
We have
\eq{\label{est_uni_grad}
\|\partial_x u_\ep\|_{L^\infty_tL^1_x}\leq C}
for constant $C$ independent of $\ep$.
\end{cor}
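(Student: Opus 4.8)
The plan is to combine the one-sided bound just established in Lemma~\ref{lem:Oleinik} with the elementary fact that the spatial mean of a derivative vanishes on the torus. First, since $\ue(t,\cdot) \in H^3(\T) \subset \mathcal{C}^1(\T)$ for every $t \in [0,T]$ by Theorem~\ref{thm:main1}, the periodicity of the domain gives $\int_\T \px \ue(t,x)\,dx = 0$ for all $t$. Splitting $\px \ue = (\px \ue)_+ - (\px \ue)_-$ and using this cancellation, we get $\int_\T (\px \ue)_+(t,x)\,dx = \int_\T (\px \ue)_-(t,x)\,dx$, and therefore
\[
\|\px \ue(t,\cdot)\|_{L^1_x} = 2 \int_\T (\px \ue)_+(t,x)\,dx .
\]

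Next, Lemma~\ref{lem:Oleinik} provides the pointwise upper bound $\px \ue(t,x) \leq \frac{A}{At+1} \leq A$ for all $t \in \ ]0,T]$ and $x \in \T$, where $A = \max(\esssup(\lambda_\ep(\re^0)\px\ue^0), 0)$ is controlled uniformly in $\ep$ by hypothesis~\eqref{hyp:vel}. Hence $(\px \ue)_+(t,x) \leq A$, and integrating over $\T$ in the identity above yields $\|\px \ue(t,\cdot)\|_{L^1_x} \leq 2A|\T|$ for almost every $t \in (0,T)$. Taking the supremum in $t$ gives the claim with $C := 2A|\T|$, which is independent of $\ep$.

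Since the only ingredients are the already-proven Oleinik condition and the elementary periodicity identity $\int_\T \px \ue\,dx = 0$, there is no genuine obstacle here. The one point worth emphasizing is that the argument relies crucially on the periodic geometry of $\Omega = \T$: it converts the \emph{one-sided} control $\px \ue \leq 1/t$ into a \emph{two-sided} $L^1$ bound at no cost, whereas on an interval with boundary one would need to supply an additional boundary term or an independent estimate for the negative part $(\px\ue)_-$.
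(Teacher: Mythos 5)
Your proof is correct and follows essentially the same route as the paper: both exploit $\int_\T \partial_x u_\ep\,dx = 0$ on the torus to reduce the $L^1$ norm to twice the integral of the positive part $(\partial_x u_\ep)_+$, and then apply the Oleinik bound from Lemma~\ref{lem:Oleinik}. The only cosmetic difference is that the paper writes $|\partial_x u_\ep| = 2(\partial_x u_\ep)_+ - \partial_x u_\ep$ and keeps the time-dependent bound $A/(At+1)$ before taking the supremum, whereas you equate the integrals of the positive and negative parts and majorize directly by $A$; the content is identical.
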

\begin{proof}
Let us denote $D_\ep : = (\partial_x u_\ep)_+$.
We have, for any $t$
\begin{align*}
    & \int_\T |\partial_x u_\ep(t,x)| dx \\
    &  = \int_\T \Big( 2D_\ep(t,x) - \partial_x u_\ep(t,x)  \Big) dx \\
    &= \int_\T 2D_\ep(t,x) \,  dx\\
    & \overset{\text {Oleinik cond.}}{\leq} 2 |\T|\dfrac{A}{At+1} . 
\end{align*}
Taking the supremum  w.r.t. $t$ we conclude the proof.
\end{proof}

\section{Passage to the limit $\ep\to0$}\label{Sec:5}
The purpose of this section is to prove Theorem \ref{thm:main2}. We will show that when $\ep\to 0$ the sequence of solutions $\re,\ue$ gives rise to a sequence $\re,m_\ep,\pi_\ep$ converging to $\rho,m,\pi$ distributional solution of \eqref{eq:limit}.

\begin{proof}[Proof of Theorem \ref{thm:main2}]
Thanks to the uniform bounds from the previous section, there exist $\rho \in [0,1]$, $u$, and $\pi \geq 0$ such that
\begin{align*}
&\rho_\ep \rightharpoonup \rho \quad \text{weakly-* in} \quad L^\infty((0,T) \times \T), \\
&\ue\rightharpoonup u\quad \text{weakly-* in} \quad L^\infty ((0,T) \times \T), \\
&\pi_\ep(\rho_\ep) \rightarrow \pi \quad \text{weakly-* in} \quad L^\infty (0,T; H^1(\T)),
\end{align*}
up to selection of a subsequence.\\
We can immediately justify that
\eq{\label{strong_cond}
(1-\rho_\ep)\pi_\ep(\rho_\ep) \rightarrow 0 \quad \text{strongly in } ~ L^q((0,T)\times \T), ~q > 1, 
}
and that the approximate viscosity term converges to $0$ strongly, i.e. 
\[  \rho_\ep \varphi_\ep(\rho_\ep)  \rightarrow 0 \quad \text{strongly in } L^\infty((0,T)\times \T). \]

To pass to the limit in the nonlinear terms we first use the continuity and momentum equations of system \eqref{eq:AW-2} to deduce that for any $p<\infty$ we have
\eq{\label{est_der_time}
\|\pt\rho_\ep\|_{L^\infty_t W^{-1,p}_x}+\|\pt(\rho_\ep u_\ep)\|_{L^\infty_t W^{-1,2}_x}\leq C,
}
where to estimate the time derivative of momentum, we use that $\lambda_\ep(\rho_\ep)\approx \pi_\ep(\rho_\ep)^{1+\frac1\beta}$, along with uniform estimates \eqref{est_un_ep} and \eqref{est_un_pi}.\\
Combining the control of $\partial_t \rho_\ep$ with the control of $\partial_x \pi_\ep(\rho_\ep)$ we can apply the standard compensated compactness argument (see,  Lemma 5.1 from~\cite{lions1996}) to justify that
\[
(1-\rho_\ep)\pi_\ep(\rho_\ep) \rightharpoonup (1-\rho)\pi \quad \text{in}~ \mathcal{D}',
\]
and so, from \eqref{strong_cond}, we deduce that $(1-\rho)\pi = 0$ a.e. in $(0,T)\times\mathbb{T}$ with $1- \rho \geq 0$, $\pi \geq 0$.\\

Similarly, combining the control of gradient of velocity \eqref{est_uni_grad} with the uniform estimates for the time derivatives \eqref{est_der_time} we can  justify that
\eq{
\rho_\ep u_\ep\to \rho u\quad \mbox{and}\qquad
\rho_\ep u_\ep^2\to \rho u^2
}
in the sense of distributions.

Finally, we can use the equation for $\pt\pi_\ep(\rho_\ep)$ (which is of the form \eqref{eq:potential}) to deduce that
\eq{
\pt\px \pi_\ep(\rho_\ep)\in L^\infty_t W^{-1,1}_x,
}
so, repeating the previous argument we can justify that also
\eq{
 u_\ep \pi_\ep(\rho_\ep)\to  u \pi
}
in the sense of the distributions.

\medskip The last part is to verify the entropy conditions for the limiting system.
First, it is clear that the one-sided Lipschitz estimate holds on the limit velocity $u$:
\[
\partial_x u \leq \frac{1}{t} \qquad \text{in} ~ \mathcal{D}'.
\]
Next, we write that for fixed $\ep$, smooth function $S$:
\begin{align*}
\partial_t(\rho_\ep S(u_\ep)) + \partial_x(\rho_\ep u_\ep S(u_\ep)) -\partial_x \big(S'(u_\ep)\lambda_\ep(\rho_\ep) \partial_x u_\ep \big)\\
= S''(u_\ep) \lambda_\ep(\rho_\ep) (\partial_x u_\ep)^2,
\end{align*}
hence, for convex function $S$:
\[
\partial_t(\rho_\ep S(u_\ep)) + \partial_x(\rho_\ep u_\ep S(u_\ep)) -\partial_x \big(S'(u_\ep)\lambda_\ep(\rho_\ep) \partial_x u_\ep \big)
\leq 0.
\]
As previously, we pass to the limit in the sense of distribution in the first two nonlinear terms thanks to compensated compactness arguments.
Next, since $(\lambda_\ep(\rho_\ep) \partial_x u_\ep)_\ep$ is bounded in $L^1_{t,x}$ it converges to some $\Lambda \in \mathcal{M}((0,T) \times \T)$.
Recall that $(u_\ep)_\ep$ is bounded in $L^\infty_{t,x}$, so $(S'(u_\ep)\lambda_\ep(\rho_\ep) \partial_x u_\ep)_\ep$ is bounded in $L^1_{t,x}$ and converges to some $\Lambda_S \in \mathcal{M}((0,T) \times \T)$, where $|\Lambda_S| \leq \mathrm{Lip}_S |\Lambda|$. Finally, we have proven that:
\[
\partial_t(\rho S(u)) + \partial_x(\rho u S(u)) -\partial_x \Lambda_S
\leq 0.
\]

\bigskip

The proof of the Theorem \ref{thm:main2} is therefore complete. 
\end{proof}

\bigskip

\newpage

\section{Appendix}\label{appendix}
In order to proof the Proposition \ref{prop-reg-m3}, we first state  the following lemma.
\begin{lem}\label{lem-reg-m2}
Let $ \ep>0$ be fixed and $(\rho_\ep,u_\ep)$ be a regular solution of system~\eqref{eq:AW-2} with
\[ E_{2,\ep} = E_{1,\ep} + \|\partial^2_x u_{0,\ep}\|_{L^2_x}+ \|\partial^2_x \rho_{0,\ep}\|_{L^2_x}. \] Then we have

\begin{align}
& \|\partial^2_x \rho_\ep\|_{L^\infty_t L^2_x} + \|\partial_x V_\ep \|_{L^\infty_t L^2_x} + \|\partial^2_x V_\ep\|_{L^2_t L^2_x} + \|\partial^2_x u_\ep \|_{L^\infty_t L^2_x} + \|\partial^3_x u_\ep\|_{L^2_t L^2_x} \nonumber \\
& \leq C(\ep, E_{2,\ep}, \lrho_\ep, \urho_\ep, T).
\end{align}

\end{lem}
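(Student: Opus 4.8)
The plan is to run a \emph{combined} higher-order energy estimate for the pair $(\partial_x^2 \rho_\ep,\ \partial_x V_\ep)$ and to close it by Grönwall's inequality, exactly in the spirit of \cite[Lemma~4.2]{constantin2020}. At this stage we already control, uniformly in time on $[0,T]$: the mass, $\|\sqrt{\re}\ue\|_{L^\infty_t L^2_x}$ and $\|\sqrt{\lambda_\ep(\re)}\px\ue\|_{L^2_tL^2_x}$ (Lemma~\ref{lem:energy}), the BD bound~\eqref{eq:BD-entropy}, the two-sided density bounds $\lrho_\ep \le \re \le \urho_\ep$ (Lemmas~\ref{lem:up_rho}--\ref{lem:low_rho}), the bound $\|\px\re\|_{L^\infty_t L^2_x} \le \ep^{-1}C(\urho_\ep,\alpha,T,E_{0,\ep},E_{1,\ep})$, and $\|V_\ep\|_{L^\infty_t L^2_x} + \ep^{1/2}\|\px V_\ep\|_{L^2_t L^2_x} \le C$ from Lemma~\ref{lem:V-LiL2}. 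Since $\lambda_\ep(\re)/\re \ge \ep$ and $\re$ stays away from $0$ and $1$ on $[0,T]$, all the quantities $\lambda_\ep(\re)$, $\lambda_\ep'(\re)$, $\lambda_\ep''(\re)$, $1/\lambda_\ep(\re)$ and their first and second $\rho$-derivatives are bounded by constants depending only on $\ep,\lrho_\ep,\urho_\ep$; we use this freely, tracking the dependence only qualitatively.

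\textbf{The density block.} Take $m=2$ in~\eqref{rho-m-2}. The term $\|\px^2\ue\|_{L^2_x}\|\px^2\re\|_{L^2_x}^2$ is of Grönwall type once $\|\px^2\ue\|_{L^2_x}$ is controlled, while $\|\re\|_{L^\infty}\|\px^2\re\|_{L^2_x}\|\px^3\ue\|_{L^2_x}$ carries the genuinely highest $u$-derivative. To handle both, differentiate the identity $\px\ue = V_\ep/\lambda_\ep(\re)$: this expresses $\px^2\ue$ through $\px V_\ep$, $V_\ep$, $\px\re$, and $\px^3\ue$ through $\px^2 V_\ep$, $\px V_\ep$, $V_\ep$, $\px\re$, $\px^2\re$, with coefficients bounded as above. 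After a Gagliardo--Nirenberg interpolation ($\|\px V_\ep\|_{L^\infty_x} \le C\|\px V_\ep\|_{L^2_x}^{1/2}\|\px^2 V_\ep\|_{L^2_x}^{1/2} + C\|\px V_\ep\|_{L^2_x}$, and likewise for $\px\re$) and Young's inequality, the right-hand side of~\eqref{rho-m-2} is bounded by $\delta\ep\|\px^2 V_\ep\|_{L^2_x}^2$ plus a polynomial in $\|\px^2\re\|_{L^2_x}$, $\|\px V_\ep\|_{L^2_x}$ with time-integrable coefficients, the small parameter $\delta$ left free for absorption below.

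\textbf{The $V_\ep$ block.} Take $l=1$ in~\eqref{V-m-1}. The diffusion term yields $-\int \frac{\lambda_\ep(\re)}{\re}(\px^2 V_\ep)^2 \le -\ep\|\px^2 V_\ep\|_{L^2_x}^2$ plus a commutator involving $\px(\lambda_\ep(\re)/\re)$, estimated by $\|\px\re\|_{L^\infty_t L^2_x}$, interpolation and~\eqref{embed-1}, and absorbed into $\tfrac{\ep}{4}\|\px^2 V_\ep\|_{L^2_x}^2$. The transport term $-\int\px\big((\ue + \tfrac{\lambda_\ep(\re)}{\re^2}\px\re)\px V_\ep\big)\px V_\ep$ is controlled by Kato--Ponce/commutator bounds together with $\|\ue\|_{H^1_x}$, $\|\px\re\|_{L^2_x}$ and the embedding $\|\px V_\ep\|_{L^\infty_x}\le C\|\px^2 V_\ep\|_{L^2_x}$, again giving a fraction of $\ep\|\px^2 V_\ep\|^2$ plus lower-order terms. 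The quadratic source $-\int\px\big(\tfrac{\lambda_\ep'\re+\lambda_\ep}{\lambda_\ep^2}V_\ep^2\big)\px V_\ep$ is treated the same way, its worst piece being $\int(\cdots)V_\ep\px V_\ep\,\px V_\ep$ handled via $\|V_\ep\|_{L^\infty_x}\le C\|\px V_\ep\|_{L^2_x}^{1/2}\|V_\ep\|_{L^2_x}^{1/2}+C\|V_\ep\|_{L^2_x}$ and Young. Adding the density and $V_\ep$ blocks and choosing $\delta$ small, we reach
\begin{equation*}
\frac{d}{dt}\Big(\|\px^2\re\|_{L^2_x}^2 + \|\px V_\ep\|_{L^2_x}^2\Big) + \frac{\ep}{2}\|\px^2 V_\ep\|_{L^2_x}^2 \le C\big(1 + \|\px^2\re\|_{L^2_x}^2 + \|\px V_\ep\|_{L^2_x}^2\big)\big(1+g(t)\big),
\end{equation*}
where $g\in L^1(0,T)$ is built from the already-known quantities ($\|\sqrt{\re}\ue\|_{L^2_x}^2$, $\|\px V_\ep\|_{L^2_x}^2$, $\|R\|_{L^2_x}^2$, $\ldots$) and $C=C(\ep,\lrho_\ep,\urho_\ep,T)$. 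Grönwall then gives the $L^\infty_t L^2_x$ bounds on $\px^2\re$ and $\px V_\ep$, hence on $\px^2 V_\ep$ in $L^2_tL^2_x$ after integration; the bounds on $\px^2\ue$ in $L^\infty_t L^2_x$ and $\px^3\ue$ in $L^2_tL^2_x$ follow from Lemma~\ref{lem:reg-pxu}$(iii)$ combined with the new $H^2$ bound on $\re$.

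\textbf{Main obstacle.} The delicate point is the bookkeeping of the top-order contributions: in~\eqref{rho-m-2} the factor $\|\px^3\ue\|_{L^2_x}$ and in~\eqref{V-m-1} the factor $\|\px^2 V_\ep\|_{L^2_x}$ both sit above the regularity one wants to control, so they must be re-expressed through $V_\ep$ and its derivatives and absorbed into the single good term $\ep\|\px^2 V_\ep\|^2$ — which forces the $\re$- and $V_\ep$-estimates to be run \emph{simultaneously}. A second, purely technical, point is keeping all constants finite: this is precisely where the $\ep$-dependent lower bound $\re\ge\lrho_\ep>0$ of Lemma~\ref{lem:low_rho} (hence the artificial term $\vpe$) and the upper bound $\re\le\urho_\ep<1$ of Lemma~\ref{lem:up_rho} are indispensable, since $\lambda_\ep''$, $1/\lambda_\ep$ and $p_\ep$ degenerate at both endpoints. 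Together with the local-in-time existence recalled at the start of Section~\ref{Sec:3}, these a~priori bounds give the global solution and, jointly with Proposition~\ref{prop-reg-m3}, complete the proof of Theorem~\ref{thm:main1}.
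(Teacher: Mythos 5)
Your proposal is correct and follows essentially the same route as the paper's proof: a combined energy estimate for the pair $(\partial_x^2\rho_\ep,\ \partial_x V_\ep)$ via $m=2$ in \eqref{rho-m-2} and $l=1$ in \eqref{V-m-1}, re-expression of $\partial_x^3 u_\ep$ through $V_\ep$ and its derivatives, absorption of top-order contributions into the good diffusion term $\ep\|\partial_x^2 V_\ep\|_{L^2_x}^2$, and closure by Gr\"onwall, with the $\partial_x^2 u_\ep$ and $\partial_x^3 u_\ep$ bounds recovered afterwards from $V_\ep=\lambda_\ep(\rho_\ep)\partial_x u_\ep$. One small technical note: after a clean integration by parts the diffusion term collapses exactly to $-\int\tfrac{\lambda_\ep(\rho_\ep)}{\rho_\ep}|\partial_x^2 V_\ep|^2$, so there is no residual commutator in $\partial_x(\lambda_\ep(\rho_\ep)/\rho_\ep)$ to absorb, and the $L^\infty_tL^2_x$ bound on $\partial_x^2 u_\ep$ needs a direct estimate in $L^\infty_t$ norms rather than the $L^2_t$-version of Lemma~\ref{lem:reg-pxu}$(iii)$.
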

\begin{proof}
Here, we recall that our estimate of $ \px \re $ in $ L^\infty L^2 $ :
\begin{align*}
	\Vert \px \re \Vert_{L^\infty_t L^2_x} =\left(\alpha-\frac{1}{2}\right)^{-1} \frac{1}{\ep} \left\Vert \re ^{\frac{3}{2}-\alpha} \px(\ep \re^{\alpha- \frac{1}{2}}) \right\Vert_{L^\infty_t L^2_x} \leq \frac{1}{\ep}C(\bar{\rho}_\ep,\alpha, T, E_0, E_1). 
\end{align*}
We consider the case that corresponds to $ m=2 $ in \eqref{rho-m-2}. Therefore, we have 
\begin{align*}
	\frac{1}{2}\frac{d}{dt} \Vert \partial_x^2 \rho_\ep \Vert_{L^2_x}^2 & \leq C\left(    \Vert \px^2 \ue \Vert_{L^2_x}  \Vert \px^2 \re \Vert_{L^2_x}^2 +  \Vert \px^2 \re \Vert_{L^2_x}  \Vert \px^{3} \ue \Vert_{L^2_x}  \right).
\end{align*}
On the other hand, using integration by parts for $ l=1 $ in \eqref{V-m-1} gives us
\begin{align*}
	 &\frac{1}{2}\frac{d}{dt} \int \vert \partial_x V_\ep \vert^2 + \int   \dfrac{\lambda_\ep(\rho_\ep)}{\rho_\ep} \vert  \px^2 V_\ep \vert^2\\
	 & = \int   \left( \ue+  \dfrac{\lambda_\ep(\rho_\ep)}{\rho_\ep^2}\partial_x \rho_\ep\right) \partial_x V_\ep  \px^2 V_\ep  + \int   \dfrac{\big( \lambda'_\ep(\rho_\ep) \rho_\ep + \lambda_\ep(\rho_\ep)\big)}{\big(\lambda_\ep(\rho_\ep)\big)^2} V_\ep^2  \px^2 V_\ep.
\end{align*}
From the observation \[ \dfrac{\lambda_\ep(\rho_\ep)}{\rho_\ep} 
= \rho_\ep p'_\ep(\rho_\ep) + \ep \rho^{\alpha-1} 
\geq \ep \]
we deduce that
\begin{align*}
	&\frac{1}{2}\frac{d}{dt} \int \vert \partial_x V_\ep \vert^2 + \ep \int  \vert  \px^2 V_\ep \vert^2\\
	&\leq  \int    \ue \partial_x V_\ep  \px^2 V_\ep  +\int   \dfrac{\lambda_\ep(\rho_\ep)}{\rho_\ep^2}\partial_x \rho_\ep \partial_x V_\ep  \px^2 V_\ep  + \int   \dfrac{\big( \lambda'_\ep(\rho_\ep) \rho_\ep + \lambda_\ep(\rho_\ep)\big)}{\big(\lambda_\ep(\rho_\ep)\big)^2} V_\ep^2  \px^2 V_\ep
	\;\; := \sum_{i=1}^{3}J_i
\end{align*}
\par
\emph{Control for $ J_1 $:} 
We proceed similarly as in the case of $ I_2 $ of Lemma~\ref{lem:V-LiL2}. We have 
\begin{align*}
	\vert J_1 \vert \leq \Vert \ue \Vert_{L^\infty_x} \Vert \px V_\ep \Vert_{L^2_x} \Vert \px^2 V_\ep \Vert_{L^2_x} \leq \dfrac{\ep}{16}  \Vert \px^2 V_\ep \Vert_{L^2_x}^2 + \frac{4}{\epsilon} \Vert \ue \Vert_{L^\infty_x}^2 \Vert \px V_\ep \Vert_{L^2_x}^2.
\end{align*}
We recall
\begin{align*}
	\|u_\ep\|_{H^1_x}^2	\leq \lrho_\ep^{-1/2} \|\sqrt{\rho_\ep} u_\ep\|_{L^2_x}^2 + \ep^{-2} \lrho_\ep^{-2\alpha} \|V_\ep\|_{L^2_x}^2
\end{align*}
to conclude 
\begin{align}\label{J1}
	\vert J_1 \vert \leq \dfrac{\ep}{16}  \Vert \px^2 V_\ep \Vert_{L^2_x}^2 + 4\left(\lrho_\ep^{-1/2} \ep^{-1} \|\sqrt{\rho_\ep} u_\ep\|_{L^2_x}^2 + \ep^{-3} \lrho_\ep^{-2\alpha} \|V_\ep\|_{L^2_x}^2 \right)  \Vert \px V_\ep \Vert_{L^2_x}^2.
\end{align}
\par
\emph{Control for $ J_2 $:} For this term, we observe
\begin{align*}
	\vert J_2 \vert \leq C(\bar{\rho}_\ep, \lrho_\ep)\Vert \px \re \Vert_{L^\infty_x} \Vert \px V_\ep \Vert_{L^2_x} \Vert \px^2 V_\ep \Vert_{L^2_x} \leq \dfrac{\ep}{16}  \Vert \px^2 V_\ep \Vert_{L^2_x}^2 + C(\ep,\bar{\rho}_\ep, \lrho_\ep) \Vert \px \re \Vert_{L^\infty_x}^2 \Vert \px V_\ep \Vert_{L^2_x}^2.
\end{align*}
Moreover, we have 
\begin{align}\label{J2}
	\vert J_2 \vert \leq \dfrac{\ep}{16}  \Vert \px^2 V_\ep \Vert_{L^2_x}^2 + C(\ep,\bar{\rho}_\ep, \lrho_\ep) \Vert \px^2 \re \Vert_{L^2_x}^2 \Vert \px V_\ep \Vert_{L^2_x}^2.
\end{align}
\par
\emph{Control for $ J_3 $:} The bound of $ \left\|\dfrac{\big( \lambda'_\ep(\rho_\ep) \rho_\ep  + \lambda_\ep(\rho_\ep)\big)}{\big(\lambda_\ep(\rho_\ep)\big)^2}\right\|_{L^\infty_x} $ implies 
\begin{align*}
	\vert J_3 \vert \leq \dfrac{\ep}{16}  \Vert \px^2 V_\ep \Vert_{L^2_x}^2 + C(\ep,\bar{\rho}_\ep, \lrho_\ep) \int \vert V_\ep\vert ^4 
\end{align*}
Moreover, using Nash inequality we obtain
\begin{align*}
	\Vert V_\ep \Vert_{L^4_x}^4 \leq C\left(	\Vert V_\ep \Vert_{L^2_x}^3 	\Vert \px V_\ep \Vert_{L^2_x} + 	\Vert V_\ep \Vert_{L^2_x}^4 \right) .
\end{align*}
Thus we deduce
 \begin{align}\label{J3}
	\vert J_3 \vert \leq \dfrac{\ep}{16}  \Vert \px^2 V_\ep \Vert_{L^2_x}^2 + C(\ep,\bar{\rho}_\ep, \lrho_\ep) \left(	\Vert V_\ep \Vert_{L^2_x}^3 	\Vert \px V_\ep \Vert_{L^2_x} + 	\Vert V_\ep \Vert_{L^2_x}^4 \right).
\end{align}
Therefore, combining \eqref{J1}, \eqref{J2} and \eqref{J3}, it yields
\begin{align}\label{V2}
	\begin{split}
	&\frac{1}{2}\frac{d}{dt} \int \vert \partial_x V_\ep \vert^2 + \frac{13}{16}\ep \int  \vert  \px^2 V_\ep \vert^2\\
	&\leq 4\left(\lrho_\ep^{-1/2} \ep^{-1} \|\sqrt{\rho_\ep} u_\ep\|_{L^2_x}^2 + \ep^{-3} \lrho_\ep^{-2\alpha} \|V_\ep\|_{L^2_x}^2 \right)  \Vert \px V_\ep \Vert_{L^2_x}^2 \\
	&+ C(\ep,\bar{\rho}_\ep, \lrho_\ep) \Vert \px^2 \re \Vert_{L^2_x}^2 \Vert \px V_\ep \Vert_{L^2_x}^2+ C(\ep,\bar{\rho}_\ep, \lrho_\ep) \left(	\Vert V_\ep \Vert_{L^2_x}^3 	\Vert \px V_\ep \Vert_{L^2_x} + 	\Vert V_\ep \Vert_{L^2_x}^4 \right).
\end{split}
\end{align}
Now we want to derive an expresiion of $ \px^3 \ue $ in terms of $ V_\ep $ and its derivatives. 
Clearly, a direct calculation gives us 
\begin{align*}
	\px^2 V_\ep = 2 \px (\lambda_\ep (\re) ) \px^2 \ue +  \px^2 (\lambda_\ep (\re) ) \px \ue +  \lambda_\ep (\re)  \px^3 \ue .
\end{align*}
Now using relation between $ \px V_\ep $ and $ \px \ue $, we have 
\begin{align*}
	\px^3 \ue &=\dfrac{1}{\lambda_\ep (\re) } \px^2 V_\ep - \dfrac{2\px \lambda_\ep (\re)  }{\lambda_\ep (\re) } \px V_\ep + 2 \left( \dfrac{ \px \lambda_\ep (\re) }{\lambda_\ep (\re) } \right)^2 V_\ep -\left( \dfrac{\lambda^{\prime}_\ep (\re) \vert \px \re \vert^2+ \lambda^{\prime \prime}_\ep (\re) \px^2 \re }{\lambda_\ep (\re)^2 }\right) V_\ep\\
	&:= \sum_{i=1}^{4} B_i.
\end{align*}
Considering $ \tilde{A}=\dfrac{\px \lambda_\ep (\re)}{\lambda_\ep (\re)} $, we observe that \begin{align*}
	\Vert \tilde{A} \Vert_{L^\infty_x}  \leq C(\bar{\rho}_\ep, \lrho_\ep) \Vert \px \re \Vert_{L^\infty_x} \leq C(\bar{\rho}_\ep, \lrho_\ep)  \Vert \px^2 \re \Vert_{L^2_x} .
\end{align*} 
Using the above estimate, we obtain the following bounds: 
\begin{align*}
&	\Vert B_1 \Vert_{L^2_x} \leq \left\Vert \frac{1}{\lambda_\ep (\re) } \right\Vert_{L^\infty_x} 	\Vert \px^2 V_\ep \Vert_{L^2_x} \leq C(\bar{\rho}_\ep, \lrho_\ep) \Vert \px^2 V_\ep \Vert_{L^2_x};\\
&	\Vert B_2 \Vert_{L^2_x} \leq C \Vert \tilde{A} \Vert_{L^\infty_x} \Vert \px V_\ep \Vert_{L^2_x} \leq C(\bar{\rho}_\ep, \lrho_\ep) \Vert \px^2 \re \Vert_{L^2_x}  \Vert \px V_\ep \Vert_{L^2_x} ;\\
&	\Vert B_3 \Vert_{L^2_x} \leq C \Vert \tilde{A} \Vert_{L^\infty_x} \Vert \tilde{A} \Vert_{L^2_x} \Vert V_\ep \Vert_{L^\infty_x} \leq C(\bar{\rho}_\ep, \lrho_\ep) \Vert \px \re \Vert_{L^\infty_x}\Vert \px \re \Vert_{L^2_x}\Vert V_\ep \Vert_{H^1_x} \\
&\quad \quad \quad \; \leq C(\bar{\rho}_\ep, \lrho_\ep) \Vert \px^2 \re \Vert_{L^2_x}  \Vert \px \re \Vert_{L^2_x}\Vert V_\ep \Vert_{H^1_x}  ;\\
&	\Vert B_4 \Vert_{L^2_x} \leq  C(\bar{\rho}_\ep, \lrho_\ep) \Vert \px \re \Vert_{L^\infty_x}\Vert \px \re \Vert_{L^2_x}\Vert V_\ep \Vert_{H^1_x} +  C(\bar{\rho}_\ep, \lrho_\ep) \Vert \px^2 \re \Vert_{L^2_x}\Vert V_\ep \Vert_{H^1_x} \\
&\quad \quad \quad \; \leq C(\bar{\rho}_\ep, \lrho_\ep) \left(\Vert \px^2 \re \Vert_{L^2_x}  \Vert \px \re \Vert_{L^2_x}\Vert V_\ep \Vert_{H^1_x} +  \Vert \px^2 \re \Vert_{L^2_x}\Vert V_\ep \Vert_{H^1_x}  \right) ;
\end{align*}

Therefore, we have 
\begin{align}\label{u3}
	\begin{split}
	\Vert \px^3 \ue \Vert_{L^2_x} \leq &C(\bar{\rho}_\ep, \lrho_\ep) \left(\Vert \px^2 V_\ep \Vert_{L^2_x} +\left( \Vert \px \re \Vert_{L^2_x}\Vert V_\ep \Vert_{H^1_x}  + \Vert V_\ep \Vert_{H^1_x}   \right)\Vert \px^2 \re \Vert_{L^2_x} \right)\\
\end{split}
\end{align}
We recall \begin{align*}
	\frac{1}{2}\frac{d}{dt} \Vert \partial_x^2 \rho_\ep \Vert_{L^2_x}^2 & \leq C\left(    \Vert \px^2 \ue \Vert_{L^2_x}  \Vert \px^2 \re \Vert_{L^2_x}^2 +  \Vert \px^2 \re \Vert_{L^2_x}  \Vert \px^{3} \ue \Vert_{L^2_x}  \right)
\end{align*}
and substitute \eqref{u3} in the estimate for $ \px^2 \re $, we get
\begin{align*}
	\frac{1}{2}\frac{d}{dt} \Vert \partial_x^2 \rho_\ep \Vert_{L^2_x}^2 & \leq C(\bar{\rho}_\ep, \lrho_\ep)\Vert \px^2 V_\ep \Vert_{L^2_x}\Vert \px^2 \re \Vert_{L^2_x}\\
	&+ C(\bar{\rho}_\ep, \lrho_\ep) \left(  \Vert \px^2 \ue \Vert_{L^2_x} +\Vert V_\ep \Vert_{H^1_x} +\Vert \px \re \Vert_{L^2_x}\Vert V_\ep \Vert_{H^1_x}   \right) \Vert \px^2 \re \Vert_{L^2_x}^2 
\end{align*}
Now we add the above estimate with \eqref{V2} to obtain
\begin{align*}
	&\frac{1}{2}\frac{d}{dt} \Vert \partial_x^2 \rho_\ep \Vert_{L^2_x}^2+ \frac{1}{2}\frac{d}{dt}  \Vert \partial_x V_\ep \Vert_{L^2_x}^2 + \frac{13}{16}\ep  \Vert  \px^2 V_\ep \Vert_{L^2_x}^2\\
	& \leq C(\bar{\rho}_\ep, \lrho_\ep)\Vert \px^2 V_\ep \Vert_{L^2_x}\Vert \px^2 \re \Vert_{L^2_x}\\
	&+ C(\bar{\rho}_\ep, \lrho_\ep) \left(  \Vert \px^2 \ue \Vert_{L^2_x} +\Vert V_\ep \Vert_{H^1_x} +\Vert \px \re \Vert_{L^2_x}\Vert V_\ep \Vert_{H^1_x}  +\Vert \px V_\ep \Vert_{L^2_x}^2 \right) \Vert \px^2 \re \Vert_{L^2_x}^2   \\
	& +4\left(\lrho_\ep^{-1/2} \ep^{-1} \|\sqrt{\rho_\ep} u_\ep\|_{L^2_x}^2 + \ep^{-3} \lrho_\ep^{-2\alpha} \|V_\ep\|_{L^2_x}^2 \right)  \Vert \px V_\ep \Vert_{L^2_x}^2 \\
	&+  C(\ep,\bar{\rho}_\ep, \lrho_\ep) \left(	\Vert V_\ep \Vert_{L^2_x}^3 	\Vert \px V_\ep \Vert_{L^2_x} + 	\Vert V_\ep \Vert_{L^2_x}^4 \right).
\end{align*}

At first we use Young's inequality to deduce \[  \Vert \px^2 V_\ep \Vert_{L^2_x}\Vert \px^2 \re \Vert_{L^2_x}\leq \frac{\epsilon}{16} \Vert \px^2 V_\ep \Vert_{L^2_x}^2+\frac{4}{\epsilon}\Vert \px^2 \re \Vert_{L^2_x}^2.\]

Similarly, adjusting a few more terms we have the following inequality:
\begin{align}\label{1final}
	&\frac{1}{2}\frac{d}{dt} \Vert \partial_x^2 \rho_\ep \Vert_{L^2_x}^2+ \frac{1}{2}\frac{d}{dt}  \Vert \partial_x V_\ep \Vert_{L^2_x}^2 + \frac{3}{4}\ep  \Vert  \px^2 V_\ep \Vert_{L^2_x}^2 \leq F_1(t) \Vert \px^2 \re \Vert_{L^2_x}^2 + F_2(t)\Vert \px V_\ep \Vert_{L^2_x}^2+ G(t)
\end{align}
where 
\begin{align*}
&	F_1(t)= C(\ep, \bar{\rho}_\ep, \lrho_\ep) \left(  \Vert \px^2 \ue \Vert_{L^2_x}  +\Vert V_\ep \Vert_{H^1_x} +  \Vert \px \re \Vert_{L^2_x}\Vert V_\ep \Vert_{H^1_x}+\Vert \px V_\ep \Vert_{L^2_x}^2 +1 \right) \\
&	F_2(t)= 4\left(\lrho_\ep^{-1/2} \ep^{-1} \|\sqrt{\rho_\ep} u_\ep\|_{L^2_x}^2 + \ep^{-3} \lrho_\ep^{-2\alpha} \|V_\ep\|_{L^2_x}^2 \right) 
\end{align*}
and 
\begin{align*}
	G(t)= C(\ep,\bar{\rho}_\ep, \lrho_\ep) \left(	\Vert V_\ep \Vert_{L^2_x}^3 	\Vert \px V_\ep \Vert_{L^2_x} + 	\Vert V_\ep \Vert_{L^2_x}^4 \right).
\end{align*}
From our earlier estimates  time interval $ (0,T) $, we have 
\begin{align*}
	&\Vert F_1 \Vert_{L^1_t} \leq C(\ep, \bar{\rho}_\ep, \lrho_\ep,T)  \left(  \Vert \px^2 \ue \Vert_{L^2_t L^2_x}  +\Vert V_\ep \Vert_{L^2_t H^1_x} +  \Vert \px \re \Vert_{L^\infty_x L^2_x}\Vert V_\ep \Vert_{L^2_tH^1_x}+\Vert \px V_\ep \Vert_{L_t^2 L^2_x}^2 +T \right);\\
	&\Vert F_2 \Vert_{L^1_t} \leq C(\ep, \bar{\rho}_\ep, \lrho_\ep,T)  \left(\lrho_\ep^{-1/2} \ep^{-1} \|\sqrt{\rho_\ep} u_\ep\|_{L^\infty_t L^2_x}^2 + \ep^{-3} \lrho_\ep^{-2\alpha} \|V_\ep\|_{L^\infty_t L^2_x}^2 \right);\\
	& \Vert G \Vert_{L^1_t} \leq  C(\ep,\bar{\rho}_\ep, \lrho_\ep,T) \left(	\Vert V_\ep \Vert_{L^\infty_t L^2_x}^3 	\Vert \px V_\ep \Vert_{L^2_t L^2_x} + 	\Vert V_\ep \Vert_{L^\infty_t L^2_x}^4 \right).
\end{align*}
At this point, we observe that for, it holds
\begin{align*}
	\Vert F_1 \Vert_{L^1_t}+	\Vert F_2 \Vert_{L^1_t}+	\Vert G \Vert_{L^1_t} \leq C(\ep,\bar{\rho}_\ep, \lrho_\ep,E_1,T). 
\end{align*}
Integrating the equation \eqref{1final} with respect to time along with the additional hypothesis 
\[  \Vert \px^2 \rho_{0,\ep} \Vert_{L^2_x} +\Vert \partial_x V_{0,\ep} \Vert_{L^2_x} < \infty ,\] 
and Gr\"onwall's inequality, we conclude
\begin{align*}
	&\Vert \px^2 \re \Vert_{L^\infty L^2}+  \| \px V_\ep\|_{L^\infty L^2}^2 + \dfrac{\ep}{2} \|\partial_x^2 V_\ep\|_{L^2 L^2}^2 \\
	& \leq C(\ep, {E_{2}}, \lrho_\ep, T). 
\end{align*}
\emph{$ L^\infty L^2 $ estimate for $ \px^2 \ue $:}
\begin{align*}
	\|\partial^2_x u_\ep\|_{L^\infty L^2}
	& \leq \left\| \dfrac{1}{\lambda_\ep(\rho_\ep)}\right\|_{L^\infty_{t,x}} \Bigg( \|\partial_x V_\ep\|_{L^\infty L^2}
	+ \Vert \lambda_\ep^\prime(\re)\Vert_{L^\infty_{t,x}} (\|\partial_x^2 \re\| _{L^\infty L^2}+ \|\px \re \|_{L^\infty L^2}) \|\partial_x u_\ep\|_{L^\infty L^2} \Bigg)\\
	& \leq C(\ep, {E_{2}}, \lrho_\ep, T). 
\end{align*}
\emph{$ L^2 L^2 $ estimate for $ \px^3 \ue $:}
From the expression
\begin{align*}
	\px^3 \ue &=\dfrac{1}{\lambda_\ep (\re) } \px^2 V_\ep - \dfrac{2\px \lambda_\ep (\re)  }{\lambda_\ep (\re) } \px V_\ep + 2 \left( \dfrac{ \px \lambda_\ep (\re) }{\lambda_\ep (\re) } \right)^2 V_\ep -\left( \dfrac{\lambda^{\prime}_\ep (\re) \vert \px \re \vert^2+ \lambda^{\prime \prime}_\ep (\re) \px^2 \re }{\lambda_\ep (\re)^2 }\right) V_\ep 
\end{align*}
and 
estimate \eqref{u3}, we get
\begin{align*}
		\Vert \px^3 \ue \Vert_{L^2_t L^2_x} \leq &C(\bar{\rho}_\ep, \lrho_\ep) \bigg(\Vert \px^2 V_\ep \Vert_{L^2_t  L^2_x} +\left( \Vert \px \re \Vert_{L^\infty_t L^2_x}^\frac{1}{2}\Vert V_\ep \Vert_{L^\infty_t  H^1_x}    \right)^{\frac{4}{3}} + \Vert \px^2 \re \Vert_{L^2_t L^2_x}^2 + \Vert V_\ep \Vert_{H^1_x} \Vert \px^2 \re \Vert_{L^2_x}  \bigg)\\
	&+C(\bar{\rho}_\ep, \lrho_\ep) (\Vert \px \re \Vert_{L^\infty_t  L^2_x} + \Vert \px \re \Vert_{L^\infty_t L^2_x}^2)\Vert V_\ep \Vert_{L^\infty_t H^1_x} .
\end{align*}
Therefore, we prove our desired Lemma.
\end{proof}

\medskip

\begin{proof}[Proof of Proposition \ref{prop-reg-m3}]
In order to prove the proposition, at first we notice that this corresponds to the case $m=3$ and $l=2$ in \eqref{rho-m-1} and \eqref{V-m-1}, respectively.

For $ m=3 $ in \eqref{rho-m-2} we obtain
\begin{align}\label{rho-m-3}
	\begin{split}
		\frac{1}{2}\frac{d}{dt} \Vert \partial_x^3 \rho_\ep \Vert_{L^2_x}^2 & \leq C\left(    \Vert \px^3 \ue \Vert_{L^2_x}  \Vert \px^3 \re \Vert_{L^2_x}^2 +  \Vert \px^3 \re \Vert_{L^2_x}  \Vert \px^{4} \ue \Vert_{L^2_x}  \right)
	\end{split}
\end{align}
Similarly, $ l=2 $ gives us
\begin{align}\label{V-l-2}
	\begin{split}
		\frac{1}{2}\frac{d}{dt} \int \vert \partial_x^2 V_\ep \vert^2 =& - \int \px^2 \left( \left( \ue+  \dfrac{\lambda_\ep(\rho_\ep)}{\rho_\ep^2}\partial_x \rho_\ep\right) \partial_x V_\ep \right) \px^2 V_\ep \\&
		+\int \px^2 \left(  \dfrac{\lambda_\ep(\rho_\ep)}{\rho_\ep} \partial^2_x V_\ep \right) \px^2 V_\ep\\
		& - \int \px^2 \left( \dfrac{\big( \lambda'_\ep(\rho_\ep) \rho_\ep + \lambda_\ep(\rho_\ep)\big)}{\big(\lambda_\ep(\rho_\ep)\big)^2} V_\ep^2 \right) \px^2 V_\ep.
	\end{split}
\end{align}
Application of integration by parts for the terms in the right hand side of the above equation followed by an adjustment of terms leads us to get
\begin{align*}
		&\frac{1}{2}\frac{d}{dt} \int \vert \partial_x^2 V_\ep \vert^2 + \int   \dfrac{\lambda_\ep(\rho_\ep)}{\rho_\ep} \vert  \px^3 V_\ep \vert^2\\
		&= \int \ue \px^2 V_\ep \px^3 V_\ep + \int \px \ue \px V_\ep \px^3 V_\ep +\int \dfrac{\lambda_\ep(\rho_\ep)}{\rho_\ep^2} \px^2 \re \px V_\ep \px^3 V_\ep \\
		&+\int \left(\dfrac{\re \lambda_\ep^\prime (\re)-\lambda_\ep(\re)}{\lambda_\ep^2}\right)\vert \px \re \vert^2 \px V_\ep \px^3 V_\ep  + \int \px \left( \dfrac{\big( \lambda'_\ep(\rho_\ep) \rho_\ep + \lambda_\ep(\rho_\ep)\big)}{\big(\lambda_\ep(\rho_\ep)\big)^2} \right) V_\ep^2 \px^3 V_\ep\\
		&+2 \int \left( \dfrac{\big( \lambda'_\ep(\rho_\ep) \rho_\ep + \lambda_\ep(\rho_\ep)\big)}{\big(\lambda_\ep(\rho_\ep)\big)^2} \right) V_\ep \px V_\ep  \px^3 V_\ep:= \sum_{i=1}^{6} K_i.
\end{align*}

Recalling the inequality $\dfrac{\lambda_\ep(\rho_\ep)}{\rho_\ep} 
\geq \ep $, we conclude \[\int   \dfrac{\lambda_\ep(\rho_\ep)}{\rho_\ep} \vert  \px^3 V_\ep \vert^2 \geq \ep \int    \vert  \px^3 V_\ep \vert^2. \]
\par
\emph{Control for $ K_1 $ :} Here we have
\begin{align*}
	\vert K_1 \vert \leq \Vert \ue \Vert_{L^\infty_x} \Vert \px^2 V_\ep \Vert_{L^2_x} \Vert \px^3 V_\ep \Vert_{L^2_x} \leq \dfrac{\ep}{16}  \Vert \px^3 V_\ep \Vert_{L^2_x}^2 + \frac{4}{\epsilon} \Vert \ue \Vert_{L^\infty_x}^2 \Vert \px^2 V_\ep \Vert_{L^2_x}^2.
\end{align*}
Proceeding similarly as in \eqref{J1}, we obtain
\begin{align}\label{K1}
	\vert K_1 \vert \leq \dfrac{\ep}{16}  \Vert \px^3 V_\ep \Vert_{L^2_x}^2 + 4\left(\lrho_\ep^{-1/2} \ep^{-1} \|\sqrt{\rho_\ep} u_\ep\|_{L^2_x}^2 + \ep^{-3} \lrho_\ep^{-2\alpha} \|V_\ep\|_{L^2_x}^2 \right)  \Vert \px^2 V_\ep \Vert_{L^2_x}^2.
\end{align}
\par
\emph{Control for $ K_2 $ :} Also, for this term we use Young's inequality and the inequality \eqref{embed-1} to get
\begin{align*}
	\vert K_2 \vert \leq \Vert \px \ue \Vert_{L^2_x} \Vert \px V_\ep \Vert_{L^\infty_x} \Vert \px^3 V_\ep \Vert_{L^2_x} \leq \dfrac{\ep}{16}  \Vert \px^3 V_\ep \Vert_{L^2_x}^2 + \frac{8}{\epsilon} \Vert \px \ue \Vert_{L^2_x}^2 \Vert \px^2 V_\ep \Vert_{L^2_x}^2.
\end{align*}
Hence, we have 
\begin{align}\label{K2}
			\vert K_2 \vert  \leq \dfrac{\ep}{16}  \Vert \px^3 V_\ep \Vert_{L^2_x}^2 +C(\ep,\bar{\rho}_\ep, \lrho_\ep) \left( \Vert \px \ue \Vert_{L^2_x}^2 \Vert \px^2 V_\ep \Vert_{L^2_x}^2+ \Vert \px \ue \Vert_{L^2_x}^2 \Vert \px V_\ep \Vert_{L^2_x}^2\right) .
\end{align}
\par
\emph{Control for $ K_3$ :} We note that 
\begin{align}\label{K3}
	\vert K_3 \vert &\leq \left\Vert  \dfrac{\lambda_\ep(\rho_\ep)}{\rho_\ep^2}\right\Vert_{L^\infty_t}  \Vert \px^2 \re \Vert_{L^\infty_x} \Vert \px V_\ep \Vert_{L^2_x} \Vert \px^3 V_\ep \Vert_{L^2_x} \leq \dfrac{\ep}{16} \Vert \px^3 V_\ep \Vert_{L^2_x}^2 + C(\ep,\bar{\rho}_\ep, \lrho_\ep) \Vert \px V_\ep \Vert_{L^2_x}^2  \Vert \px^3 \re \Vert_{L^2_x}^2 .
\end{align}

\par
\emph{Control for $ K_4$ :} We start with the following estimate:
\begin{align*}
		\vert K_4 \vert \leq \left\Vert \dfrac{\re \lambda_\ep^\prime (\re)-\lambda_\ep(\re)}{\lambda_\ep^2}\right\Vert_{L^\infty_x}  \Vert \px \re \Vert_{L^\infty_x}^2  \Vert \px V_\ep \Vert_{L^2_x} \Vert \px^3 V_\ep \Vert_{L^2_x}. 
\end{align*}
Youngs inequality gives us
 \begin{align}\label{K4}
	\vert K_4 \vert \leq \dfrac{\ep}{16} \Vert \px^3 V_\ep \Vert_{L^2_x}^2 +C(\ep,\bar{\rho}_\ep, \lrho_\ep) \Vert \px V_\ep \Vert_{L^2_x}^2 \Vert \px^2 \re \Vert_{L^2_x}^4 .
\end{align}
\par
\emph{Control for $ K_5$ :}
A direct calculation gives us the following identity
\begin{align*}
	\px \left( \dfrac{\big( \lambda'_\ep(\rho_\ep) \rho_\ep + \lambda_\ep(\rho_\ep)\big)}{\big(\lambda_\ep(\rho_\ep)\big)^2} \right) &= \left[\frac{1}{\lambda_\ep(\re)^2} \left( \lambda_\ep^{\prime \prime}(\re) \re + 2\lambda_\ep^{\prime } (\re) \right)- \frac{\lambda_\ep^{ \prime}(\re) }{\lambda_\ep(\re)^3 } \left( \lambda'_\ep(\rho_\ep) \rho_\ep + \lambda_\ep(\rho_\ep)\right)\right] \px \re.
\end{align*}
As a consequence we have 
\begin{align*}
	\left\Vert \left[\frac{1}{\lambda_\ep(\re)^2} \left( \lambda_\ep^{\prime \prime}(\re) \re + 2\lambda_\ep^{\prime } (\re) \right)- \frac{\lambda_\ep^{ \prime}(\re) }{\lambda_\ep(\re)^3 } \left( \lambda'_\ep(\rho_\ep) \rho_\ep + \lambda_\ep(\rho_\ep)\right)\right] \right\Vert_{L^\infty_x} \leq C(\bar{\rho}_\ep, \lrho_\ep).
\end{align*}
hence, we obtain
\begin{align*}
	\vert K_5 \vert \leq C(\bar{\rho}_\ep, \lrho_\ep)  \Vert \px \re \Vert_{L^2_x}  \Vert  V_\ep \Vert_{L^\infty_x}^2 \Vert \px^3 V_\ep \Vert_{L^2_x}. 
\end{align*}
Using the Young inequality we deduce
\begin{align}\label{K5}
		\vert K_5 \vert \leq \dfrac{\ep}{16} \Vert \px^3 V_\ep \Vert_{L^2_x}^2 +  C(\ep,\bar{\rho}_\ep, \lrho_\ep) \Vert \px \re \Vert_{L^2_x}^2  \Vert  V_\ep \Vert_{H^1_x}^4.
\end{align}
\par
\emph{Control for $ K_6 $ :}
Here we observe that
\begin{align*}
	\vert K_6 \vert \leq \left\Vert \left( \dfrac{\big( \lambda'_\ep(\rho_\ep) \rho_\ep + \lambda_\ep(\rho_\ep)\big)}{\big(\lambda_\ep(\rho_\ep)\big)^2} \right)  \right\Vert_{L^\infty_x} \Vert \px V_\ep \Vert_{L^2_x}  \Vert   V_\ep \Vert_{L^\infty_x} \Vert \px^3 V_\ep \Vert_{L^2_x}. 
\end{align*}
This implies 
\begin{align}\label{K6}
		\vert K_6 \vert \leq \dfrac{\ep}{16} \Vert \px^3 V_\ep \Vert_{L^2_x}^2 +  C(\ep,\bar{\rho}_\ep, \lrho_\ep)  \Vert  V_\ep \Vert_{H^1_x}^4.
\end{align}
Therefore, adding inequalities \eqref{K1}-\eqref{K6}, we have 
\begin{align}\label{V-l-2a}
	\begin{split}
		&\frac{1}{2}\frac{d}{dt} \int \vert \partial_x^2 V_\ep \vert^2 + \frac{5}{8}\ep \int   \vert  \px^3 V_\ep \vert^2\\
		&\leq C(\ep,\bar{\rho}_\ep, \lrho_\ep) \Vert \px V_\ep \Vert_{L^2_x}^2 \Vert \px^3 \re \Vert_{L^2_x}^2 \\
		&+ C(\ep,\bar{\rho}_\ep, \lrho_\ep) \left(\left(\lrho_\ep^{-1/2} \ep^{-1} \|\sqrt{\rho_\ep} u_\ep\|_{L^2_x}^2 + \ep^{-3} \lrho_\ep^{-2\alpha} \|V_\ep\|_{L^2_x}^2 \right) +  \Vert \px \ue \Vert_{L^2_x}^2  \right) \Vert \px^2 V_\ep \Vert_{L^2_x}^2 \\
		&+ C(\ep,\bar{\rho}_\ep, \lrho_\ep) \left(\Vert \px V_\ep \Vert_{L^2_x}^2 \Vert \px^2 \re \Vert_{L^2_x}^4 + (1+\Vert \px \re \Vert_{L^2_x}^2 ) \Vert  V_\ep \Vert_{H^1_x}^4 \right)
	\end{split}
\end{align}
Next, we would like to estimate $ \Vert \px^{4} \ue \Vert_{L^2_x}  $. 
A direct computation leads to the following identity: 
\begin{align*}
	\px^4 \ue &= \frac{1}{\lambda_\ep (\re) }  \px^3 V_\ep - \left( \dfrac{ \lambda_\ep^{\prime} (\re)  }{\lambda_\ep (\re)^2} +  \dfrac{2\lambda_\ep^{\prime} (\re)  }{\lambda_\ep (\re) } \right) \px \re \px^2 V_\ep \\
	&-\left( \left(  \dfrac{2\lambda_\ep^{\prime} (\re)  }{\lambda_\ep (\re)} \right)^\prime -\left( \dfrac{2 \lambda_\ep^{\prime}(\re)^2  }{\lambda_\ep(\re)^2}-\dfrac{\lambda_\ep^{\prime} (\re)  }{\lambda_\ep (\re)^2}\right) \right) \vert \px \re \vert^2 \px V_\ep- \left(2\dfrac{\lambda_\ep^{\prime} (\re)  }{\lambda_\ep (\re)}+\dfrac{\lambda_\ep^{\prime} (\re)  }{\lambda_\ep (\re)^2} \right) \px^2 \re \px V_\ep\\
	&+ \left( 2 \left( \dfrac{2 \lambda_\ep^{\prime}(\re)^2  }{\lambda_\ep(\re)^2}-\dfrac{\lambda_\ep^{\prime} (\re)  }{\lambda_\ep (\re)^2}\right) - \left(\dfrac{ \lambda_\ep^\prime(\re)}{\lambda_\ep(\re)^2}\right)^\prime  \right) \px \re \px^2 \re V_\ep\\
	&+ \left( \dfrac{2 \lambda_\ep^{\prime}(\re)^2  }{\lambda_\ep(\re)^2}-\dfrac{\lambda_\ep^{\prime} (\re)  }{\lambda_\ep (\re)^2}\right)^\prime (\px \re)^3 V_\ep- \left(\dfrac{\lambda_\ep^{\prime} (\re)  }{\lambda_\ep (\re)^2} \right) \px^3 \re V_\ep
\end{align*}
We rewrite the abobe expression as
\begin{align*}
		\px^4 \ue &=  \mathcal{R}_1(\lambda_\ep (\re),\lambda_\ep^{\prime} (\re),\lambda_\ep^{\prime \prime} (\re))   \px^3 V_\ep +  \mathcal{R}_2(\lambda_\ep (\re),\lambda_\ep^{\prime} (\re),\lambda_\ep^{\prime \prime} (\re)) \px \re \px^2 V_\ep\\
		&+ \mathcal{R}_3(\lambda_\ep (\re),\lambda_\ep^{\prime} (\re),\lambda_\ep^{\prime \prime} (\re))\vert \px \re \vert^2 \px V_\ep+ \mathcal{R}_4(\lambda_\ep (\re),\lambda_\ep^{\prime} (\re),\lambda_\ep^{\prime \prime} (\re))\px^2 \re \px V_\ep\\
		&+ \mathcal{R}_5(\lambda_\ep (\re),\lambda_\ep^{\prime} (\re),\lambda_\ep^{\prime \prime} (\re))\px \re \px^2 \re V_\ep+ \mathcal{R}_6(\lambda_\ep (\re),\lambda_\ep^{\prime} (\re),\lambda_\ep^{\prime \prime} (\re))(\px \re)^3 V_\ep\\
		&+ \mathcal{R}_7(\lambda_\ep (\re),\lambda_\ep^{\prime} (\re),\lambda_\ep^{\prime \prime} (\re)) \px^3 \re V_\ep := \sum_{i=1}^{7} D_i ,
\end{align*}
where for each $ i=1,\cdots,7 $ we have 
\begin{align*}
	\Vert  \mathcal{R}_i(\lambda_\ep (\re),\lambda_\ep^{\prime} (\re),\lambda_\ep^{\prime \prime}(\re)) \Vert_{L^\infty_x} \leq C(\bar{\rho}_\ep, \lrho_\ep) .
\end{align*}
Therefore, we the following estimates:
\begin{align*}
	&	\Vert D_1 \Vert_{L^2_x}  \leq C(\bar{\rho}_\ep, \lrho_\ep) \Vert \px^3 V_\ep \Vert_{L^2_x};\\
	&	\Vert D_2 \Vert_{L^2_x}  \leq C(\bar{\rho}_\ep, \lrho_\ep) \Vert \px^2 V_\ep \Vert_{L^2_x}\Vert \px \re  \Vert_{L^\infty_x} \leq  C(\bar{\rho}_\ep, \lrho_\ep) \Vert \px^2 V_\ep \Vert_{L^2_x}\Vert \px^2 \re  \Vert_{L^2_x} ;\\
	&	\Vert D_3 \Vert_{L^2_x}  \leq C(\bar{\rho}_\ep, \lrho_\ep) \Vert \px V_\ep \Vert_{L^2_x}\Vert \px \re  \Vert_{L^\infty_x}^2 \leq   C(\bar{\rho}_\ep, \lrho_\ep) \Vert \px V_\ep \Vert_{L^2_x} \Vert \px^2 \re  \Vert_{L^2_x}^2;\\
	&	\Vert D_4 \Vert_{L^2_x}  \leq C(\bar{\rho}_\ep, \lrho_\ep) \Vert \px V_\ep \Vert_{L^2_x}\Vert \px^2 \re  \Vert_{L^\infty_x}\leq  C(\bar{\rho}_\ep, \lrho_\ep) \Vert \px V_\ep \Vert_{L^2_x} \Vert \px^3 \re  \Vert_{L^2_x};\\
	&	\Vert D_5 \Vert_{L^2_x}  \leq C(\bar{\rho}_\ep, \lrho_\ep) \Vert  V_\ep \Vert_{L^\infty_x}\Vert \px \re  \Vert_{L^\infty_x}\Vert \px^2 \re  \Vert_{L^2_x} \leq   C(\bar{\rho}_\ep, \lrho_\ep) \Vert  V_\ep \Vert_{H^1_x} \Vert \px^2 \re  \Vert_{L^2_x}^2;\\
	&	\Vert D_6 \Vert_{L^2_x}  \leq C(\bar{\rho}_\ep, \lrho_\ep) \Vert  V_\ep \Vert_{L^\infty_x}\Vert \px \re  \Vert_{L^\infty_x} \Vert \px \re  \Vert_{L^2_x}^2 \leq   C(\bar{\rho}_\ep, \lrho_\ep) \Vert  V_\ep \Vert_{H^1_x} \Vert \px \re  \Vert_{L^2_x}^3;\\
	&	\Vert D_7 \Vert_{L^2_x}  \leq C(\bar{\rho}_\ep, \lrho_\ep) \Vert  V_\ep \Vert_{L^\infty_x}\Vert \px^3 \re  \Vert_{L^2_x} \leq C(\bar{\rho}_\ep, \lrho_\ep) \Vert  V_\ep \Vert_{H^1_x} \Vert \px^3 \re  \Vert_{L^2_x} ;
\end{align*}

Now, going back to \eqref{rho-m-3} and plugging the above estimate in it, we obtain
\begin{align*}
		\frac{1}{2}\frac{d}{dt} \Vert \partial_x^3 \rho_\ep \Vert_{L^2_x}^2 & \leq C(\bar{\rho}_\ep, \lrho_\ep) \Vert \px^3 V_\ep \Vert_{L^2_x} \Vert \px^3 \re \Vert_{L^2_x}  \\
		&+ C(\bar{\rho}_\ep, \lrho_\ep) \left(  \Vert  V_\ep \Vert_{H^1_x} +\Vert \px^3 \ue \Vert_{L^2_x}   \right) \Vert \px^3 \re \Vert_{L^2_x}^2 \\
		&+ C(\bar{\rho}_\ep, \lrho_\ep) \left( \Vert  \re \Vert_{H^2_x} \Vert \px^3 \re \Vert_{L^2_x} \Vert \px^2 V_\ep \Vert_{L^2_x} \right) \\
		&+ C(\bar{\rho}_\ep, \lrho_\ep) \left( \Vert  V_\ep \Vert_{H^1_x} \left(\Vert  \re \Vert_{H^2_x} +\Vert  \re \Vert_{H^2_x}^2 + \Vert  \re \Vert_{H^2_x}^3\right)   \right)\Vert \px^3 \re \Vert_{L^2_x}.
\end{align*}
Now we add the above inequality with \eqref{V-l-2a} and use the following inequality 
\begin{align*}
	C(\bar{\rho}_\ep, \lrho_\ep) \Vert \px^3 V_\ep \Vert_{L^2_x} \Vert \px^3 \re \Vert_{L^2_x} \leq \frac{\ep}{8} \Vert \px^3 V_\ep \Vert_{L^2_x} ^2 + 	C(\ep,\bar{\rho}_\ep, \lrho_\ep)
	 \Vert \px^3 \re \Vert_{L^2_x}^2
\end{align*}
to deduce 
\begin{align}\label{2final}
	&\frac{1}{2}\frac{d}{dt} \Vert \partial_x^3 \rho_\ep \Vert_{L^2_x}^2+ \frac{1}{2}\frac{d}{dt}  \Vert \partial_x^2 V_\ep \Vert_{L^2_x}^2 + \frac{1}{2}\ep  \Vert  \px^3 V_\ep \Vert_{L^2_x}^2 \leq \tilde{F}_1(t) \Vert \px^3 \re \Vert_{L^2_x}^2 + \tilde{F}_2(t)\Vert \px^2 V_\ep \Vert_{L^2_x}^2+ \tilde{G}(t)
\end{align}
where
\begin{align*}
\tilde{F}_1(t)= C(\ep, \bar{\rho}_\ep, \lrho_\ep) \left(  \Vert  V_\ep \Vert_{H^1_x} +\Vert \px^3 \ue \Vert_{L^2_x}  + \Vert \px \ue \Vert_{L^2_x} + \Vert \partial_x^2 V_\ep \Vert_{L^2_x}^2 + \Vert  V_\ep \Vert_{H^1_x}^2 + \Vert \re \Vert_{H^1_x}^2+1 \right) ,
\end{align*}
\begin{align*}
	\tilde{F}_2(t)= C(\ep, \bar{\rho}_\ep, \lrho_\ep)\left( \left(\lrho_\ep^{-1/2} \ep^{-1} \|\sqrt{\rho_\ep} u_\ep\|_{L^2_x}^2 + \ep^{-3} \lrho_\ep^{-2\alpha} \|V_\ep\|_{L^2_x}^2 \right) +  \Vert \px \ue \Vert_{L^2_x}^2 + \Vert \re \Vert_{H^2_x}^2 \right)
\end{align*}
and
\begin{align*}
	\tilde{G}(t)= C(\ep, \bar{\rho}_\ep, \lrho_\ep)\left( \Vert \px^2 \ue \Vert_{L^2_x}^2 +  \Vert \px^3 \ue \Vert_{L^2_x}^2 + \left( \sum_{k=1}^{6} \Vert \re \Vert_{H^2_x}^k \right) \left(  \sum_{k=1}^{4} \Vert  V_\ep \Vert_{H^1_x}^k  \right)\right). 
\end{align*}
We add a few additional terms in the right hand side to write it in this general form.
We note that, in interval $ (0,T) $
\begin{align*}
	\Vert \tilde{F}_1 \Vert_{L^1_t}+	\Vert \tilde{F}_2 \Vert_{L^1_t}+	\Vert \tilde{G} \Vert_{L^1_t} \leq C(\ep,\bar{\rho}_\ep, \lrho_\ep,E_2,T). 
\end{align*}
Now, we introduce an additional hypothesis 
\[  \Vert \px^3 \rho_{0,\ep} \Vert_{L^2_x} +\Vert \partial_x^2 V_{0,\ep} \Vert_{L^2_x} < \infty .\] 
Again we use Gr\"onwall's inequality to conclude
\begin{align*}
	&\Vert \px^3 \re \Vert_{L^\infty L^2}+  \| \px^2 V_\ep\|_{L^\infty L^2}^2 + \ep \|\partial_x^3 V_\ep\|_{L^2 L^2}^2 \\
	& \leq C(\ep, {E_{3}}, \|\partial^k_x u_{0,\ep}\|_{L^2}, \|\partial^k_x \rho_{0,\ep}\|_{L^2}, \lrho_\ep, T).,
\end{align*}
where 
\[E_{3}= E_2+ \Vert \px^3 \rho_{0,\ep} \Vert_{L^2_x} +\Vert \partial_x^2 V_{0,\ep} \Vert_{L^2_x} . \]
We proceed analogously as in the proof of Lemma \ref{lem-reg-m2} to obtain the $ L^\infty L^2 $ estimate of $ \px^3 \ue $ and the $ L^2 L^2 $ estimate of $ \px^4 \ue $.
\end{proof}

Next we will state and prove a generalized Poincar\'e inequality: 
\begin{prop}\label{GP}
There exists a positive constant $ C $ such that the following inequality holds 
\begin{align}\label{GP-ineq}
    \Vert u \Vert_{L^1_x} \leq C\lr{\Vert \px u \Vert_{L^1_x} + \int_{\mathbb{T}} r \vert u \vert dx },
\end{align}
for any $u \in W^{1,1}(\T)$ and any non-negative function $r$ such that
\begin{align}\label{GP-hyp1}
    0<M_0 \leq \int_{\mathbb{T}} r \ dx < \infty ,\;  \quad r \in L^\infty_x(\T).
\end{align}
\end{prop}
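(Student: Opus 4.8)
The plan is to prove \eqref{GP-ineq} by a direct argument combining the one-dimensional fundamental theorem of calculus with the lower mass bound in \eqref{GP-hyp1}. First I would pass to the absolutely continuous representative of $u$ (legitimate since $W^{1,1}(\T)\hookrightarrow \mathcal{C}(\T)$ in dimension one), so that for all $x,y\in\T$, integrating $\partial_x u$ along a path from $y$ to $x$ inside $\T$,
\[
|u(x)|\leq |u(y)| + \|\partial_x u\|_{L^1_x}.
\]
In particular $u\in L^\infty(\T)$, hence $r\,|u|\in L^1(\T)$ (only $r\in L^1(\T)$, which follows from $\int_\T r<\infty$ and $r\geq0$, is actually needed), so both sides of \eqref{GP-ineq} are finite.

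Next, I would set $m:=\min_{\T}|u|\geq0$, attained at some $\bar y\in\T$ by continuity of $u$ on the compact set $\T$. Since $|u(y)|\geq m$ and $r(y)\geq0$ for all $y$, and using the lower bound in \eqref{GP-hyp1},
\[
\int_\T r\,|u|\,dy \geq m\int_\T r\,dy \geq M_0\,m ,
\]
so that $m\leq M_0^{-1}\int_\T r\,|u|\,dy$. Substituting $y=\bar y$ in the first inequality then gives, for every $x\in\T$,
\[
|u(x)| \leq m + \|\partial_x u\|_{L^1_x} \leq \frac{1}{M_0}\int_\T r\,|u|\,dy + \|\partial_x u\|_{L^1_x}.
\]

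Finally I would integrate this pointwise bound over $x\in\T$ to obtain
\[
\|u\|_{L^1_x} \leq \frac{|\T|}{M_0}\int_\T r\,|u|\,dx + |\T|\,\|\partial_x u\|_{L^1_x},
\]
which is \eqref{GP-ineq} with $C=|\T|\,(1+M_0^{-1})$, a constant depending only on $|\T|$ and $M_0$. I do not expect any real obstacle here; the only points requiring a little care are working with the absolutely continuous representative (so that the fundamental theorem of calculus and the pointwise inequalities are valid) and checking the integrability of $r\,|u|$, both handled by the embedding $W^{1,1}(\T)\hookrightarrow L^\infty(\T)$. Note in passing that the hypothesis $r\in L^\infty_x(\T)$ is not actually used in this argument.
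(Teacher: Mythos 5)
Your proof is correct, and it takes a genuinely different route from the paper's. The paper argues by contradiction and compactness: it assumes the inequality fails along sequences $(u_n,r_n)$ normalized by $\|u_n\|_{L^1_x}=1$, uses the compact embedding $W^{1,1}(\T)\hookrightarrow\hookrightarrow L^1(\T)$ to extract a limit $u$ which is constant and nonzero, and the weak-$*$ compactness of $(r_n)$ in $L^\infty$ to pass to the limit in $\int_\T r_n|u_n|$, reaching the contradiction $\int_\T r=0$. Your argument instead is direct and quantitative: the one-dimensional embedding $W^{1,1}(\T)\hookrightarrow \mathcal C(\T)$ gives $|u(x)|\le |u(y)|+\|\px u\|_{L^1_x}$, taking $y$ at the minimum of $|u|$ together with $\int_\T r|u|\ge m\int_\T r\ge M_0\,m$ bounds that minimum, and integrating in $x$ yields the inequality with the explicit constant $C=|\T|\,(1+M_0^{-1})$. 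This buys two things the paper's proof does not provide: an effective constant, and the observation that the hypothesis $r\in L^\infty(\T)$ is superfluous (only $r\in L^1$, $r\ge 0$ is used), whereas the paper genuinely relies on $r_n$ being bounded in $L^\infty$ to extract a weak-$*$ limit. Both proofs are valid; yours is the more elementary and the more informative.
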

\begin{proof}
We prove the statement by methods of contradiction. Suppose \eqref{GP-ineq} is not true, then there exists a sequence $ \{u_n\}_{n\in \mathbb{N}}$ and $ \{r_n\}_{n\in \mathbb{N}}$ such that
\begin{align*}
    \Vert u_n \Vert_{L^1_x}=1,\; \quad \Vert \px u_n \Vert_{L^1_x} + \int_{\mathbb{T}} r_n \vert u_n \vert \ dx \leq \frac{1}{n}
\end{align*}
and 
\begin{align*}
    r_n \rightarrow r \text{ weakly-* in } L^\infty_x.
\end{align*}
Therefore, we have 
\begin{align*}
    \Vert u_n \Vert_{W^{1,1}_x} \leq 2.
\end{align*}
As a consequence of compact embedding of $ W^{1,1}_x$ in $ L^1_x $, we obtain 
\begin{align*}
    u_n \rightarrow u \text{ strongly in } L^1_x.
\end{align*}
Next, the bound $\Vert \px u_n \Vert_{L^1_x} \leq \frac{1}{n} $ yields 
\begin{align*}
     \px u_n \rightarrow 0 \text{ strongly in } L^1_x.
\end{align*}
The above two statements imply
\begin{align*}
    u_n \rightarrow u \text{ strongly in } W^{1,1}_x \text{ and } \px u =0 \text{ a.e.}.
\end{align*}
Now, note that the weak-* convergence of $ r_n $ in $L^\infty_x$ and strong convergence of $ u_n $ in $ L^1_x $ helps us to deduce
\begin{align*}
    \int_{\T} r\ dx = 0, 
\end{align*}
that contradicts the hypothesis \eqref{GP-hyp1}.
\end{proof}

\newpage
\section*{Acknowledgments}

C. P. is supported by the SingFlows and CRISIS projects, grants ANR-18-CE40-0027 and ANR-20-CE40-0020-01 of the French National Research Agency (ANR). The work of N.C. and E.Z. is supported by the EPSRC Early Career Fellowship no. EP/V000586/1.

\bigskip
\bibliography{biblio}

\end{document}